\newtheorem{prop}{Proposition}[section]
\newtheorem{rmq}{Remark}[section]  
\newtheorem{theo}{Theorem}[section]  
\newtheorem{lem}{Lemma}[section]
\newtheorem{cor}{Corollary}[section]
\begin{document}
\title{Online estimation of the asymptotic variance for averaged stochastic gradient algorithms}
\author{Antoine Godichon-Baggioni \\ Institut de Math\'ematiques de Toulouse, \\
Université Paul Sabatier, 31000 Toulouse, France \\
email: godichon@insa-toulouse.fr
} 
\maketitle

\begin{abstract}
Stochastic gradient algorithms are more and more studied since they can deal efficiently and online with large samples in high dimensional spaces. In this paper, we first establish a Central Limit Theorem for these estimates as well as for their averaged version in general Hilbert spaces. Moreover, since having the asymptotic normality of estimates is often unusable without an estimation of the asymptotic variance, we introduce a new recursive algorithm for estimating this last one, and we establish its almost sure rate of convergence as well as its rate of convergence in quadratic mean. Finally, two examples consisting in estimating the parameters of the logistic regression and estimating geometric quantiles are given.
\end{abstract}

\textbf{Keywords:} Stochastic Gradient Algorithm, Averaging, Central Limit Theorem, Asymptotic Variance.

\section{Introduction}

High Dimensional and Functional Data Analysis are interesting domains which do not have stopped growing for many years. To consider these kinds of data, it is more and more important to think about methods which take into account the high dimension as well as the possibility of having large samples. In this paper, we focus on an usual stochastic optimization problem which consists in estimating
\[
m:= \arg \min_{h\in H} \mathbb{E}\left[ g \left( X , h \right) \right] ,
\]
where $X$ is a random variable taking values in a space $\mathcal{X}$ and $g:\mathcal{X}\times H \longrightarrow  \mathbb{R}$, where $H$ is a separable Hilbert space. In order to build an estimator of $m$, an usual method was to consider the solver of the problem generated by the sample, i.e to consider $M$-estimates (see \cite{HubR2009} and \cite{MR2238141} among others). In order to build these estimates, deterministic convex optimization algorithms (see \cite{boyd2004convex}) are often used (see \cite{VZ00}, \cite{oja1985asymptotic} in the case of the median), and these methods are really efficient in small dimensional spaces. 

\medskip

Nevertheless, in a context of high dimensional spaces, this kind of method can encounter many computational problems. The main ones are that it needs to store all the data, which can be expensive in term of memory and that they cannot deal online with the data. In order to overcome this, stochastic gradient algorithms (\cite{robbins1951}) are efficient candidates since they do not need to store the data into memory, and they can be easily updated, which is crucial if the data arrive sequentially (see \cite{dufloalgo}, \cite{Duf97}, \cite{Kushner2003} or \cite{nemirovski2009robust} among others). In order to improve the convergence, \cite{ruppert1988efficient} and \cite{PolyakJud92} introduced its averaged version (see also \cite{dippon1997weighted} for a weighted version). These algorithms have become crucial to statistics and modern machine learning (\cite{bach2013non}, \cite{bach2014adaptivity}, \cite{juditsky2014deterministic}). There are already many results on these algorithms in the literature, that we can split into two parts: asymptotic results, such as almost sure rates of convergence \citep{schwabe1996stochastic,Duf97,walk1992foundations,pelletier1998almost,Pel00}, 
 and non asymptotic ones, such as rates of convergence in quadratic mean \citep{CCG2015,godichon2015,bach2013non,bach2014adaptivity,nemirovski2009robust}. 

\medskip

In a recent work, \cite{godichon2016} introduces a new framework, with only locally strongly convexity assumptions, in general Hilbert spaces, which allows to obtain almost sure and $L^{p}$ rates of convergence. In keeping with it, and in order to have a deeper study of the stochastic gradient algorithm as well as of its averaged version (up to a new assumption), we first give the asymptotic normality of the estimates. In a second time, since a Central Limit Theorem is often unusable without an estimation of the variance, we introduce a recursive algorithm, inspired by \cite{gahbiche2000estimation}, to estimate the asymptotic variance of the averaged estimator and we establish its rates of convergence. As far as we know, there was not yet an efficient and recursive estimate of the asymptotic variance in the literature. Finally, two  examples of application are given. The first usual one consists in estimating the parameters of the logistic regression \citep{bach2014adaptivity} while the second one consists in estimating geometric quantiles (see \cite{Cha96} and \cite{ChaCha2014}), which are useful robust indicators in statistics. Indeed, they are often used in data depth and outliers detection (\cite{serfling2006depth}, \cite{hallin2006semiparametrically}), as well as for robust estimation of the mean and variance (see \cite{minsker2014robust}), or for Robust Principal Component Analysis (\cite{Ger08}, \cite{KrausPanaretos2012}, \cite{CG2015}).

\medskip

The paper is organized as follows: Section \ref{sectionassumption} recalls the framework introduced by \cite{godichon2016} before giving two new assumptions which allow to get the rate of convergence of the estimators of the asymptotic variance. In section \ref{sectiontlc}, the stochastic gradient algorithm as well as its averaged version are introduced and their asymptotic normality are given. The recursive estimator of the asymptotic variance is given in Section \ref{sectionvariance} and its almost sure as well as its quadratic mean rates of convergence are established. Applications, consisting in estimating the logistic regression parameters and in the recursive estimation of geometric quantiles, are given  in Section \ref{sectionapplication} as well as a short simulation study. Finally, the proofs are postponed in Section \ref{sectionproof} and in Appendix.

\section{Assumptions}\label{sectionassumption}
Let $H$ be a separable Hilbert space such as $\mathbb{R}^{d}$ or $L^{2}(I)$ (for some closed interval $I \subset \mathbb{R}$), we denote by $\left\langle .,. \right\rangle$ its inner product and by $\left\| . \right\|$ the associated norm. Let $X$ be a random variable taking values in a space $\mathcal{X}$, and let $G: H \longrightarrow \mathbb{R}$ be the function we would like to minimize, defined for all $h \in H$ by
\begin{equation}
G (h) := \mathbb{E}\left[ g(X,h ) \right],
\end{equation}
where $g: \mathcal{X} \times H \longrightarrow \mathbb{R}$. Moreover, let us suppose that the functional $G$ is convex. Finally, let us introduce the space of linear operators on $H$, denoted by $\mathcal{S}(H)$, equipped with the Frobenius (or Hilbert-Schmidt) inner product, which is defined by
\[
\left\langle A,B \right\rangle_{F} := \sum_{j \in J} \left\langle A (e_{j} ) , B ( e_{j} ) \right\rangle , \quad \forall A,B \in \mathcal{S}(H) ,
\]
where $\left( e_{j} \right)_{j \in J}$ is an orthonormal basis of $H$. We denote by $\left\| . \right\|_{F}$ the associated norm, and $\mathcal{S}(H)$ is then a separable Hilbert space. Let us recall the framework introduced by \cite{godichon2016}:
\begin{itemize}
\item[\textbf{(A1)}] The functional $g$ is Frechet-differentiable for the second variable almost everywhere. Moreover, $G$ is differentiable and there exists $m \in H$ such that
\[
 \nabla G (m) = 0 .
\]
\item[\textbf{(A2)}] The functional $G$ is twice continuously differentiable almost everywhere and for all positive constant $A$, there is a positive constant $C_{A}$ such that for all $h \in \mathcal{B}\left( m , A \right)$,
\[
\left\| \Gamma_{h} \right\|_{op} \leq C_{A} ,
\]
where $\Gamma_{h}$ is the Hessian of the functional $G$ at $h$ and $ \left\| . \right\|_{op}$ is the usual spectral norm for linear operators.
\item[\textbf{(A3)}] There exists a positive constant $\epsilon$ such that for all $h \in \mathcal{B}\left( m , \epsilon \right)$, there is an orthonormal basis of $H$ composed of eigenvectors of $\Gamma_{h}$. Moreover,  let us denote by $\lambda_{\min}$ the limit inf of the eigenvalues of $\Gamma_{m}$, then $\lambda_{\min}$ is positive. Finally, for all $h \in \mathcal{B}\left( m , \epsilon \right)$, and for all eigenvalue $\lambda_{h}$ of $\Gamma_{h}$, we have $\lambda_{h} \geq \frac{\lambda_{\min}}{2} > 0$.
\item[\textbf{(A4)}] There are positive constants $\epsilon, C_{\epsilon}$ such that for all $h \in \mathcal{B}\left( m , \epsilon \right)$,
\[
\left\| \nabla G (h) - \Gamma_{m}(h-m) \right\| \leq C_{\epsilon} \left\| h-m \right\|^{2}.
\]
\item[\textbf{(A5)}] 
\begin{itemize}
\item[\textbf{(a)}] There is a positive constant $L_{1}$ such that for all $h \in H$,
\[
\mathbb{E}\left[ \left\| \nabla_{h}g \left( X,h \right) \right\|^{2} \right] \leq L_{1} \left( 1 + \left\| h-m \right\|^{2} \right) . 
\]
\item[\textbf{(a')}] There is a positive constant $L_{2}$ such that for all $h \in H$,
\[
\mathbb{E}\left[ \left\| \nabla_{h}g \left( X,h \right) \right\|^{4} \right] \leq L_{2} \left( 1 + \left\| h-m \right\|^{4} \right) . 
\]
\item[\textbf{(b)}] For all integer $q$, there is a positive constant $L_{q}$ such that for all $h \in H$,
\[
\mathbb{E}\left[ \left\| \nabla_{h}g \left( X,h \right) \right\|^{2q} \right] \leq L_{q} \left( 1 + \left\| h-m \right\|^{2q} \right) . 
\]
\end{itemize}
\end{itemize}
Let us now make some comments on assumptions. First, Assumption \textbf{(A1)} ensures the existence of a solution and enables to use a stochastic gradient descent, while \textbf{(A2)} gives some smoothness properties on the objective function. Assumption \textbf{(A3)} ensures the uniqueness of the minimizer of $G$, and \textbf{(A4)},\textbf{(A5)} give bounds of the gradient and of the remainder term of its Taylor's expansion. The main difference between this framework and the usual one for strongly convex objective is that we just assume the local strong convexity of the objective function, and in return, $p$-th moments of the gradient of the functional $g$ have to be bounded. Note also that the Hessian of the functional $G$ is not supposed to be compact, so that its smallest eigenvalue does not necessarily converge to $0$ when the dimension tends to infinity (a counter example is given in Section \ref{sectionapplication}).  Remark that assumptions \textbf{(A1)} to \textbf{(A5b)} are deeply discussed in \cite{godichon2016}. Let us now introduce two new assumptions.
\begin{itemize}
\item[\textbf{(A6)}] Let $\varphi : H \longrightarrow \mathcal{S}(H)$ be the functional defined for all $h \in H$ by
\[
\varphi \left( h \right) := \mathbb{E}\left[ \nabla_{h} g \left( X , h \right) \otimes \nabla_{h}g \left( X,h \right) \right] .
\]
\begin{itemize}
\item[\textbf{(a)}] The functional $\varphi$ is continuous at $m$ with respect to the Frobenius norm:
\[
\lim_{h \to m} \left\| \mathbb{E}\left[ \nabla_{h} g \left( X , m \right) \otimes \nabla_{h}g \left( X,m \right) \right] - \mathbb{E}\left[ \nabla_{h} g \left( X , h \right) \otimes \nabla_{h}g \left( X,h \right) \right] \right\|_{F} = 0 .
\]
\item[\textbf{(b)}] The functional $\varphi$ is locally lipschitz on a neighborhood of $m$: there are positive constants $\epsilon, C_{\epsilon}'$, such that for all $h \in \mathcal{B}\left( m , \epsilon \right) $,
\[
\left\| \mathbb{E}\left[ \nabla_{h} g \left( X , m \right)\otimes \nabla_{h} g \left( X , m \right) - \nabla_{h} g \left( X, h \right) \otimes \nabla_{h} g \left( X , h \right) \right] \right\|_{F} \leq C_{\epsilon}' \left\| h-m \right\| .
\]
\end{itemize}
\end{itemize}
Assumption \textbf{(A6a)} enables to establish the asymptotic normality of the stochastic gradient descent as well as of its averaged version. Note that under \textbf{(A5a)}, the functional $\varphi$ is bounded, and more precisely
\[
\left\| \mathbb{E}\left[ \nabla_{h}g \left( X,h \right)\otimes \nabla_{h}g \left( X,h \right) \right] \right\|_{F} \leq \mathbb{E}\left[ \left\| \nabla_{h} g \left( X,h \right) \right\|^2 \right] \leq L_{1} \left( 1+ \left\| h-m \right\|^{2} \right) . 
\]  
Assumption \textbf{(A6b)} can be verified by giving a bound, on a neighborhood of $m$, of the derivative of the functional $\varphi$. This last assumption allows to give the rate of convergence of the estimators of the asymptotic variance. An example is given for the special case of the geometric median in Appendix.
\begin{rmq}
For all $h\in H$ and $A>0$,
\[
\mathcal{B}\left( h , A \right) = \left\lbrace h' \in H, \quad \left\| h-h' \right\| < A \right\rbrace .
\]
\end{rmq}

\begin{rmq}
Let $h,h' \in H$, the linear operator $h \otimes h' : H \longrightarrow H $ is defined for all $h'' \in H$ by $h \otimes h' (h'') := \left\langle h,h'' \right\rangle h'$. Moreover,
\begin{equation}\label{normf}
\left\| h \otimes h' \right\|_{F} = \left\| h \right\| \left\| h' \right\| .
\end{equation}
\end{rmq}

\section{The stochastic gradient algorithm and its averaged version}\label{sectiontlc}
\subsection{The Robbins-Monro algorithm}
In what follows, let $X_{1},...,X_{n}$ be independent random variables with the same law as $X$. The stochastic gradient algorithm is defined recursively for all $n \geq 1$ by
\begin{equation}
\label{defirm} m_{n+1} = m_{n} - \gamma_{n} \nabla_{h}g \left( X_{n+1} , m_{n} \right),
\end{equation}
with $m_{1}$ bounded and $\left( \gamma_{n} \right)$ is a step sequence of the form $\gamma_{n} := c_{\gamma} n^{-\alpha}$, with $c_{\gamma}>0$ and $\alpha \in \left( \frac{1}{2}, 1 \right)$. Moreover, let $\left( \mathcal{F}_{n} \right)_{n \geq 1}$ be the sequence of $\sigma$-algebras defined for all $n \geq 1$ by $\mathcal{F}_{n} := \sigma \left( X_{1},...,X_{n} \right)$. Then, the algorithm can be considered as a noisy (or stochastic) gradient algorithm since it can be written as
\begin{equation}
\label{decphi} m_{n+1} = m_{n} - \gamma_{n} \Phi \left( m_{n} \right) + \gamma_{n}\xi_{n+1},
\end{equation} 
where $\Phi \left( m_{n} \right) := \nabla G \left( m_{n} \right)$, and $\left( \xi_{n} \right)$, defined for all $n \geq 1$ by $\xi_{n+1} := \Phi \left( m_{n} \right) - \nabla_{h} g \left( X_{n+1} , m_{n} \right)$, is a martingale differences sequence adapted to the filtration $\left( \mathcal{F}_{n} \right)$.  Finally, note that under assumptions \textbf{(A1)} to \textbf{(A5a)}, it was proven in \cite{godichon2016} that for all positive constant $\delta$,
\begin{equation}
\label{vitesseasrm} \left\| m_{n} - m \right\|^{2} = o \left( \frac{\left( \ln n \right)^{\delta}}{n^{\alpha}}\right) \quad a.s.
\end{equation}
Moreover, assuming that \textbf{(A5b)} is also fulfilled, for all positive integer $p$, there is a constant $C_{p}$ such that for all $n \geq 1$,
\begin{equation}
\label{vitlprm} \mathbb{E}\left[ \left\| m_{n} - m \right\|^{2p} \right] \leq \frac{C_{p}}{n^{p\alpha}}.
\end{equation}
In order to get a deeper study of this estimate, we now give its asymptotic normality.
\begin{theo}\label{tlcgrad}
Suppose assumptions \textbf{(A1)} to \textbf{(A5a')} and \textbf{(A6a)} hold. Then, we have the convergence in law
\[
\lim_{n \to \infty} \frac{1}{\sqrt{\gamma_{n}}}\left( m_{n} - m \right) \sim \mathcal{N} \left( 0 , \Sigma_{RM} \right),
\]
with
\begin{align*}
& \Sigma_{RM}  := \int_{0}^{+ \infty}e^{-s\Gamma_{m}} \Sigma ' e^{-s\Gamma_{m}} ds , \quad \quad \text{and} \quad \quad  \Sigma '  := \mathbb{E}\left[ \nabla_{h} g \left( X , m \right) \otimes \nabla_{h}g \left( X,m \right) \right] .
\end{align*}
\end{theo}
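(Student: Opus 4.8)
The plan is to cast the recursion \eqref{decphi} in a form amenable to a martingale central limit theorem, following the classical Robbins--Monro CLT machinery adapted to the Hilbert space setting. First I would linearize the algorithm around $m$: writing $\Delta_n := m_n - m$, the decomposition \eqref{decphi} together with Assumption \textbf{(A4)} gives
\[
\Delta_{n+1} = \left( I - \gamma_n \Gamma_m \right) \Delta_n + \gamma_n \xi_{n+1} + \gamma_n r_n,
\]
where $r_n := \Gamma_m \Delta_n - \nabla G(m_n)$ satisfies $\nrm{r_n} \leq C_\epsilon \nrm{\Delta_n}^2$ on a neighborhood of $m$. Iterating this relation, one obtains $\Delta_{n+1}$ as a sum of a ``main'' martingale term $\sum_{k} \left( \prod_{j>k} (I - \gamma_j \Gamma_m) \right) \gamma_k \xi_{k+1}$, a transient term coming from $\Delta_1$, and a remainder term driven by the $r_k$'s. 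The transient term decays faster than any polynomial rate thanks to the product of contractions (since $\alpha < 1$, $\sum \gamma_j = \infty$), and the remainder term is controlled using \eqref{vitlprm}: $\E[\nrm{r_k}^2] = O(k^{-2\alpha})$, which after the weighting is negligible compared to $\sqrt{\gamma_n}$ because $\alpha > 1/2$. This is the standard ``the noise dominates the bias'' phenomenon for slow steps.

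Next I would normalize by $1/\sqrt{\gamma_n}$ and apply a central limit theorem for martingale difference arrays (e.g. the Hilbert-space version of the Lindeberg--Feller CLT) to the normalized main term $\gamma_n^{-1/2} \sum_{k=1}^{n} A_{n,k} \gamma_k \xi_{k+1}$, where $A_{n,k} := \prod_{j=k+1}^{n}(I - \gamma_j \Gamma_m)$. The two hypotheses to verify are: (i) the conditional covariance converges, i.e. $\gamma_n^{-1} \sum_k A_{n,k} \gamma_k^2 \, \E[\xi_{k+1} \otimes \xi_{k+1} \mid \mathcal{F}_k] \, A_{n,k} \to \Sigma_{RM}$ in probability (in Frobenius norm), and (ii) a Lindeberg / conditional Lyapunov condition. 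For (i), note that $\E[\xi_{k+1} \otimes \xi_{k+1} \mid \mathcal{F}_k] = \varphi(m_k) - \Phi(m_k) \otimes \Phi(m_k)$, which converges to $\varphi(m) = \Sigma'$ by Assumption \textbf{(A6a)} and the a.s.\ convergence $m_k \to m$ from \eqref{vitesseasrm} (the rank-one correction $\Phi(m_k)\otimes\Phi(m_k) \to 0$ since $\nabla G(m)=0$ and $G$ is $C^2$). Then a Toeplitz-type / Riemann-sum argument, using $\gamma_n^{-1}\sum_{k\le n} \gamma_k^2 \, e^{-2(t_n-t_k)\Gamma_m} \to \int_0^\infty e^{-2s\Gamma_m}\,ds$ with $t_n := \sum_{j\le n}\gamma_j$, converts the discrete sum into the integral $\int_0^\infty e^{-s\Gamma_m}\Sigma' e^{-s\Gamma_m}\,ds$. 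This is where Assumption \textbf{(A3)} is used: the spectral gap $\lambda_{\min} > 0$ makes $e^{-s\Gamma_m}$ genuinely contractive and the integral convergent even in infinite dimension. For (ii), the fourth-moment bound \textbf{(A5a')} together with $\E[\nrm{\Delta_k}^4] = O(k^{-2\alpha})$ gives a uniform $L^2$ bound on $\E[\nrm{\xi_{k+1}}^2 \mid \mathcal{F}_k]$ and enough integrability to verify a conditional Lyapunov condition of order $2+\delta$, which implies Lindeberg; the normalizing weights $\gamma_n^{-1/2}\gamma_k = O(\gamma_k^{1/2})$ are uniformly small, so the triangular-array structure is genuine.

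The main obstacle I anticipate is handling the infinite-dimensionality carefully in the martingale CLT: one must work in the separable Hilbert space $H$ (or, for the covariance statement, in $\mathcal{S}(H)$ with the Frobenius norm), which requires checking tightness of the array — equivalently, that the limiting covariance operator $\Sigma_{RM}$ is trace-class, or at least that the tails $\sum_{k} \E\nrm{A_{n,k}\gamma_k\xi_{k+1}}^2$ are uniformly controlled so that finite-dimensional projections plus a uniform tail bound give the full CLT. Concretely I would project onto the eigenbasis of $\Gamma_m$ provided by Assumption \textbf{(A3)}, prove the CLT in each finite-dimensional subspace by the classical argument, then show the contribution of high-frequency components is uniformly negligible using $\E\nrm{\xi_{k+1}}^2 \le L_1(1+\E\nrm{\Delta_k}^2)$ and the exponential decay of $e^{-s\Gamma_m}$ on the spectrum bounded below by $\lambda_{\min}/2$. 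Once these ingredients are assembled, Slutsky's lemma combines the negligible remainder and transient terms with the asymptotically normal main term to conclude $\gamma_n^{-1/2}(m_n - m) \xrightarrow{\CL} \mathcal{N}(0,\Sigma_{RM})$.
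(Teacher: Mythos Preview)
Your proposal is correct and follows essentially the same route as the paper: the same linearized decomposition into transient, remainder, and martingale parts; the same negligibility arguments for the first two via the product of contractions and the $L^p$ rates \eqref{vitlprm}; and the same verification of a Hilbert-space martingale CLT for the main term, with the conditional-covariance convergence handled via \textbf{(A6a)} plus a Toeplitz/Riemann-sum argument and the infinite-dimensional tightness handled by projecting onto the eigenbasis of $\Gamma_m$. The only cosmetic difference is that the paper invokes Jakubowski's Theorem~5.1 explicitly and packages the hypotheses as a sup-increment condition, a coordinatewise quadratic-variation limit, and a tail-tightness condition, whereas you phrase them as Lyapunov plus covariance convergence plus tightness; these are the same three ingredients.
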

\noindent The proof is given in Appendix. Note that the variance $\Sigma_{RM}$ does not depend on the step sequence $\left( \gamma_{n} \right)$, but Theorem \ref{tlcgrad} could be written as
\[
\lim_{n \to \infty} n^{\alpha /2} \left( m_{n} - m \right) \sim \mathcal{N} \left( 0 , c_{\gamma}\Sigma_{RM} \right),
\]
\begin{rmq}
Let $M$ be a squared matrix, $e^{M}$ is defined by (see \cite{horn2012matrix} among others)
\[
e^{M} = \sum_{k=0}^{\infty} \frac{1}{k!}M^{k}.
\]
Thanks to assumptions \textbf{(A2)},\textbf{(A3)}, $0 < \lambda_{\min}\left( \Gamma_{m} \right) \leq \lambda_{\max} \left( \Gamma_{m} \right) < \infty$, while under \textbf{(A5a)} and by dominated convergence, 
\begin{align*}
\left\| \Sigma_{RM} \right\|_{F} & \leq \int_{0}^{+\infty}\left\| e^{-s\Gamma_{m}} \right\|_{op}^{2}\left\| \Sigma ' \right\|_{F}ds \leq \int_{0}^{+\infty} e^{-2s\lambda_{\min}}\left\| \Sigma ' \right\|_{F}ds   \leq \frac{L_{1}}{2\lambda_{\min}},
\end{align*} 
and $\Sigma_{RM}$ is so well defined.
\end{rmq}
\begin{rmq}
Note that analogous results are given by \citep{fabian1968asymptotic,pelletier1998almost} in the particular case of finite dimensional spaces while, for analogous results in Banach and Hilbert spaces, one can also see \cite{walk1992foundations}, \cite{ljung2012stochastic}, \cite{KY03}. 
\end{rmq}
\begin{rmq}\label{remlyap}
Note that taking a step sequence of the form $\gamma_{n} = \frac{c}{n}$ with $c > \frac{2}{\lambda_{\min}}$ is possible, and one can obtain the following asymptotic normality (see \cite{Pel00} among others for the case of finite dimensional spaces)
\[
\lim_{n \to \infty} \sqrt{n} \left( m_{n} - m \right) \sim \mathcal{N} \left( 0 , c\Sigma ' \right) .
\]
Nevertheless, it does not only necessitate to have some information on the Hessian $\Gamma_{m}$, but $c\Sigma '$ is also not the optimal variance (see \cite{Duf97} and \cite{Pel00} for instance).
\end{rmq}

\subsection{The averaged algorithm}
As mentioned in Remark \ref{remlyap}, having the parametric rate  of convergence ($O\left(\frac{1}{n}\right)$) with the Robbins-Monro algorithm is possible taking a good choice of step sequence $\left( \gamma_{n} \right)$. Nevertheless, this choice is often complicated and the asymptotic variance which is obtained is not optimal. Then, in order to improve the convergence, let us now introduce the averaged algorithm (see \cite{ruppert1988efficient} and \cite{PolyakJud92}) defined for all $n \geq 1$ by 
\[
\overline{m}_{n} = \frac{1}{n}\sum_{k=1}^{n} m_{k}.
\]
This can be written recursively for all $n \geq 1$ as
\begin{equation}
\overline{m}_{n+1} = \overline{m}_{n} + \frac{1}{n+1}\left( m_{n+1} - \overline{m}_{n} \right).
\end{equation}
It was proven in \cite{godichon2016} that under assumptions \textbf{(A1)} to \textbf{(A5a)}, for all $\delta > 0$,
\begin{equation}
\label{vitasmoy} \left\| \overline{m}_{n} - m \right\|^{2} = o \left( \frac{(\ln n)^{1+\delta}}{n}\right) \quad a.s.
\end{equation}
Suppose assumption \textbf{(A5b)} is also fulfilled, for all positive integer $p$, there is a positive constant $C_{p}'$ such that for all $n \geq 1$,
\begin{equation}\label{vitlpmoy}
\mathbb{E}\left[ \left\| \overline{m}_{n} - m \right\|^{2p} \right] \leq \frac{C_{p}'}{n^{p}}.
\end{equation}
Finally, in order to have a deeper study of this estimate, we now give its asymptotic normality.
\begin{theo}\label{theotlc}
Suppose assumptions \textbf{(A1)} to \textbf{(A5a')} and \textbf{(A6a)} are verified. Then, we have the convergence in law
\[
\lim_{n \to \infty} \sqrt{n} \left( \overline{m}_{n} - m \right) \sim \mathcal{N}\left( 0 , \Sigma \right),
\]
with $\Sigma := \Gamma_{m}^{-1}\Sigma ' \Gamma_{m}^{-1}$, and $\Sigma ' := \mathbb{E}\left[ \nabla_{h} g \left( X,m \right) \otimes \nabla_{h}g \left( X , m \right) \right]$.
\end{theo}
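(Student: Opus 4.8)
The plan is the classical Polyak--Ruppert argument, adapted to the present Hilbertian, locally strongly convex setting. Starting from \eqref{defirm} and using \textbf{(A4)}, set $\Delta_{k} := \nabla G(m_{k}) - \Gamma_{m}(m_{k}-m)$, so that $\| \Delta_{k}\| \le C_{\epsilon}\|m_{k}-m\|^{2}$ whenever $m_{k}\in\mathcal{B}(m,\epsilon)$, and \eqref{decphi} becomes $m_{k+1}-m = (I-\gamma_{k}\Gamma_{m})(m_{k}-m) - \gamma_{k}\Delta_{k} + \gamma_{k}\xi_{k+1}$. Hence
\[
\Gamma_{m}(m_{k}-m) = \frac{1}{\gamma_{k}}\bigl(m_{k}-m_{k+1}\bigr) - \Delta_{k} + \xi_{k+1}.
\]
By \textbf{(A2)}--\textbf{(A3)} one has $0<\lambda_{\min}\le\lambda_{\max}(\Gamma_{m})<\infty$, so $\Gamma_{m}$ is invertible with bounded inverse; averaging over $k=1,\dots,n$ and multiplying by $\sqrt{n}$ gives
\[
\sqrt{n}\,(\overline{m}_{n}-m) = \Gamma_{m}^{-1}\bigl( S_{n}^{(1)} + S_{n}^{(2)} - S_{n}^{(3)} \bigr),
\]
where $S_{n}^{(1)} := \tfrac{1}{\sqrt{n}}\sum_{k=1}^{n}\tfrac{1}{\gamma_{k}}(m_{k}-m_{k+1})$, $S_{n}^{(2)} := \tfrac{1}{\sqrt{n}}\sum_{k=1}^{n}\xi_{k+1}$, and $S_{n}^{(3)} := \tfrac{1}{\sqrt{n}}\sum_{k=1}^{n}\Delta_{k}$. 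It then suffices to prove that $S_{n}^{(1)}$ and $S_{n}^{(3)}$ vanish in probability while $S_{n}^{(2)}$ converges in law to $\mathcal{N}(0,\Sigma')$, since $\Gamma_{m}^{-1}$ is a fixed bounded self-adjoint operator and $\Gamma_{m}^{-1}\mathcal{N}(0,\Sigma') = \mathcal{N}(0,\Gamma_{m}^{-1}\Sigma'\Gamma_{m}^{-1}) = \mathcal{N}(0,\Sigma)$.

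For $S_{n}^{(1)}$ I would use an Abel summation, $\sum_{k=1}^{n}\tfrac{1}{\gamma_{k}}(m_{k}-m_{k+1}) = \tfrac{1}{\gamma_{1}}(m_{1}-m) - \tfrac{1}{\gamma_{n}}(m_{n+1}-m) + \sum_{k=2}^{n}\bigl(\tfrac{1}{\gamma_{k}}-\tfrac{1}{\gamma_{k-1}}\bigr)(m_{k}-m)$. Since $\gamma_{n}=c_{\gamma}n^{-\alpha}$, one has $\tfrac{1}{\gamma_{k}}-\tfrac{1}{\gamma_{k-1}} = O(k^{\alpha-1})$ and $\tfrac{1}{\gamma_{n}}=O(n^{\alpha})$; plugging the almost sure rate \eqref{vitesseasrm} into the boundary term and the bound \eqref{vitlprm} with $p=1$ (together with Cauchy--Schwarz) into the sum, every contribution is $O\bigl(n^{(\alpha-1)/2}(\ln n)^{\delta}\bigr)$, hence $o(1)$ because $\alpha<1$. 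For $S_{n}^{(3)}$, I would split each term according to whether $m_{k}\in\mathcal{B}(m,\epsilon)$ or not: on the good event, \textbf{(A4)} and \eqref{vitlprm} with $p=1$ give $\mathbb{E}\|S_{n}^{(3)}\|\le \tfrac{C}{\sqrt{n}}\sum_{k=1}^{n}k^{-\alpha} = O(n^{1/2-\alpha})\to 0$ because $\alpha>1/2$; on the complement, bounding $\|\Delta_{k}\|$ globally by $C(1+\|m_{k}-m\|)$ via \textbf{(A2)} and \textbf{(A5a)} and using \eqref{vitlprm} with $p$ large (so that $\mathbb{P}(\|m_{k}-m\|\ge\epsilon)$ decays faster than any polynomial) makes the remaining contribution negligible as well.

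The core of the proof is a martingale central limit theorem for $S_{n}^{(2)} = \tfrac{1}{\sqrt{n}}\sum_{k=1}^{n}\xi_{k+1}$ in the separable Hilbert space $H$, $(\xi_{n})$ being a martingale differences sequence adapted to $(\mathcal{F}_{n})$. The three ingredients to check are: (i) \emph{convergence of the conditional covariance operators} --- since the $X_{k}$ are i.i.d.\ and $\xi_{k+1}=\nabla G(m_{k})-\nabla_{h}g(X_{k+1},m_{k})$, one has $\mathbb{E}[\xi_{k+1}\otimes\xi_{k+1}\mid\mathcal{F}_{k}] = \varphi(m_{k}) - \nabla G(m_{k})\otimes\nabla G(m_{k})$; as $m_{k}\to m$ almost surely (by \eqref{vitesseasrm}) and $\nabla G$ is continuous with $\nabla G(m)=0$, Assumption \textbf{(A6a)} gives $\mathbb{E}[\xi_{k+1}\otimes\xi_{k+1}\mid\mathcal{F}_{k}]\to\varphi(m)=\Sigma'$ in Frobenius norm, hence $\tfrac{1}{n}\sum_{k=1}^{n}\mathbb{E}[\xi_{k+1}\otimes\xi_{k+1}\mid\mathcal{F}_{k}]\to\Sigma'$; (ii) a \emph{Lindeberg condition}, which follows from the uniformly bounded fourth moments \textbf{(A5a')}: $\sup_{k}\mathbb{E}\|\xi_{k+1}\|^{4}<\infty$ together with $m_{k}\to m$ yields $\tfrac{1}{n}\sum_{k=1}^{n}\mathbb{E}\bigl[\|\xi_{k+1}\|^{2}\mathbf{1}_{\{\|\xi_{k+1}\|>\eta\sqrt{n}\}}\bigr]\to 0$ for every $\eta>0$; (iii) \emph{tightness}, which holds because the limiting operator $\Sigma'$ is trace class, $\operatorname{tr}\Sigma' = \mathbb{E}\|\nabla_{h}g(X,m)\|^{2}\le L_{1}$ by \textbf{(A5a)}. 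Invoking a Hilbert-space martingale CLT then gives $S_{n}^{(2)}$ convergent in law to $\mathcal{N}(0,\Sigma')$, and Slutsky's lemma together with the continuity of $\Gamma_{m}^{-1}$ finishes the proof.

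The main obstacle I expect is twofold. First, the remainder estimate \textbf{(A4)} is only local, so handling the excursions of $m_{k}$ outside $\mathcal{B}(m,\epsilon)$ in the control of $S_{n}^{(3)}$ requires the higher-order moment bounds \eqref{vitlprm}, which is precisely where they are genuinely used. Second, the martingale CLT must be carried out in infinite dimension, working with covariance \emph{operators} in Frobenius/trace norm rather than with matrices; the Lindeberg step is exactly what forces the strengthened moment assumption \textbf{(A5a')}, which does not appear in the purely almost sure theory of \cite{godichon2016}. The argument also closely parallels, and can borrow from, the proof of Theorem \ref{tlcgrad}.
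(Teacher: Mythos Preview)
Your proposal is correct and follows essentially the same route as the paper: the Polyak--Ruppert decomposition via Abel summation, almost-sure negligibility of the non-martingale pieces, and a Hilbert-space martingale CLT (the paper invokes Jakubowski's theorem in \cite{Jak88}, whose three conditions match your (i)--(iii)). One small slip: in your treatment of $S_{n}^{(3)}$ you call on \eqref{vitlprm} ``with $p$ large'', which requires \textbf{(A5b)}, but the theorem only assumes \textbf{(A5a')}; the paper avoids this by citing \cite{godichon2016} for the almost-sure bound $\tfrac{1}{\sqrt{n}}\bigl\|\sum_{k=1}^{n}\delta_{k}\bigr\|\to 0$, which holds already under \textbf{(A1)}--\textbf{(A5a)}.
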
 
The proof is given in Section \ref{sectionproof}. For analogous results, one can also see \cite{schwabe1996stochastic}, \cite{Pel00},  \cite{dippon2006averaged}.

\section{Recursive estimation of the asymptotic variance}\label{sectionvariance}

\subsection{Some existing estimators}
A first naive method to estimate the asymptotic variance could be to estimate the Hessian $\Gamma_{m}$ and the variance $\Sigma '$ as follows
\begin{align*}
\Gamma_{m}^{(n+1)} & = \Gamma_{m}^{(n)} + \frac{1}{n+1}\left( \nabla_{h}^{2}g \left( X_{n+1} , \overline{m}_{n} \right)  - \Gamma_{m}^{(n)} \right) , \\
\Sigma_{n+1}' &  = \Sigma_{n}' + \frac{1}{n+1}\left( \nabla_{h}g \left( X_{n+1} , \overline{m}_{n} \right) \otimes \nabla_{h}g \left( X_{n+1} , \overline{m}_{n} \right)  - \Sigma_{n}' \right) ,
\end{align*}
but the main problem is that under assumptions \textbf{(A2)}, \textbf{(A3)} and \textbf{(A5a)}, if $H$ is an infinite dimensional space, then
\[
\left\| \Gamma_{m} \right\|_{F} = \infty , \quad \quad \text{while} \quad \quad \left\| \Gamma_{m}^{-1}\Sigma ' \Gamma_{m}^{-1} \right\|_{F} \leq \frac{L_{1}}{\lambda_{\min}^2}.
\]
Another problem is that, in order to get a recursive estimator of the asymptotic variance, it needs to invert a matrix at each iteration, which costs much calculus time in high dimensional spaces. A second estimator of the asymptotic variance was introduced in \cite{Pel00}, defined for all $n \geq 1$ by
\begin{equation}
\widehat{\Sigma}_{n} = \frac{1}{\ln n}\sum_{k=1}^{n} \left( m_{k} - \overline{m}_{n} \right) \otimes \left( m_{k} - \overline{m}_{n} \right) ,
\end{equation}
and under \textbf{(A1)} to \textbf{(A6b)}, 
\[
\mathbb{E}\left[ \left\| \widehat{\Sigma}_{n} - \Sigma \right\|_{F}^{2} \right] = O \left( \frac{1}{\ln n}\right) .
\]
Thus, this estimator faces two main problems: it is not recursive and it converges very slowly. Finally, in order to solve the second problem, a faster algorithm was introduced by \cite{gahbiche2000estimation}, defined for all $n \geq 1$ by 
\begin{equation}
\label{algopel} \tilde{\Sigma}_{n} := \frac{1-\delta}{n^{1-\delta}}\sum_{k=1}^{n}\frac{1}{k^{\delta+s+\mu}}\exp \left( -\frac{k^{1-s}}{1-s} \right) \left( \sum_{j=1}^{k}j^{\mu /2}e^{\frac{j^{1-s}}{2(1-s)}}\left( m_{j} - \overline{m}_{n} \right) \right) \otimes \left( \sum_{j=1}^{k}j^{\mu /2}e^{\frac{j^{1-s}}{2(1-s)}}\left( m_{j} - \overline{m}_{n} \right) \right) ,
\end{equation}
with $(1+\alpha)/2 < s < 1$, $\mu \geq 0$ and $s/2 < \delta < (1+s)/2$. This algorithm is first based on an usual decomposition of the stochastic gradient algorithm (see equation (\ref{decdeltabis})) which enables to make appear a martingale term which carries the convergence rate (see equation (\ref{majbourrinsigmanbarre})). In a second time, the objective is to find step sequences which enable to improve the rate of convergence of the variance estimate (see \cite{gahbiche2000estimation} for technical details on assumptions on the step sequences). In the case of finite dimensional spaces, the following convergence in probability is given (under some assumptions)
\[
\frac{n^{1/2-s/2}}{\left( \ln \ln n \right)^{c}}\left\| \tilde{\Sigma}_{n} - \Sigma \right\|_{op} \xrightarrow[n\to \infty]{\mathbb{P}} 0 ,
\]
with $c>0$. A first technical problem is that only the convergence in probability is given, in the case of finite dimensional spaces, and for the usual spectral norm. A second one is that it is not recursive and it cannot be easily updated.

\subsection{A recursive and fast estimate}

We now give a recursive version of the algorithm defined by (\ref{algopel}) to estimate the asymptotic variance in separable Hilbert spaces, before establishing its rates of convergence (almost sure and in quadratic mean). This algorithm is defined by
\begin{equation}
\label{defisigman} \Sigma_{n} := \frac{1-\delta}{n^{1-\delta}} \sum_{k=1}^{n} \frac{1}{k^{\delta +s + \mu}}\exp \left( - \frac{k^{1-s}}{1-s}\right) \left( \sum_{j=1}^{k} j^{\mu /2}e^{\frac{j^{1-s}}{2(1-s)}}\left( m_{j} - \overline{m}_{j} \right) \right) \otimes \left( \sum_{j=1}^{k} j^{\mu /2} e^{\frac{j^{1-s}}{2(1-s)}}\left( m_{j} - \overline{m}_{j} \right) \right) , 
\end{equation}
with 
\begin{equation}
\label{condpas}(1 + \alpha)/2 < s < 1, \quad \quad \mu \geq 0, \quad \quad  \text{and} \quad \quad s/2 < \delta < (1 + s)/2. 
\end{equation}
The difference with previous algorithm is the replacement of $\overline{m}_{n}$ by $\overline{m}_{j}$, which enables the estimates to be written recursively for all $n \geq 1$ as
\begin{align*}
V_{n+1} & = V_{n} + (n+1)^{\mu /2}\exp \left( \frac{(n+1)^{1-s}}{2(1-s)}\right)\left( m_{n+1} - \overline{m}_{n+1}\right)  ,\\
\Sigma_{n+1} & = \left( \frac{n}{n+1}\right)^{1-\delta}\Sigma_{n} + \frac{1-\delta}{(n+1)^{\delta +s + \mu}}\exp \left( - \frac{(n+1)^{1-s}}{1-s}\right) V_{n+1} \otimes V_{n+1} ,
\end{align*} 
with $V_{1} = \Sigma_{1} = 0$. Then, contrary to previous algorithms, this one does not need to store all the estimations into memory and can be easily updated. Finally, the following theorem ensures that it is quite fast.

\begin{theo}\label{vitesselpps}
Suppose assumptions \textbf{(A1)} to \textbf{(A5a')} and \textbf{(A6b)} hold. Then, the sequence $\left(\Sigma_{n} \right)$ defined by (\ref{defisigman}) verifies for all positive constant $\gamma$, 
\[
\left\| \Sigma _{n} - \Sigma \right\|_{F}^{2} = o \left( \frac{(\ln n)^{\gamma}}{n^{1-s}}\right) \quad a.s .
\]
Moreover, suppose \textbf{(A5b)} holds too, there is a positive constant $C$ such that for all $n \geq 1$,
\[
\mathbb{E}\left[ \left\| \Sigma_{n} - \Sigma \right\|_{F}^{2} \right] \leq \frac{C}{n^{1-s}}
\]
\end{theo}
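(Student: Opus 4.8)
The plan is to reduce the estimator to a single martingale whose increments carry the rate, in the spirit of the decompositions \eqref{decdeltabis}--\eqref{majbourrinsigmanbarre}. Set $\beta_{j} := j^{\mu/2}e^{j^{1-s}/(2(1-s))}$ and $b_{k} := \sum_{j=1}^{k}\beta_{j}^{2}$; by comparison with integrals $b_{k}\sim k^{\mu+s}e^{k^{1-s}/(1-s)}$, so the outer weight in \eqref{defisigman} obeys $\frac{1-\delta}{n^{1-\delta}}\sum_{k=1}^{n}k^{-(\delta+s+\mu)}e^{-k^{1-s}/(1-s)}b_{k} = \frac{1-\delta}{n^{1-\delta}}\sum_{k=1}^{n}k^{-\delta}(1+O(k^{s-1})) \to 1$. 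First I would establish the key decomposition
\[
W_{k} := \sum_{j=1}^{k}\beta_{j}(m_{j}-\overline{m}_{j}) = \Gamma_{m}^{-1}N_{k} + \mathcal{R}_{k}, \qquad N_{k} := \sum_{j=1}^{k}\beta_{j}\,\xi_{j+1},
\]
by inserting the standard rewriting $\Gamma_{m}(m_{j}-m) = \gamma_{j}^{-1}(m_{j}-m_{j+1}) - \delta_{j} + \xi_{j+1}$, with $\delta_{j} := \nabla G(m_{j})-\Gamma_{m}(m_{j}-m)$ (so $\|\delta_{j}\|\leq C_{\epsilon}\|m_{j}-m\|^{2}$ near $m$ by \textbf{(A4)}), into $m_{j}-\overline{m}_{j} = (m_{j}-m) - \frac{1}{j}\sum_{l=1}^{j}(m_{l}-m)$ and Abel-summing the resulting term $\sum_{j}\gamma_{j}^{-1}\beta_{j}(m_{j}-m_{j+1})$. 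The remainder $\mathcal{R}_{k}$ collects $\Gamma_{m}^{-1}\sum_{j}\beta_{j}(\overline{m}_{j}-m)$, the term $\Gamma_{m}^{-1}\sum_{j}\beta_{j}\delta_{j}$, and the boundary and increment terms of the Abel summation; using $s>\alpha$, $\alpha>s/2$ and the rates \eqref{vitesseasrm}, \eqref{vitlprm}, \eqref{vitasmoy}, \eqref{vitlpmoy}, one checks $\mathbb{E}\|\mathcal{R}_{k}\|^{2} = o(b_{k})$ and $\|\mathcal{R}_{k}\|^{2} = o(b_{k}(\ln k)^{\gamma'})$ a.s., which is negligible in front of $N_{k}$ (for which $\mathbb{E}\|N_{k}\|^{2} \asymp b_{k}$).

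Squaring, $W_{k}\otimes W_{k} = \Gamma_{m}^{-1}(N_{k}\otimes N_{k})\Gamma_{m}^{-1} + \Gamma_{m}^{-1}N_{k}\otimes\mathcal{R}_{k} + \mathcal{R}_{k}\otimes\Gamma_{m}^{-1}N_{k} + \mathcal{R}_{k}\otimes\mathcal{R}_{k}$, the last three being $o(b_{k})$ (in $L^{1}$, resp.\ a.s.\ up to logs) by Cauchy--Schwarz. Then write $N_{k}\otimes N_{k} = \langle N\rangle_{k} + M_{k}$, with $\langle N\rangle_{k} := \sum_{j=1}^{k}\beta_{j}^{2}\,\mathbb{E}[\xi_{j+1}\otimes\xi_{j+1}\mid\mathcal{F}_{j}] = \sum_{j=1}^{k}\beta_{j}^{2}(\varphi(m_{j}) - \nabla G(m_{j})\otimes\nabla G(m_{j}))$ and $M_{k} := N_{k}\otimes N_{k}-\langle N\rangle_{k}$ an $\mathcal{S}(H)$-valued martingale. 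By \textbf{(A6b)} (only \textbf{(A6a)} for the bare convergence) and \eqref{vitesseasrm}--\eqref{vitlprm} one has $\varphi(m_{j}) = \Sigma' + O(\|m_{j}-m\|)$ and $\|\nabla G(m_{j})\| = O(\|m_{j}-m\|)$, hence $\langle N\rangle_{k} = b_{k}\Sigma' + O(b_{k}k^{-\alpha/2})$ in the appropriate sense. Substituting all this into \eqref{defisigman}: the deterministic main term gives $\frac{1-\delta}{n^{1-\delta}}\sum_{k=1}^{n}k^{-\delta}(1+O(k^{s-1}))\,\Gamma_{m}^{-1}\Sigma'\Gamma_{m}^{-1} = \Sigma + O(n^{\delta-1})$ (here $\delta<(1+s)/2$ gives $2(\delta-1)<s-1$); the $\varphi(m_{j})-\Sigma'$ correction contributes a term of order $n^{-\alpha/2}$, whose square $n^{-\alpha}$ is $O(n^{-(1-s)})$ since $s>1-\alpha$; and the decisive term is
\[
F_{n} := \frac{1-\delta}{n^{1-\delta}}\sum_{k=1}^{n}k^{-(\delta+s+\mu)}e^{-k^{1-s}/(1-s)}\,\Gamma_{m}^{-1}M_{k}\Gamma_{m}^{-1}.
\]

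Put $d_{k} := k^{-(\delta+s+\mu)}e^{-k^{1-s}/(1-s)}$; since $d_{k+1}/d_{k} = 1-k^{-s}(1+o(1))$ one has $\sum_{k\geq j}d_{k} \sim D_{j} := j^{-(\delta+\mu)}e^{-j^{1-s}/(1-s)}$ (the sum being dominated by its first terms). Interchanging the summations, $\sum_{k=1}^{n}d_{k}M_{k} = \sum_{j=1}^{n}D_{j}\,\Delta M_{j} - \big(\sum_{k>n}d_{k}\big)M_{n}$: a genuine $\mathcal{S}(H)$-valued martingale transform plus one boundary term, the latter being $O(n^{s-1})$ after multiplication by $n^{\delta-1}$ (up to $\sqrt{\ln n}$ a.s., via a crude almost sure bound $\|M_{n}\|_{F}=O(b_{n}(\ln n)^{c})$). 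Since $\Delta M_{j} = \beta_{j}(\xi_{j+1}\otimes N_{j-1} + N_{j-1}\otimes\xi_{j+1}) + \beta_{j}^{2}(\xi_{j+1}\otimes\xi_{j+1} - \mathbb{E}[\xi_{j+1}\otimes\xi_{j+1}\mid\mathcal{F}_{j}])$, assumption \textbf{(A5a')} (finite fourth gradient moments, hence $\mathbb{E}\|N_{j-1}\|^{4} = O(b_{j-1}^{2})$ by Burkholder's inequality in Hilbert space) gives $\mathbb{E}\|\Delta M_{j}\|_{F}^{2} = O(\beta_{j}^{2}b_{j})$, so that $D_{j}^{2}\beta_{j}^{2}b_{j} = O(j^{s-2\delta})$ with $-1<s-2\delta<0$ (exactly the two bounds on $\delta$ in \eqref{condpas}); thus $\mathbb{E}\big\|\sum_{j=1}^{n}D_{j}\Delta M_{j}\big\|_{F}^{2} = O(n^{1+s-2\delta})$ and $\mathbb{E}\|F_{n}\|_{F}^{2} = O(n^{-(1-s)})$. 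The almost sure bound $\|F_{n}\|_{F}^{2} = o((\ln n)^{\gamma}n^{-(1-s)})$ follows from the strong law for the martingale $\sum_{j}D_{j}\Delta M_{j}$: with normalizer $n^{1+s-2\delta}(\ln n)^{1+\varepsilon}$, $\sum_{j}\mathbb{E}\|D_{j}\Delta M_{j}\|_{F}^{2}/(j^{1+s-2\delta}(\ln j)^{1+\varepsilon}) = \sum_{j}O(j^{-1}(\ln j)^{-1-\varepsilon}) < \infty$, and Kronecker's lemma concludes. Summing the deterministic main term, the $\varphi$-correction, the boundary term, $F_{n}$ and the $\mathcal{R}_{k}$-contributions — each $O(n^{-(1-s)})$ resp.\ $o((\ln n)^{\gamma}n^{-(1-s)})$ — proves the theorem; the routine bad-event estimates on $\{\|m_{j}-m\|\geq\epsilon\}$, where \textbf{(A4)} and \textbf{(A6b)} need not hold, are absorbed via the polynomial moment bounds \eqref{vitlprm}, and this is where \textbf{(A5b)} enters the $L^{2}$ statement.

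The step I expect to be the main obstacle is this martingale-transform estimate for $F_{n}$: it hinges on the sharp asymptotics $b_{k}\sim k^{\mu+s}e^{k^{1-s}/(1-s)}$, the matching tail $\sum_{k\geq j}d_{k}\sim D_{j}$, and the fourth-moment control $\mathbb{E}\|\Delta M_{j}\|_{F}^{2}=O(\beta_{j}^{2}b_{j})$, so that the exponent bookkeeping collapses exactly to $j^{s-2\delta}$ and the summability threshold coincides with the target rate $n^{-(1-s)}$; it is precisely the requirement that these exponents \emph{align}, not merely be summable, that forces the step conditions \eqref{condpas}. A secondary, purely organizational difficulty is to carry the remainder estimates (for $\mathcal{R}_{k}$, for $\varphi(m_{j})-\Sigma'$, and for the bad events) through the weighted double sum \eqref{defisigman} without losing the rate.
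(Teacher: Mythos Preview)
Your approach is correct and is, in its skeleton, the same as the paper's: isolate the martingale $N_{k}=\sum_{j}\beta_{j}\xi_{j+1}$ via the decomposition \eqref{decdeltabis}, control the remainders with the rates \eqref{vitesseasrm}--\eqref{vitlpmoy} and Lemma~\ref{lemtech}, use \textbf{(A6b)} to replace $\mathbb{E}[\xi_{j+1}\otimes\xi_{j+1}\mid\mathcal{F}_{j}]$ by $\Sigma'$ up to $O(\|m_{j}-m\|)$, and then extract the rate $n^{-(1-s)}$ from the fluctuating part by an interchange of summations that turns the double sum into a single martingale.

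The organization differs in one genuinely useful way. The paper first passes from $m_{j}-\overline{m}_{j}$ to $m_{j}-m$ (Lemma~\ref{majopasbelle0}), then to the normalization $b_{k}^{-1}$ (Lemma~\ref{lemmajopasbelle}), and finally expands $M_{k+1}\otimes M_{k+1}$ into the five pieces $\sum a_{j}^{2}\Xi_{j+1}\otimes\Xi_{j+1}$, $\sum a_{j}\Xi_{j+1}\otimes M_{j}$, $\sum a_{j}\Xi_{j+1}\otimes(M_{k+1}-M_{j+1})$ and their transposes; the two ``off-diagonal'' terms are handled by a second interchange and a somewhat heavy double-martingale computation (the $(\star\star)$ block in the proof of Proposition~\ref{vitsigmanbarre}). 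Your Doob decomposition $N_{k}\otimes N_{k}=\langle N\rangle_{k}+M_{k}$ followed by the single Abel step $\sum_{k}d_{k}M_{k}=\sum_{j}(\sum_{k\ge j}d_{k})\Delta M_{j}-(\sum_{k>n}d_{k})M_{n}$ collapses those five pieces into one martingale transform plus one boundary term, and the exponent bookkeeping $D_{j}^{2}\beta_{j}^{2}b_{j}\asymp j^{s-2\delta}$ then gives both the $L^{2}$ bound and, via Kronecker, the almost sure rate in one stroke. This is cleaner than the paper's route and makes transparent why the window $s/2<\delta<(1+s)/2$ in \eqref{condpas} is exactly what is needed. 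The price is that your remainder $\mathcal{R}_{k}$ now bundles together the $\overline{m}_{j}-m$ contribution, the $\delta_{j}$ contribution and the Abel boundary of $\sum\beta_{j}\gamma_{j}^{-1}(m_{j}-m_{j+1})$, so the cross-term estimate $\sum_{k}d_{k}\,N_{k}\otimes\mathcal{R}_{k}$ has to be carried out with the fourth-moment Cauchy--Schwarz (this is where the condition $s>(1+\alpha)/2$ is actually used, exactly as in Lemma~\ref{lempleinmaj}); you should make this step explicit rather than leave it as ``one checks''.
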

\noindent The proof is given in Section \ref{sectionproof}. 

\begin{cor}
Suppose assumptions \textbf{(A1)} to \textbf{(A5a')} and \textbf{(A6b)} hold. Then, for all positive constant $\gamma$, 
\[
\left\| \tilde{\Sigma}_{n} - \Sigma \right\|_{F}^{2} = o \left( \frac{(\ln n)^{\gamma}}{n^{1-s}}\right) \quad a.s .
\]
Moreover, suppose \textbf{(A5b)} holds too, there is a positive constant $C$ such that for all $n \geq 1$,
\[
\mathbb{E}\left[ \left\| \tilde{\Sigma}_{n} - \Sigma \right\|_{F}^{2} \right] \leq \frac{C}{n^{1-s}}
\]
\end{cor}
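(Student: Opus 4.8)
Since Theorem \ref{vitesselpps} already provides the stated rates for $\Sigma_{n}$, the plan is to compare $\tilde{\Sigma}_{n}$ with $\Sigma_{n}$ and to show that the difference is negligible. Write $w_{j}:=j^{\mu/2}\exp\!\big(\tfrac{j^{1-s}}{2(1-s)}\big)$, $c_{k}:=k^{-\delta-s-\mu}\exp\!\big(-\tfrac{k^{1-s}}{1-s}\big)$, $\overline{w}_{k}:=\sum_{j=1}^{k}w_{j}$ and $M_{k}:=\sum_{j=1}^{k}w_{j}(m_{j}-m)$. Since $m_{j}-\overline{m}_{n}=(m_{j}-m)-(\overline{m}_{n}-m)$, the inner sum of $\tilde{\Sigma}_{n}$ equals $M_{k}-\overline{w}_{k}(\overline{m}_{n}-m)$; expanding the tensor square and factoring out the $k$-free vector $\overline{m}_{n}-m$ one obtains the exact identity $\tilde{\Sigma}_{n}=T_{n}+\mathrm{I}_{n}+\mathrm{II}_{n}$, where $T_{n}:=(1-\delta)n^{\delta-1}\sum_{k=1}^{n}c_{k}\,M_{k}\otimes M_{k}$ is the estimator $\Sigma_{n}$ with $\overline{m}_{j}$ replaced by the true $m$, with $Z_{n}:=\sum_{k=1}^{n}c_{k}\overline{w}_{k}M_{k}$,
\[
\mathrm{I}_{n}:=-(1-\delta)n^{\delta-1}\big(Z_{n}\otimes(\overline{m}_{n}-m)+(\overline{m}_{n}-m)\otimes Z_{n}\big),\qquad \mathrm{II}_{n}:=(1-\delta)n^{\delta-1}\Big(\sum_{k=1}^{n}c_{k}\overline{w}_{k}^{2}\Big)(\overline{m}_{n}-m)\otimes(\overline{m}_{n}-m).
\]
Since $\tilde{\Sigma}_{n}-\Sigma=(T_{n}-\Sigma)+\mathrm{I}_{n}+\mathrm{II}_{n}$, it suffices to show that $T_{n}-\Sigma$, $\mathrm{I}_{n}$ and $\mathrm{II}_{n}$ are each $o\big((\ln n)^{\gamma}/n^{1-s}\big)$ a.s. and $O(1/n^{1-s})$ in quadratic mean.

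For $T_{n}$: with $A_{k}:=\sum_{j\le k}w_{j}(\overline{m}_{j}-m)$ one has $T_{n}-\Sigma_{n}=(1-\delta)n^{\delta-1}\sum_{k}c_{k}\big(M_{k}\otimes A_{k}+A_{k}\otimes M_{k}-A_{k}\otimes A_{k}\big)$, which is precisely the type of lower‑order correction controlled in the proof of Theorem \ref{vitesselpps} when $\Sigma_{n}$ is compared with its martingale term; hence $T_{n}-\Sigma_{n}$, and therefore $T_{n}-\Sigma$, obeys the announced bounds. For $\mathrm{II}_{n}$: an Abel summation on the exponentially weighted sequences gives $\overline{w}_{k}\asymp k^{s}w_{k}$ and $\sum_{k\le n}c_{k}\overline{w}_{k}^{2}\asymp\sum_{k\le n}k^{s-\delta}\asymp n^{1+s-\delta}$ (the exponent being $>-1$ since $\delta<(1+s)/2$), so $\|\mathrm{II}_{n}\|_{F}=O(n^{s})\,\|\overline{m}_{n}-m\|^{2}$; then the almost sure rate (\ref{vitasmoy}) and the moment bound (\ref{vitlpmoy}) for $\overline{m}_{n}$ yield $\|\mathrm{II}_{n}\|_{F}^{2}=o\big((\ln n)^{\gamma}/n^{1-s}\big)$ a.s. and $\mathbb{E}\|\mathrm{II}_{n}\|_{F}^{2}=O(n^{-2(1-s)})$, in particular $O(1/n^{1-s})$.

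The heart of the proof is the bound on $\mathrm{I}_{n}$, for which the key quantity is $Z_{n}=\sum_{k\le n}c_{k}\overline{w}_{k}M_{k}$. A termwise bound $\|Z_{n}\|\le\sum_{k}c_{k}\overline{w}_{k}\|M_{k}\|$ is too weak (it would only give $n^{-(1-s)/2}$ for $\mathrm{I}_{n}$); instead one interchanges the summations, $Z_{n}=\sum_{j\le n}\big(\sum_{k=j}^{n}c_{k}\overline{w}_{k}\big)w_{j}(m_{j}-m)=\sum_{j\le n}a_{j}^{(n)}(m_{j}-m)$, and checks, again by Abel summation ($\sum_{k\ge j}c_{k}\overline{w}_{k}\asymp j^{s-\delta}/w_{j}$), that $|a_{j}^{(n)}|\le C\,j^{s-\delta}$ uniformly in $n$. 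Plugging in the usual decomposition $m_{j}-m=\sum_{i<j}\gamma_{i}\Pi_{i}^{j}\xi_{i+1}+(\text{remainder})$ with $\Pi_{i}^{j}:=\prod_{i<l<j}(I-\gamma_{l}\Gamma_{m})$, the leading part of $Z_{n}$ becomes a martingale sum $\sum_{i<n}\gamma_{i}\big(\sum_{i<j\le n}a_{j}^{(n)}\Pi_{i}^{j}\big)\xi_{i+1}$; the exponential contraction of $\Pi_{i}^{j}$ from \textbf{(A3)}, together with $\mathbb{E}\|\xi_{i+1}\|^{2}=O(1)$ (from \textbf{(A5a)} and (\ref{vitlprm})) and Burkholder's inequality, gives $\mathbb{E}\|Z_{n}\|^{2}=O\big(\sum_{j\le n}(a_{j}^{(n)})^{2}\big)=O(n^{1+2(s-\delta)})$, the fourth moment being $O(n^{2+4(s-\delta)})$ under \textbf{(A5b)}, and the corresponding a.s. bound $\|Z_{n}\|=O(n^{1/2+s-\delta}(\ln n)^{\gamma'})$ follows as in the proof of Theorem \ref{vitesselpps}. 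Since $\|\mathrm{I}_{n}\|_{F}\le 2(1-\delta)n^{\delta-1}\|Z_{n}\|\,\|\overline{m}_{n}-m\|$, combining with $\|\overline{m}_{n}-m\|=o\big(n^{-1/2}(\ln n)^{(1+\delta)/2}\big)$ a.s. (resp. $\mathbb{E}\|\overline{m}_{n}-m\|^{4}=O(n^{-2})$ and Cauchy–Schwarz) gives $\|\mathrm{I}_{n}\|_{F}^{2}=o\big((\ln n)^{\gamma}/n^{1-s}\big)$ a.s. and $\mathbb{E}\|\mathrm{I}_{n}\|_{F}^{2}=O(n^{-2(1-s)})$. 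Putting the three pieces together proves the corollary.

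The main obstacle is exactly this estimate on $Z_{n}$ (and, in the same spirit, on the $A_{k}$–corrections hidden in $T_{n}-\Sigma_{n}$): one cannot split the sum over $k$ by the triangle inequality without losing a factor $n^{(1-s)/2}$, so the martingale cancellation across the index $k$ has to be used genuinely, by reversing the order of summation and applying a Burkholder‑type inequality to a martingale whose coefficients are the $n$‑dependent, exponentially weighted operators $\sum_{i<j\le n}a_{j}^{(n)}\Pi_{i}^{j}$. The accompanying asymptotics for the weights $w_{j}$, $\overline{w}_{k}$, $c_{k}$ and the matching of the polylogarithmic factors in the almost sure statements are then routine and parallel those already carried out in the proof of Theorem \ref{vitesselpps}.
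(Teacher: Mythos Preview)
Your decomposition $\tilde{\Sigma}_{n}=T_{n}+\mathrm{I}_{n}+\mathrm{II}_{n}$ and the reduction of $T_{n}-\Sigma$ to the pieces already handled in the proof of Theorem~\ref{vitesselpps} (namely Lemma~\ref{majopasbelle0} for $T_{n}-\Sigma_{n}$, and then Theorem~\ref{vitesselpps} itself for $\Sigma_{n}-\Sigma$) is exactly the route implicit in the paper. The bound on $\mathrm{II}_{n}$ is also correct and matches the computation of the term $(**)$ in the proof of Lemma~\ref{majopasbelle0}.

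The one point where you go astray is your treatment of $\mathrm{I}_{n}$. Your claim that the termwise bound $\|Z_{n}\|\le\sum_{k}c_{k}\overline{w}_{k}\|M_{k}\|$ is ``too weak'' is wrong: it already yields the required rate. Indeed, with $\overline{w}_{k}\asymp k^{s}w_{k}$ and $(\mathbb{E}\|M_{k}\|^{4})^{1/4}=O\big(k^{s/2}\exp\!\big(\tfrac{k^{1-s}}{2(1-s)}\big)\big)$ (Lemma~\ref{lemtech}), one gets by Lemma~\ref{lemsum}
\[
\big(\mathbb{E}\|Z_{n}\|^{4}\big)^{1/4}\le\sum_{k=1}^{n}c_{k}\overline{w}_{k}\big(\mathbb{E}\|M_{k}\|^{4}\big)^{1/4}=O\!\left(\sum_{k=1}^{n}k^{s/2-\delta}\right)=O\big(n^{1+s/2-\delta}\big),
\]
and then, by Cauchy--Schwarz together with $\mathbb{E}\|\overline{m}_{n}-m\|^{4}=O(n^{-2})$,
\[
\mathbb{E}\|\mathrm{I}_{n}\|_{F}^{2}\le 4(1-\delta)^{2}n^{2(\delta-1)}\sqrt{\mathbb{E}\|Z_{n}\|^{4}}\sqrt{\mathbb{E}\|\overline{m}_{n}-m\|^{4}}=O\big(n^{2\delta-2}\cdot n^{2+s-2\delta}\cdot n^{-1}\big)=O\big(n^{-(1-s)}\big),
\]
which is precisely what is needed. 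The almost sure statement follows in the same way from the almost sure rates for $\overline{m}_{n}-m$ and the almost sure version of Lemma~\ref{lemtech}. This is exactly the computation of the cross term $(*)$ in the proof of Lemma~\ref{majopasbelle0}, just with $\overline{m}_{n}$ in place of $\overline{m}_{j}$, and it is in fact slightly simpler because $\overline{m}_{n}-m$ factors out of the inner sum. Your interchange-of-summation/Burkholder argument for $Z_{n}$ would, if carried through, give the sharper order $O(n^{-2(1-s)})$ for $\mathbb{E}\|\mathrm{I}_{n}\|_{F}^{2}$, but it is not needed for the corollary and introduces unnecessary technicalities.
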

\begin{rmq}
The constant $C$ in Theorem \ref{vitesselpps} depends on the constants introduced in assumptions, on the initialization of the stochastic gradient descent, and on $\alpha,\delta , \mu , s,c_\gamma $. 
\end{rmq}
\begin{rmq}
Estimating recursively the asymptotic variance coupled with Theorem  \ref{theotlc} can be useful to build online asymptotic confidence balls. Moreover, in the recent literature, non asymptotic convergence rates are often given under the form
\[
\mathbb{E}\left[ \left\| \overline{m}_{n} - m \right\|^{2} \right] \leq \frac{\left\| \Sigma \right\|_{F}}{n} + R_{n},
\]
where $R_{n}$ is a rest term. Then, using the recursive variance estimates could enable to have, in practice, a precise bound of the quadratic mean error, and in the short term, it could allow to get precise non asymptotic confidence balls.
\end{rmq}
\begin{rmq}
In order to get a faster algorithm (in term of computational time), one can consider a parallelized version of previous estimates. This consists in splitting the sample into $p$ parts, and to run the algorithm on each subsample to get $p$ estimates $\Sigma_{n/p,i}$, before taking the mean of these $p$ last ones.
\end{rmq}

\section{Applications}\label{sectionapplication}
\subsection{Application to the logistic regression}
Let $d$ be a positive integer, and let $Y \in \left\lbrace -1,1 \right\rbrace$ and $X \in \mathbb{R}^{d}$ be random variables. In order to get the parameter $m^{l} \in \mathbb{R}^{d}$ of the logistic regression, the aim is to minimize the functional $G_{l}$ defined for all $h \in \mathbb{R}^{d}$ by
\begin{equation}
G_{l}(h) := \mathbb{E}\left[ \log \left( 1+ \exp \left( -Y \left\langle X,h \right\rangle \right) \right) \right] .
\end{equation}
Under usual assumptions (see \cite{bach2014adaptivity} among others), the functional $G_{l}$ is locally strongly convex and twice Fréchet differentiable with for all $h \in \mathbb{R}^{d}$,
\begin{align*}
& \nabla G_{l}(h) = - \mathbb{E}\left[ \frac{\exp \left( -Y \left\langle X,h \right\rangle \right)}{1+\exp \left( -Y \left\langle X,h \right\rangle \right)}YX \right] ,
& \nabla^{2}G_{l}(h) = \mathbb{E}\left[ \frac{\exp \left( -Y \left\langle X,h \right\rangle \right)}{\left( 1+\exp \left( -Y \left\langle X,h \right\rangle \right)\right)^{2}}        X\otimes X \right] .
\end{align*}
Then, the parameters of the logistic regression and the asymptotic variance can be estimated simultaneously as:
\begin{align*}
& m_{n+1}^{l} = m_{n}^{l} + \gamma_{n} \frac{\exp \left( -Y_{n+1} \left\langle X_{n+1},m_{n}^{l} \right\rangle \right)}{1+\exp \left( -Y_{n+1} \left\langle X_{n+1},m_{n}^{l} \right\rangle \right)}Y_{n+1}X_{n+1} , \\
& \overline{m}_{n+1}^{l} = \overline{m}_{n}^{l} + \frac{1}{n+1}\left( m_{n+1}^{l} - \overline{m}_{n}^{l} \right) , \\
& V_{n+1}^{l} = V_{n}^{l} +  (n+1)^{\mu /2}\exp \left( \frac{(n+1)^{1-s}}{2(1-s)}\right)\left( m_{n+1}^{l} - \overline{m}_{n+1}^{l} \right) , \\
& \Sigma_{n+1}^{l} = \left( \frac{n}{n+1}\right)^{1-\delta}\Sigma_{n}^{l} + \frac{1-\delta}{(n+1)^{\delta +s + \mu}}\exp \left( - \frac{(n+1)^{1-s}}{1-s} \right) V_{n+1}^{l} \otimes V_{n+1}^{l} .
\end{align*}

\subsection{Application to the geometric median and geometric quantiles}
Let $H$ be a separable Hilbert space and let $X$ be a random variable taking values in $H$. Let $v~ \in ~H$ such that $\left\| v \right\| < 1$, the geometric quantile $m^{v}$ corresponding to the direction $v$ (see \cite{Cha96}) is defined by
\begin{equation}
m^{v} := \arg \min_{h \in H} \mathbb{E}\left[ \left\| X-h \right\| - \left\| X \right\| \right] - \left\langle h, v \right\rangle ,
\end{equation}
and in a particular case, the geometric median $m$ (see \cite{Hal48}) corresponds to the case where $v=0$. Under usual assumptions (see \cite{Kem87} and \cite{HC} among others), the functional $G_{v}$ is locally strongly convex and twice Fréchet-differentiable with for all $h \in H$,
\begin{align*}
& \nabla G^{v}(h) = - \mathbb{E}\left[ \frac{X-h}{\left\| X-h \right\|} +v \right] , & \nabla^{2}G^{v}(h) = \mathbb{E}\left[ \frac{1}{\left\| X-h \right\|} \left( I_{H} - \frac{\left( X-h \right) \otimes \left( X-h \right)}{\left\| X-h \right\|^{2}}\right)\right].
\end{align*}
Then, it is possible to estimate simultaneously and recursively the geometric quantile $m^{v}$ as well as the asymptotic variance of the averaged estimator as follows:
\begin{align*}
& m_{n+1}^{v} = m_{n}^{v} + \gamma_{n} \left( \frac{X_{n+1}-m_{n}^{v}}{\left\| X_{n+1} - m_{n}^{v} \right\|} + v \right) , \\
& \overline{m}_{n+1}^{v} = \overline{m}_{n}^{v} + \frac{1}{n+1}\left( m_{n+1}^{v} - \overline{m}_{n}^{v} \right) , \\
& V_{n+1}^{v} = V_{n}^{v} +  (n+1)^{\mu /2}\exp \left( \frac{(n+1)^{1-s}}{2(1-s)}\right)\left( m_{n+1}^{v} - \overline{m}_{n+1}^{v} \right) , \\
& \Sigma_{n+1}^{v} = \left( \frac{n}{n+1}\right)^{1-\delta}\Sigma_{n} + \frac{1-\delta}{(n+1)^{\delta +s + \mu}}\exp \left( - \frac{(n+1)^{1-s}}{1-s} \right) V_{n+1}^{v} \otimes V_{n+1}^{v} .
\end{align*}
Note that under usual assumptions, the asymptotic variance obtained is the same as the one obtained with non-recursive estimates  \citep{MR2238141,Ger08} in the special case of the geometric median. 

\subsection{A short simulation study}

We focus here on the estimation of the geometric median. We consider from now that $X$ is a random variable taking values in $\mathbb{R}^{d}$, with $d \geq 3$, and following a uniform law on the unit sphere $\mathcal{S}^{d}$. Then, the geometric median $m$ is equal to $0$ and the Hessian of the functional $G_{0}$ at $m$ verifies
\[
\Gamma_{m} = \mathbb{E}\left[ \frac{1}{\| X \|}\left( I_{d} - \frac{X}{\| X \|} \otimes \frac{X}{\| X \|} \right) \right] = I_{d} -  \mathbb{E} \left[ X \otimes X \right] = \frac{d-1}{d}I_{d}.
\]
Note that assumptions \textbf{(A1)} and \textbf{(A6b)} are then verified (see Section 3 in \cite{godichon2016}, Lemma A.1 in \cite{GBP2016} and the Appendix to be convinced).
Finally, the asymptotic variance of the stochastic gradient estimate and of its averaged version verify
\begin{align*}
& \Sigma_{RM}  = \int_{0}^{\infty} e^{-s\Gamma_{m}}\mathbb{E}\left[ \frac{X}{\| X \|}\otimes\frac{X}{\| X \|} \right] e^{-s\Gamma_{m}} ds = \frac{1}{2(d-1)}I_{d}, \\
& \Sigma  = \Gamma_{m}^{-1} \mathbb{E}\left[ \frac{X}{\| X \|}\otimes\frac{X}{\| X \|} \right] \Gamma_{m}^{-1}  = \frac{d}{(d-1)^2} I_{d}.
\end{align*}
First, let us consider a stepsequence $\gamma_{n}=n^{-2/3}$ and let us study the quality of the Gaussian approximation of $Q_{n}, Q_{n}'$, where 
\[
Q_{n}:= \sqrt{2(d-1)}n^{1/3}\left( m_{n} - m \right) ,\quad \quad \text{and} \quad \quad Q_{n}':= \sqrt{n}\frac{d-1}{\sqrt{d}}\left( \overline{m}_{n} - m \right).
\]
Figure \ref{fig0rm} (respectively Figure \ref{fig0})  seems to confirm Theorem \ref{tlcgrad} (respectively Theorem \ref{theotlc}) since we can see that the estimated density of a component of $Q_{n}$ (respectively $Q_{n}'$) is close to the density of $\mathcal{N}\left( 0,1 \right)$, and so, even for small sample sizes ($n=200$), which is also confirmed by a Kolmogorov-Smirnov test.

\begin{figure}[H]
\begin{center}
\includegraphics[scale=0.4]{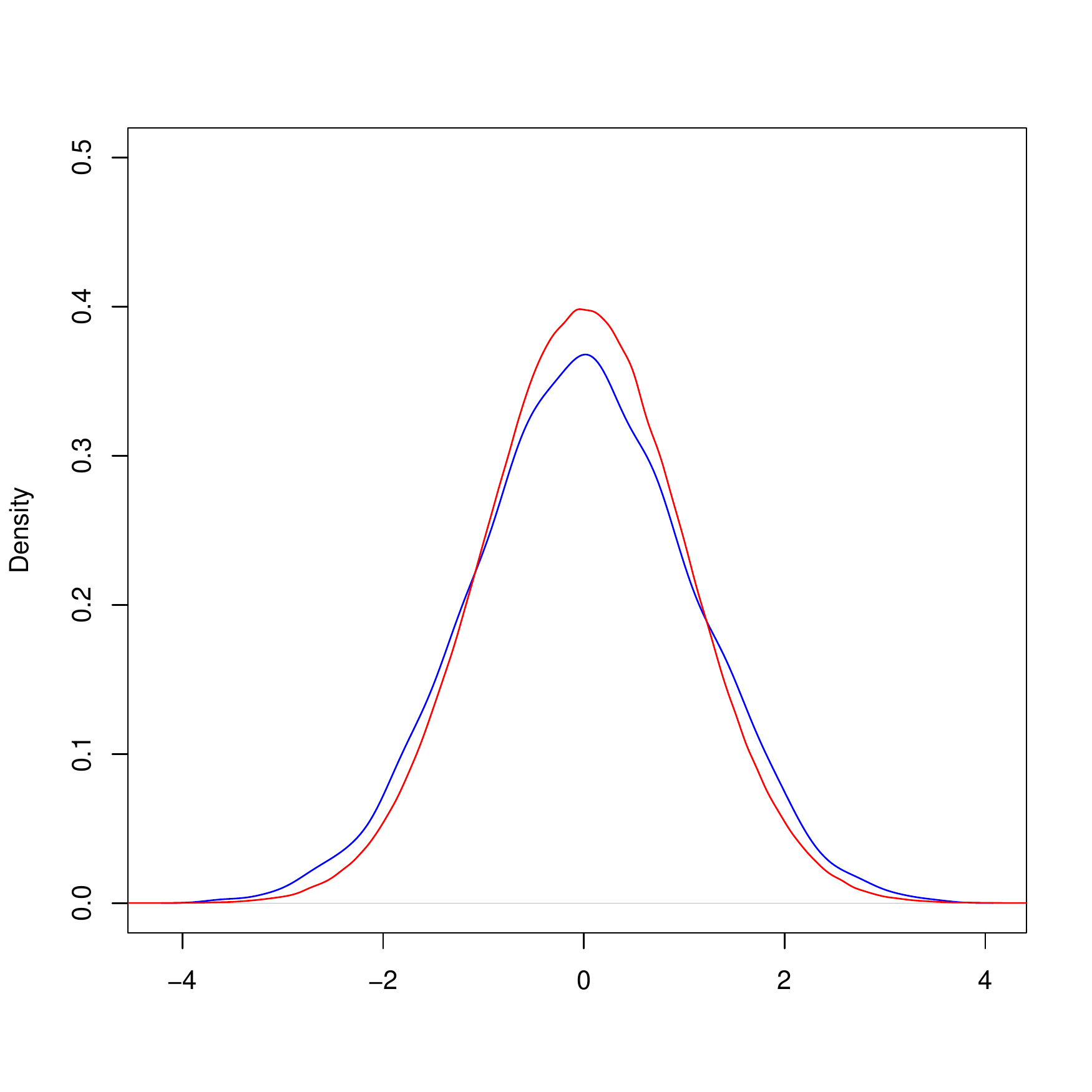}
\includegraphics[scale=0.4]{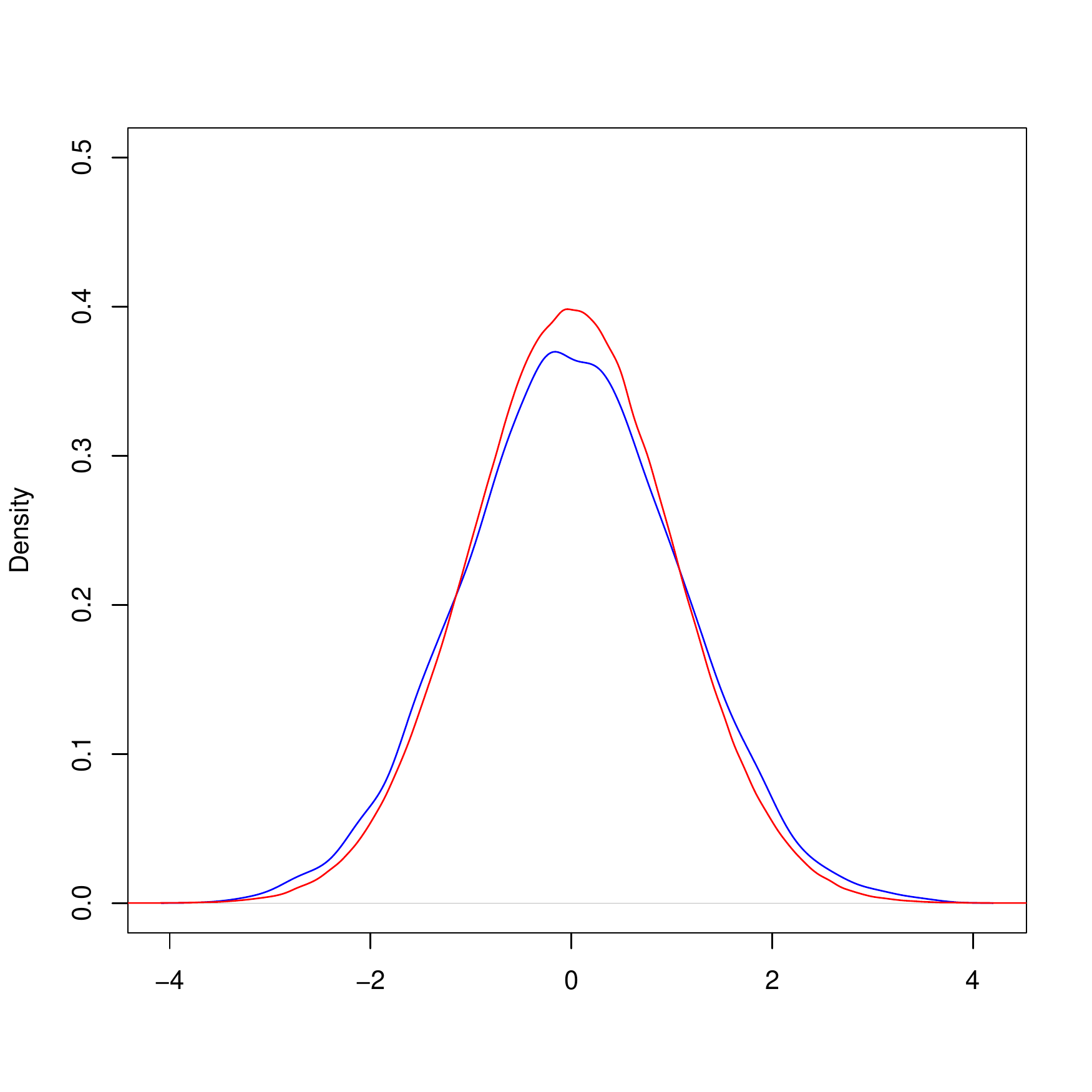}
\caption{Estimated density of a component of $Q_{n}$ (in blue) compared to the standard gaussian density (in red), with $n=200$ (on the left) and $n=5000$ (on the right).}\label{fig0rm}
\end{center}
\end{figure}

\begin{figure}[H]
\begin{center}
\includegraphics[scale=0.4]{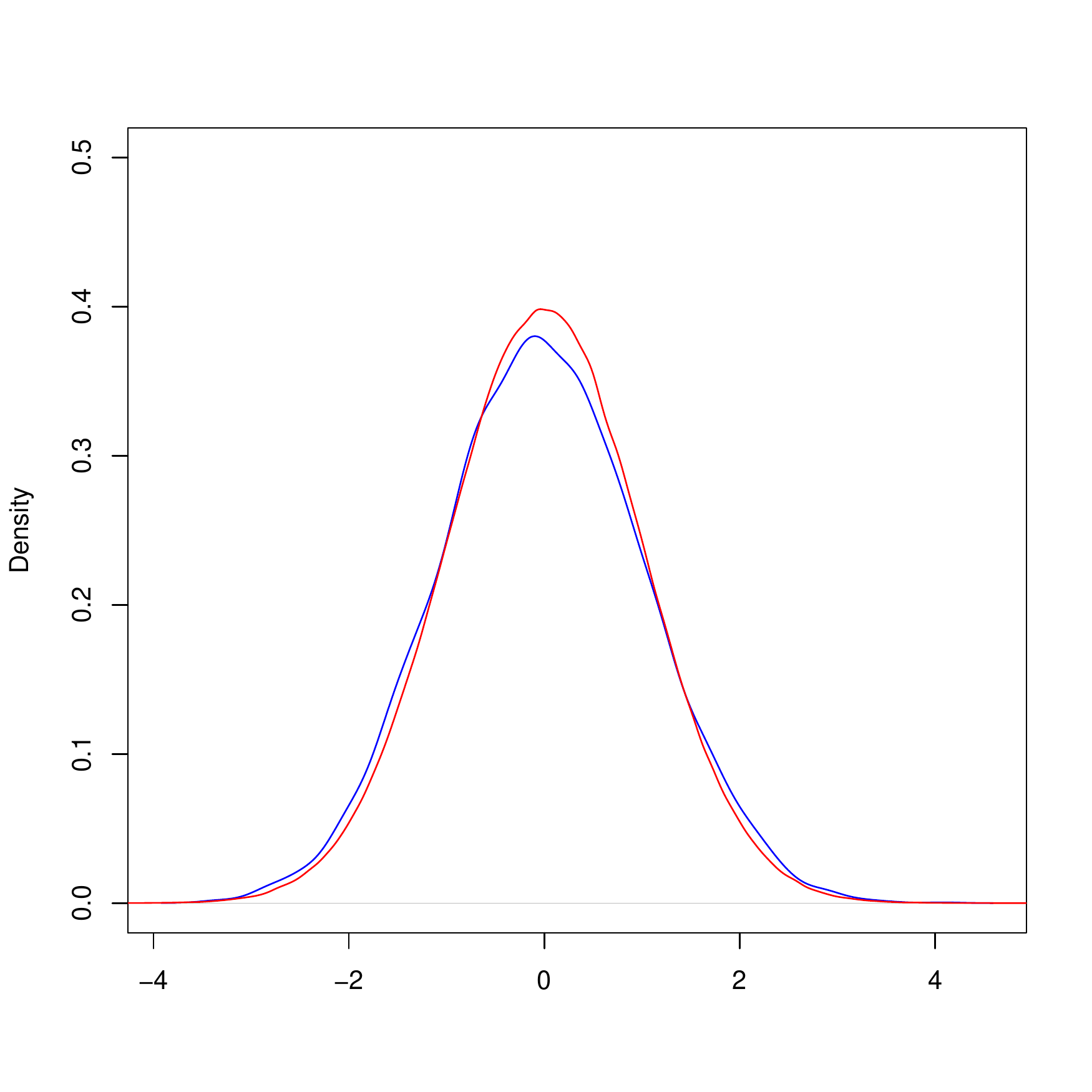}
\includegraphics[scale=0.4]{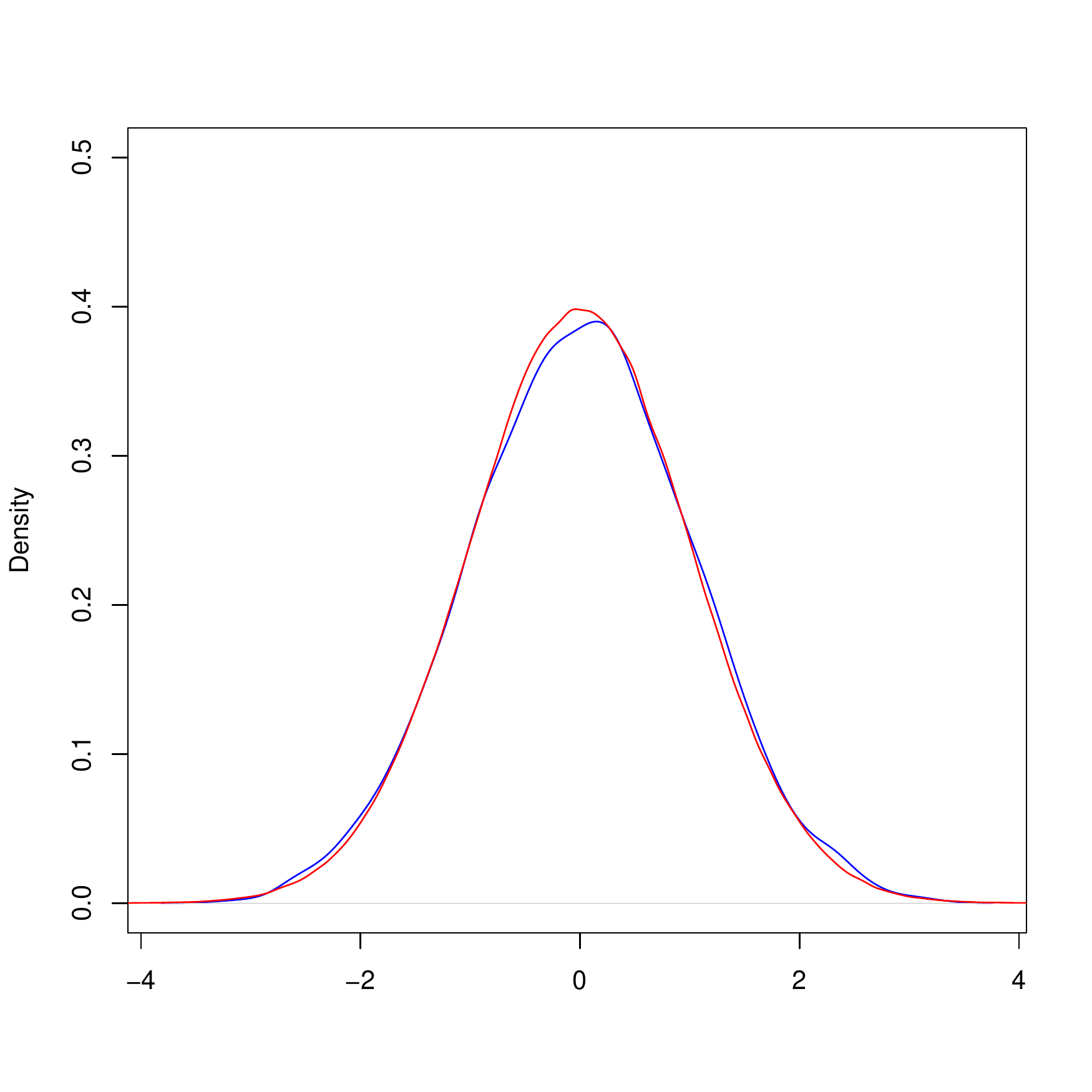}
\caption{Estimated density of a component of $Q_{n}'$ (in blue) compared to the standard gaussian density (in red), with $n=200$ (on the left) and $n=5000$ (on the right).}\label{fig0}
\end{center}
\end{figure}

In Figure \ref{fig1}, we consider the evolution of the quadratic mean error, with respect to the Frobenius norm, of the estimates $\left( \Sigma_{n} \right)$ of $\Sigma$ defined by (\ref{defisigman}), with regard to the sample size. For this, we generate $100$ samples, and use the parallelized version of the algorithms. Figure~\ref{fig1} tends to confirm that for small dimensional spaces ($d=10$), the estimates of the asymptotic variance converge quite quickly and that it is still the case for moderate dimensional spaces ($d=5000$).

\begin{figure}[H]
\begin{center}
\includegraphics[scale=0.4]{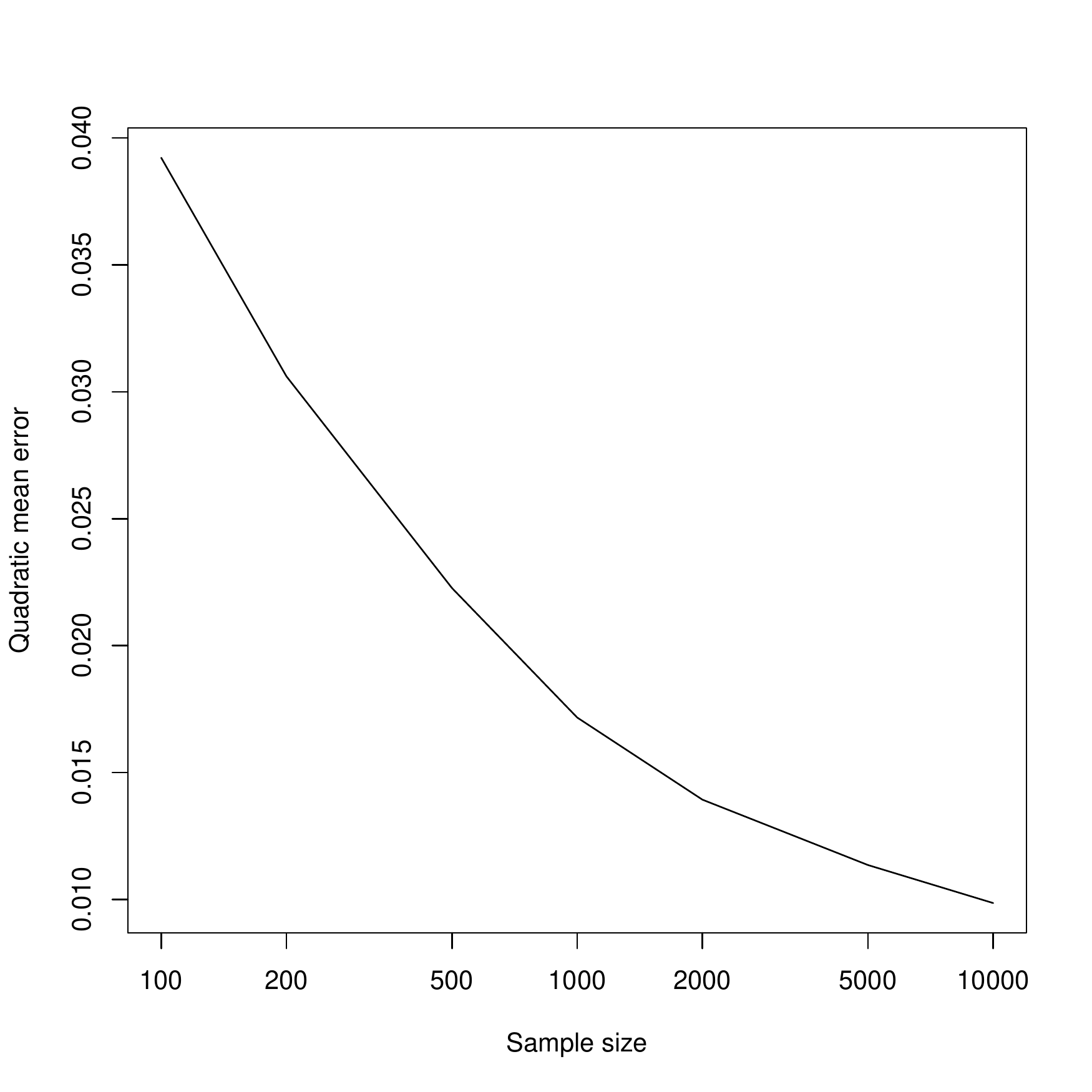}
\includegraphics[scale=0.4]{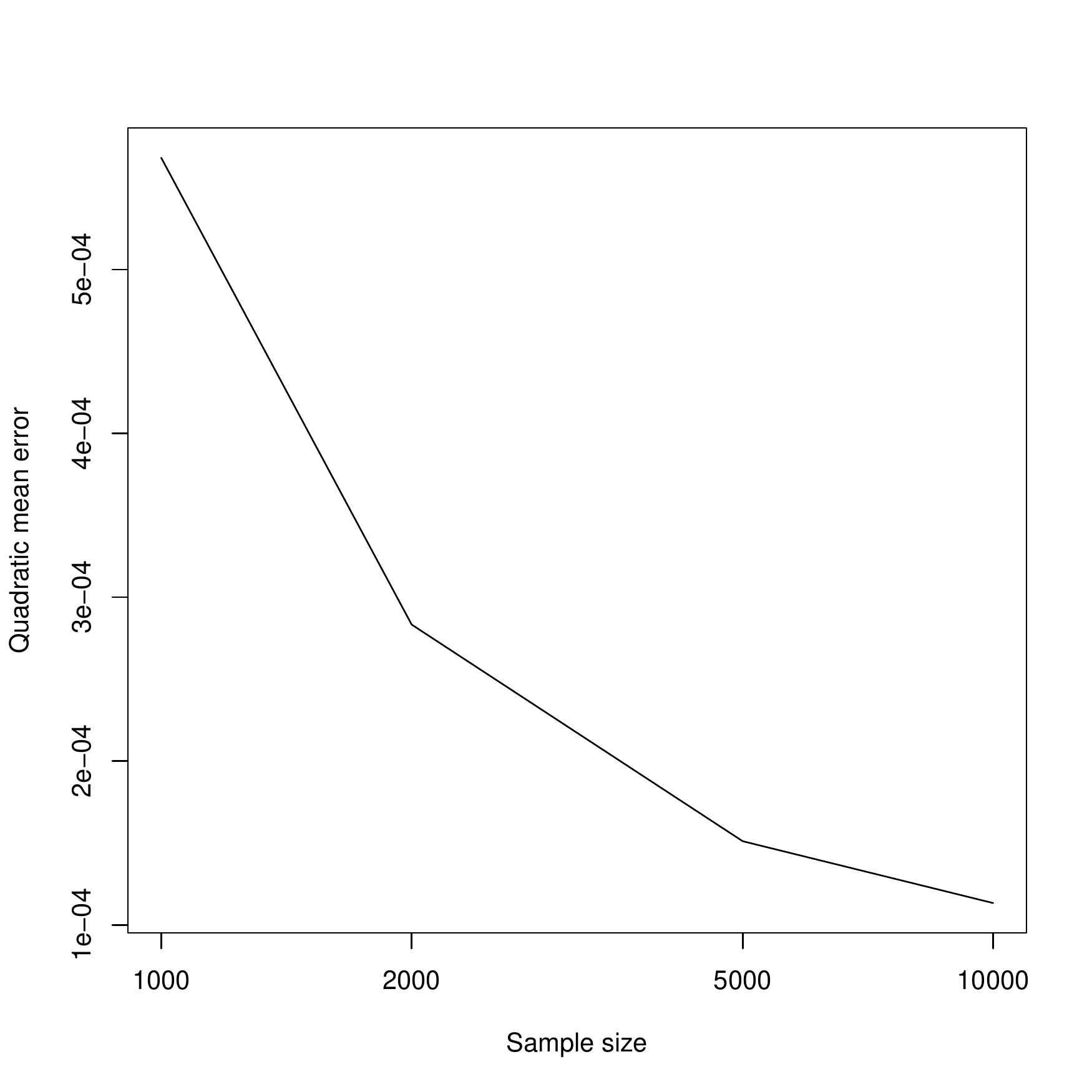}
\caption{Evolution of the quadratic mean error of the estimation of the asymptotic variance $\Sigma$ with respect to the Frobenius norm for $d=10$ (on the left) and $d=5000$ (on the right).}\label{fig1}
\end{center}
\end{figure}

\section{Proofs}\label{sectionproof}
\subsection{Some decompositions of the algorithms}
In order to simplify the proofs, let us now give some decompositions of the algorithms.

\subsubsection{The Robbins-Monro algorithm} Let us recall that the stochastic gradient algorithm can be written as
\[
m_{n+1} - m = m_{n} - m - \gamma_{n} \Phi \left( m_{n} \right) + \gamma_{n} \xi_{n+1} .
\]
Linearizing the gradient, it comes
\begin{equation}
\label{decdelta} m_{n+1} - m = \left( I_{H} - \gamma_{n} \Gamma_{m} \right)\left( m_{n} - m \right) + \gamma_{n} \xi_{n+1} - \gamma_{n} \delta_{n}, 
\end{equation}
where $\delta_{n} := \Gamma_{m} \left( m_{n} - m \right) - \Phi \left( m_{n} \right)$ is the remainder term in the Taylor's expansion of the gradient. 
Thanks to previous decomposition and with the help of an induction (see \cite{dufloalgo} or \cite{Duf97} for instance), one can check that for all $n \geq 1$,
\begin{equation}
\label{decbeta} m_{n} - m = \beta_{n-1} \left( m_{1} - m \right)  - \beta_{n-1} \sum_{k=1}^{n-1}\gamma_{k} \beta_{k}^{-1}\delta_{k} + \beta_{n-1} \sum_{k=1}^{n-1}\gamma_{k} \beta_{k}^{-1}\xi_{k+1} ,
\end{equation}
with $\beta_{n} := \prod_{k=1}^{n} \left( I_{H} - \gamma_{k} \Gamma_{m} \right)$ for all $n \geq 1$ and $\beta_{0} := I_{H}$.
Finally, the asymptotic variance can be seen as the almost sure limit of the sequence of random variables $\left( \Gamma_{m}^{-1}\xi_{n} \otimes \Gamma_{m}^{-1}\xi_{n} \right)_{n}$ (see the proof of Theorem \ref{theotlc}). Then, in order to prove the convergence of the estimates, we need to exhibit this sequence. In this aim, one can rewrite equation (\ref{decdelta}) as 
\begin{equation}
\label{decdeltabis}m_{n} - m = \frac{T_{n}}{\gamma_{n}}- \frac{T_{n+1}}{\gamma_{n}} + \Xi_{n+1} - \Delta_{n},
\end{equation}
with
\[ T_{n} := \Gamma_{m}^{-1} \left( m_{n} - m \right) , \quad \quad  \Xi_{n+1} := \Gamma_{m}^{-1} \left( \xi_{n+1} \right) , \quad \quad  \Delta_{n} := \Gamma_{m}^{-1} \left( \delta_{n} \right) .
\]

\subsubsection{The averaged algorithm}
Summing equalities (\ref{decdeltabis}) and dividing by $n$, we obtain the following decomposition of the averaged estimator
\begin{equation}
\label{decmoybis} \overline{m}_{n} - m = \frac{1}{n} \sum_{k=1}^{n}\left( \frac{T_{k}}{\gamma_{k}} - \frac{T_{k+1}}{\gamma_{k}}\right) - \frac{1}{n}\sum_{k=1}^{n} \Delta_{k} + \frac{1}{n}\sum_{k=1}^{n} \Xi_{k+1} . 
\end{equation}
Finally, by linearity and applying an Abel's transform to the first term on the right-hand side of previous equality (see \cite{delyon1992stochastic} or \cite{delyon1993accelerated} for instance), 
\begin{align}
\notag \Gamma_{m} \left( \overline{m}_{n} - m \right) & = \frac{m_{1} - m}{n\gamma_{1}} - \frac{m_{n+1}-m}{n\gamma_{n}} + \frac{1}{n}\sum_{k=2}^{n} \left( \frac{1}{\gamma_{k}} - \frac{1}{\gamma_{k-1}} \right) \left( m_{k} - m \right) - \frac{1}{n}\sum_{k=1}^{n} \delta_{k} \\
\label{decmoy} & + \frac{1}{n}\sum_{k=1}^{n}\xi_{k+1}.
\end{align}

\subsubsection{The recursive estimator of the asymptotic variance}
In order to simplify the proof of Theorem \ref{vitesselpps}, we will introduce a new estimator of the variance. In this aim, let us now introduce the sequences $\left( a_{n} \right)_{n \geq 1}$ and $\left( b_{n} \right)_{n \geq 1}$ defined for all $n \geq 1$ by $a_{n} := \exp \left( \frac{n^{1-s}}{2(1-s)} \right)$ and $b_{n}:= \sum_{k=1}^{n}a_{k}^{2}$. Then, thanks to decomposition (\ref{decdeltabis}), let
\begin{align}
\notag\overline{T}_{n} & := \frac{1}{\sqrt{b_{n}}}\sum_{k=1}^{n} a_{k} \left( m_{k} - m \right) \\
\notag & = \frac{1}{\sqrt{b_{n}}}\left(  \sum_{k=1}^{n} \frac{a_{k}}{\gamma_{k}} \left( T_{k} - T_{k+1} \right) + \sum_{k=1}^{n} a_{k} \Delta_{k} + \sum_{k=1}^{n} a_{k}\Xi_{k+1} \right) \\
\label{defiai}& =: \frac{1}{\sqrt{b_{n}}}\left( A_{1,n} + A_{2,n} + M_{n+1} \right) . 
\end{align}
In order to simplify several proofs, we now give $L^{p}$ upper bounds of the terms on the right-hand side of previous equality.
\begin{lem}\label{lemtech}
Suppose assumptions \textbf{(A1)} to \textbf{(A5b)} hold. Then, for all positive integer $p$,
\begin{align*}
& \mathbb{E}\left[ \left\| \sum_{k=1}^{n} \frac{a_{k}}{\gamma_{k}} \left( T_{k} - T_{k+1} \right) \right\|^{2p} \right] =  O \left( \exp \left( \frac{pn^{1-s}}{1-s} \right) n^{p\alpha } \right) , \\
& \mathbb{E}\left[ \left\| \sum_{k=1}^{n} a_{k} \Delta_{k} \right\|^{2p} \right] = O \left( \exp \left( \frac{pn^{1-s}}{1-s} \right) n^{p(s-\alpha } \right) , \\
& \mathbb{E}\left[ \left\| \sum_{k=1}^{n} a_{k}\Xi_{k+1} \right\|^{2p}\right] = O \left( \exp \left( \frac{pn^{1-s}}{1-s} \right) n^{p s } \right)
\end{align*}
\end{lem}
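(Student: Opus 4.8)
The plan is to bound each of the three terms separately, treating the most delicate one — the martingale term $M_{n+1} = \sum_{k=1}^n a_k \Xi_{k+1}$ — via a maximal inequality, and the other two by brute-force moment estimates using the known $L^{2p}$ rates \eqref{vitlprm} for $m_n - m$. Throughout I would use the elementary comparison $b_n = \sum_{k=1}^n a_k^2 \asymp n^s a_n^2 = n^s \exp\!\bigl(\tfrac{n^{1-s}}{1-s}\bigr)$, which follows from $a_k^2 = \exp\!\bigl(\tfrac{k^{1-s}}{1-s}\bigr)$ being log-convex and slowly varying in its increments (compare the sum to an integral, or to its last term times the number of ``effective'' terms $\sim n^s$); the same estimate gives $\sum_{k=1}^n k^\beta a_k^2 \asymp n^{s+\beta} a_n^2$ for any fixed $\beta$. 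This is the workhorse identity and I would prove it once as a preliminary.

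\textbf{The martingale term.} Since $\Xi_{k+1} = \Gamma_m^{-1}\xi_{k+1}$ and $(\xi_{k+1})$ is an $(\mathcal F_k)$-martingale difference sequence, $M_{n+1} = \sum_{k=1}^n a_k \Gamma_m^{-1}\xi_{k+1}$ is an $H$-valued martingale. By the Burkholder inequality in Hilbert space (or by Doob plus the $L^{2p}$-Rosenthal-type bound for martingales), $\mathbb E\bigl[\|M_{n+1}\|^{2p}\bigr] \lesssim \mathbb E\bigl[\bigl(\sum_{k=1}^n a_k^2 \|\Gamma_m^{-1}\xi_{k+1}\|^2\bigr)^p\bigr]$, and then Minkowski in $L^p$ reduces this to $\bigl(\sum_{k=1}^n a_k^2 \,\|\xi_{k+1}\|_{L^{2p}}^2\bigr)^p$ up to the operator-norm factor $\|\Gamma_m^{-1}\|_{op}^{2p}$, which is finite by \textbf{(A3)}. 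Now $\|\xi_{k+1}\|_{L^{2p}}^2$ is bounded: $\xi_{k+1} = \Phi(m_k) - \nabla_h g(X_{k+1},m_k)$, and using \textbf{(A5b)} conditionally on $\mathcal F_k$ together with \eqref{vitlprm} gives $\mathbb E[\|\xi_{k+1}\|^{2p}] \lesssim 1 + \mathbb E[\|m_k-m\|^{2p}] \lesssim 1$. Hence $\mathbb E[\|M_{n+1}\|^{2p}] \lesssim \bigl(\sum_{k=1}^n a_k^2\bigr)^p = b_n^p \asymp n^{ps}\exp\!\bigl(\tfrac{pn^{1-s}}{1-s}\bigr)$, which is exactly the claimed bound.

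\textbf{The remainder term $A_{2,n} = \sum_{k=1}^n a_k \Delta_k$.} Here $\Delta_k = \Gamma_m^{-1}\delta_k$ with $\delta_k = \Gamma_m(m_k-m) - \nabla G(m_k)$, so by \textbf{(A4)} on the event $m_k \in \mathcal B(m,\epsilon)$ one has $\|\delta_k\| \le C_\epsilon \|m_k-m\|^2$, while off that event one controls things crudely using \textbf{(A2)} and \textbf{(A5b)}; in either case $\|\Delta_k\|_{L^{2p}} \lesssim \|m_k-m\|_{L^{4p}}^2 \lesssim k^{-\alpha}$ by \eqref{vitlprm}. Then Minkowski gives $\|A_{2,n}\|_{L^{2p}} \lesssim \sum_{k=1}^n a_k k^{-\alpha}$, and since $a_k$ grows super-polynomially the sum is dominated by a neighborhood of $k=n$, so $\sum_{k=1}^n a_k k^{-\alpha} \asymp n^{s/2} a_n n^{-\alpha}$ (again comparison with an integral / last-term analysis, this time on $\sum a_k^2$-type sums after Cauchy–Schwarz, or directly). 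Raising to the power $2p$ yields $O\bigl(n^{p(s/2-\alpha)\cdot 2}\,a_n^{2p}\bigr) = O\bigl(\exp(\tfrac{pn^{1-s}}{1-s})\,n^{p(s-2\alpha)}\bigr)$; note the stated exponent $n^{p(s-\alpha)}$ in the lemma appears to have a typo and the honest exponent is $s-2\alpha$, but either way it is $o$ of the martingale bound since $\alpha > 1/2 > s/2$… wait, one must check $s - 2\alpha$ vs $s$: since $\alpha>0$ this term is indeed smaller, consistent with the claim that $M_{n+1}$ carries the rate.

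\textbf{The telescoping/gradient term $A_{1,n} = \sum_{k=1}^n \tfrac{a_k}{\gamma_k}(T_k - T_{k+1})$.} I would apply an Abel summation to move the $a_k/\gamma_k$ weights onto the $T_k$'s: $\sum_{k=1}^n \tfrac{a_k}{\gamma_k}(T_k-T_{k+1}) = \tfrac{a_1}{\gamma_1}T_1 - \tfrac{a_n}{\gamma_n}T_{n+1} + \sum_{k=2}^n\bigl(\tfrac{a_k}{\gamma_k}-\tfrac{a_{k-1}}{\gamma_{k-1}}\bigr)T_k$. With $\gamma_k = c_\gamma k^{-\alpha}$ and $T_k = \Gamma_m^{-1}(m_k-m)$, the boundary term $\tfrac{a_n}{\gamma_n}T_{n+1}$ has $L^{2p}$ norm $\lesssim a_n n^{\alpha} \|m_{n+1}-m\|_{L^{2p}} \lesssim a_n n^{\alpha/2}$ by \eqref{vitlprm}, giving $2p$-th moment $O(a_n^{2p} n^{p\alpha})$ — the dominant contribution. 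For the sum, $\tfrac{a_k}{\gamma_k} - \tfrac{a_{k-1}}{\gamma_{k-1}} = O\bigl(\tfrac{a_k}{\gamma_k}(k^{-s} + k^{-1})\bigr) = O(a_k k^{\alpha - s})$ since the increment of $\log a_k$ is $\sim k^{-s}$ and of $\log(1/\gamma_k)$ is $\sim k^{-1}$, and $k^{-s}$ dominates as $s<1$; then $\bigl\|\sum_{k=2}^n a_k k^{\alpha-s} T_k\bigr\|_{L^{2p}} \lesssim \sum_{k=2}^n a_k k^{\alpha - s} \cdot k^{-\alpha/2} = \sum a_k k^{\alpha/2 - s}$, again dominated by $k\sim n$, giving $\lesssim n^{s/2} a_n n^{\alpha/2 - s} = a_n n^{\alpha/2 - s/2}$, whose $2p$-th power $O(a_n^{2p} n^{p(\alpha - s)})$ is $o$ of the boundary term (since $s>0$). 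Collecting the three pieces gives the three displayed bounds.

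\textbf{Main obstacle.} The routine calculus is not the issue; the real care is in the summation-by-parts bookkeeping for $A_{1,n}$ — correctly identifying which term (boundary vs. interior sum) dominates and verifying the increment estimate $\tfrac{a_k}{\gamma_k}-\tfrac{a_{k-1}}{\gamma_{k-1}} = O(a_k k^{\alpha-s})$ — and in the sharp two-sided control $b_n \asymp n^s a_n^2$ and its weighted analogues, which underpins every ``the sum behaves like its last term'' step. Getting the exponents of $n$ exactly right (and matching them to the statement, modulo the apparent typo $s-\alpha$ vs $s-2\alpha$ in the second line) is where one has to be most attentive; the martingale bound itself is the easiest of the three once the Burkholder inequality and \textbf{(A5b)}+\eqref{vitlprm} are in hand.
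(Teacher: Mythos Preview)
Your approach mirrors the paper's almost exactly: Abel transform for $A_{1,n}$, the quadratic remainder bound $\|\Delta_k\|\lesssim\|m_k-m\|^2$ together with Minkowski for $A_{2,n}$, and a martingale-moment argument for $M_{n+1}$ (you invoke Burkholder; the paper instead does a direct induction on $p$, but the outcome and effort are the same).

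One arithmetic slip propagates through two of your estimates, however. The correct last-term asymptotic is $\sum_{k=1}^n a_k\,k^{\beta}\asymp n^{\,s+\beta}a_n$, with effective width $\sim n^{s}$, \emph{not} $n^{s/2}$: the exponent of $a_k$ has derivative $\tfrac12 k^{-s}$, so the last $\sim n^{s}$ terms dominate, exactly as for $\sum a_k^2$. With this correction $\|A_{2,n}\|_{L^{2p}}\lesssim n^{\,s-\alpha}a_n$, hence $\mathbb E[\|A_{2,n}\|^{2p}]=O\bigl(a_n^{2p}\,n^{2p(s-\alpha)}\bigr)$; the paper's own proof in fact obtains $n^{2p(s-\alpha)}$, so you are right that the stated $n^{p(s-\alpha)}$ is a typo, but your proposed correction $n^{p(s-2\alpha)}$ is also off. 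Likewise the interior sum in the Abel decomposition of $A_{1,n}$ satisfies $\sum_{k} a_k k^{\alpha/2-s}\asymp n^{\alpha/2}a_n$, giving the \emph{same} order $a_n^{2p}n^{p\alpha}$ as the boundary term, not something strictly smaller; this does not affect the final bound for $A_{1,n}$, but your claim that the boundary term strictly dominates is not correct.
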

The proof of this lemma as well as an analogous lemma which gives the asymptotic almost sure behavior of these terms are given in Appendix. We can now introduce the following estimator
\begin{equation}
\overline{\Sigma}_{n} = \frac{1}{\sum_{k=1}^{n}k^{- \delta}}\sum_{k=1}^{n}\frac{1}{k^{\delta}}T_{k}\otimes T_{k} ,
\end{equation}
and one can decompose $\Sigma_{n}$ as follows:
\begin{align*}
\Sigma_{n} - \Sigma & = \Sigma_{n} - \frac{1-\delta}{n^{1-\delta}} \sum_{k=1}^{n} \frac{1}{k^{\delta +s}}\exp \left( - \frac{k^{1-s}}{1-s}\right) \left( \sum_{j=1}^{k} e^{\frac{j^{1-s}}{2(1-s)}}\left( m_{j} - m \right) \right) \otimes \left( \sum_{j=1}^{k} e^{\frac{j^{1-s}}{2(1-s)}}\left( m_{j} - m \right) \right) \\
&  + \frac{1-\delta}{n^{1-\delta}} \sum_{k=1}^{n} \frac{1}{k^{\delta +s}}\exp \left( - \frac{k^{1-s}}{1-s}\right) \left( \sum_{j=1}^{k} e^{\frac{j^{1-s}}{2(1-s)}}\left( m_{j} - m \right) \right) \otimes \left( \sum_{j=1}^{k} e^{\frac{j^{1-s}}{2(1-s)}}\left( m_{j} - m \right) \right) - \overline{\Sigma}_{n} \\
& + \overline{\Sigma}_{n} - \Sigma .
\end{align*}

\subsection{Proof of Theorem \ref{theotlc}}
\begin{proof}[Proof of Theorem \ref{theotlc}]
Let us recall that the averaged algorithm can be written as
\begin{align*}
 \Gamma_{m} \left( \overline{m}_{n} - m \right) & = \frac{m_{1} - m}{n\gamma_{1}} - \frac{m_{n+1}-m}{n\gamma_{n}} + \frac{1}{n}\sum_{k=2}^{n} \left( \frac{1}{\gamma_{k}} - \frac{1}{\gamma_{k-1}} \right) \left( m_{k} - m \right) - \frac{1}{n}\sum_{k=1}^{n} \delta_{k} \\
 & + \frac{1}{n}\sum_{k=1}^{n}\xi_{k+1}.
\end{align*}
It is proven in \cite{godichon2016} that
\begin{align*}
\frac{\left\| m_{1}-m\right\| }{\sqrt{n}\gamma_{1}} & = o \left( 1 \right) \quad a.s , \\
\frac{\left\| m_{n+1}-m\right\|}{\sqrt{n}\gamma_{n}} & = o ( 1 ) \quad a.s , \\
\frac{1}{\sqrt{n}}\left\| \sum_{k=2}^{n} \left( \frac{1}{\gamma_{k}} - \frac{1}{\gamma_{k-1}}\right)\left( m_{k}-m \right)\right\| & = o ( 1 ) \quad a.s , \\
\frac{1}{\sqrt{n}}\left\| \sum_{k=1}^{n}\delta_{k}\right\| & = o (1) \quad a.s.
\end{align*}
In order get the asymptotic normality of the martingale term $\left( \frac{1}{n}\sum_{k=1}^{n}\xi_{k+1} \right)$, let us check that assumptions of Theorem 5.1 in \cite{Jak88} are fulfilled, i.e let $\left( e_{i} \right)_{i \in I}$ be an orthonormal basis of $H$ and $\psi_{i,j}:= \left\langle \Sigma ' e_{i},e_{j} \right\rangle$ for all $i,j \in I$, we have to verify
\begin{equation}
\label{cond1}\forall \eta > 0, \quad \lim_{n \to \infty} \mathbb{P}\left( \sup_{1 \leq k \leq n} \frac{1}{\sqrt{n}}\left\| \xi_{k+1} \right\| > \eta \right) = 0 ,
\end{equation}
\begin{equation}
\label{cond2} \lim_{n \to \infty} \frac{1}{n}\sum_{k=1}^{n} \left\langle \xi_{k+1} , e_{i} \right\rangle \left\langle \xi_{k+1} , e_{j} \right\rangle = \psi_{i,j} \quad a.s, \quad \forall i,j \in I ,
\end{equation}
\begin{equation}
\label{cond3}\forall \epsilon > 0, \quad \lim_{N \to \infty} \limsup_{n \to \infty} \mathbb{P}\left( \frac{1}{n}\sum_{k=1}^{n} \sum_{j=N}^{\infty}\left\langle \xi_{k+1},e_{j} \right\rangle^{2} > \epsilon \right) = 0.
\end{equation}

\textbf{Proof of (\ref{cond1})} Let $\eta > 0$, applying Markov's inequality,
\begin{align*}
\mathbb{P}\left( \sup_{1 \leq k \leq n}\frac{1}{\sqrt{n}}\left\| \xi_{k+1} \right\| > \eta \right) & \leq \sum_{k=1}^{n} \mathbb{P}\left( \frac{1}{\sqrt{n}}\left\| \xi_{k+1} \right\| > \eta \right) \\
& \leq \frac{1}{n^{2}\eta^{4}}\sum_{k=1}^{n} \mathbb{E} \left[ \left\| \xi_{k+1} \right\|^{4} \right] .
\end{align*}
Then, applying Lemma \ref{lemmajxi}, there is a positive constant $C$ such that
\begin{align*}
\mathbb{P}\left( \sup_{1 \leq k \leq n}\frac{1}{\sqrt{n}}\left\| \xi_{k+1} \right\| > \eta \right) & \leq \frac{1}{n^{2}\eta^{4}}\sum_{k=1}^{n}C = \frac{C}{n\eta^{4}} .
\end{align*}

\medskip

\textbf{Proof of (\ref{cond2}).} First, note that
\[
\frac{1}{n}\sum_{k=1}^{n} \xi_{k+1} \otimes \xi_{k+1} = \frac{1}{n}\sum_{k=1}^{n} \mathbb{E}\left[ \xi_{k+1} \otimes \xi_{k+1} |\mathcal{F}_{k} \right] + \frac{1}{n}\sum_{k=1}^{n} \epsilon_{k+1},
\]
with $\epsilon_{k+1} := \xi_{k+1} \otimes \xi_{k+1} - \mathbb{E} \left[ \xi_{k+1} \otimes \xi_{k+1} |\mathcal{F}_{k} \right]$. Remark that $\left( \epsilon_{n} \right)$ is a sequence of martingale differences adapted to the filtration $\left( \mathcal{F}_{n} \right)$, and one can check that
\[
\lim_{n \to \infty} \frac{1}{n}\sum_{k=1}^{n} \epsilon_{k+1} = 0 \quad a.s.
\]
Let us now prove that the sequence of operators $\left( \mathbb{E}\left[ \xi_{k+1} \otimes \xi_{k+1} |\mathcal{F}_{k} \right] \right)$ converges almost surely to $\Sigma '$, with respect to the Frobenius norm. Note that
\begin{align*}
\left\| \mathbb{E}\left[ \xi_{k+1} \otimes \xi_{k+1} |\mathcal{F}_{k} \right] - \Sigma ' \right\| & = \left\| \mathbb{E}\left[ \nabla_{h} g \left( X_{k+1} , m_{k} \right) \otimes \nabla_{h} g \left( X_{k+1} , m_{k} \right)|\mathcal{F}_{k} \right] - \Sigma ' - \Phi \left( m_{k} \right) \otimes \Phi \left(m_{k} \right)\right\|_{F} \\
& \leq \left\| \mathbb{E}\left[ \nabla_{h} g \left( X_{k+1} , m_{k} \right) \otimes \nabla_{h} g \left( X_{k+1} , m_{k} \right)|\mathcal{F}_{k} \right] - \Sigma ' \right\|_{F} + \left\|  \Phi \left( m_{k} \right) \otimes \Phi \left(m_{k} \right)\right\|_{F}.
\end{align*}
Then, thanks to assumption \textbf{(A6a)}, since $\left\| \Phi (m_{k} ) \right\| \leq C \left\| m_{k} - m \right\|$ and since $\left( m_{k} \right)$ converges to $m$ almost surely (see \cite{godichon2016}),
\begin{align*}
 & \lim_{k \to \infty}\left\| \mathbb{E}\left[ \nabla_{h} g \left( X_{k+1} , m_{k} \right) \otimes \nabla_{h} g \left( X_{k+1} , m_{k} \right)|\mathcal{F}_{k} \right] - \Sigma ' \right\|_{F} = 0 \quad a.s, \\
& \lim_{k \to \infty} \left\|  \Phi \left( m_{k} \right) \otimes \Phi \left(m_{k} \right)\right\|_{F} = \lim_{n \to \infty} \left\|  \Phi (m_{k} ) \right\|^{2} = 0 \quad a.s.
\end{align*}
In a particular case, for all $i,j \in I$,
\[
\lim_{k \to \infty} \left\langle \mathbb{E}\left[ \xi_{k+1} \otimes \xi_{k+1} |\mathcal{F}_{k} \right] (e_{i}) , e_{j} \right\rangle = \psi_{i,j} := \left\langle \Sigma ' (e_{i} ) , e_{j} \right\rangle \quad a.s .  
\] 
Thus, applying Toeplitz's lemma,
\[
\lim_{n \to \infty} \frac{1}{n}\sum_{k=1}^{n}\left\langle \mathbb{E}\left[ \xi_{k+1} \otimes \xi_{k+1} |\mathcal{F}_{k} \right] (e_{i}) , e_{j} \right\rangle = \psi_{i,j} \quad a.s.
\]
Finally, for all $ i,j \in I$,
\begin{align*}
\lim_{n \to \infty} \frac{1}{n}\sum_{k=1}^{n} \left\langle \xi_{k+1} , e_{i} \right\rangle \left\langle \xi_{k+1} , e_{j} \right\rangle & = \lim_{n \to \infty} \frac{1}{n}\sum_{k=1}^{n}\left\langle \xi_{k+1}\otimes \xi_{k+1} (e_{i}),e_{j} \right\rangle \\
& = \psi_{i,j} \quad a.s .
\end{align*}

\textbf{Proof of (\ref{cond3}).} Let $\epsilon > 0$, applying Markov's inequality,
\begin{align*}
\mathbb{P}\left( \frac{1}{n}\sum_{k=1}^{n} \sum_{j=N}^{\infty} \left\langle \xi_{k+1}, e_{j} \right\rangle > \epsilon \right) & \leq \frac{1}{n\epsilon^{2}} \sum_{k=1}^{n} \sum_{j=N}^{\infty} \mathbb{E}\left[ \left\langle \xi_{k+1} , e_{j} \right\rangle^{2} \right] \\
& =\frac{1}{n\epsilon^{2}} \sum_{k=1}^{n} \sum_{j=N}^{\infty} \mathbb{E}\left[ \mathbb{E}\left[ \left\langle \xi_{k+1} , e_{j} \right\rangle^{2}|\mathcal{F}_{k} \right] \right] .
\end{align*}
Since for all $j \in I$, $\left\langle \xi_{k+1} , e_{j} \right\rangle^{2} = \left\langle \xi_{k+1} \otimes \xi_{k+1} (e_{j} , e_{j} \right\rangle$, and by linearity 
\begin{align*}
\mathbb{P}\left( \frac{1}{n}\sum_{k=1}^{n} \sum_{j=N}^{\infty} \left\langle \xi_{k+1}, e_{j} \right\rangle > \epsilon \right) & \leq  \frac{1}{\epsilon^{2}}  \sum_{j=N}^{\infty} \frac{1}{n}\sum_{k=1}^{n} \mathbb{E}\left[  \mathbb{E}\left[ \left\langle \xi_{k+1} \otimes \xi_{k+1} (e_{j}) , e_{j} \right\rangle |\mathcal{F}_{k} \right] \right] \\
& = \frac{1}{\epsilon^{2}}  \sum_{j=N}^{\infty} \frac{1}{n}\sum_{k=1}^{n} \mathbb{E}\left[   \left\langle \mathbb{E}\left[ \xi_{k+1} \otimes \xi_{k+1} |\mathcal{F}_{k} \right] (e_{j}) , e_{j} \right\rangle  \right] .
\end{align*} 
Since $\mathbb{E}\left[ \xi_{k+1} \otimes \xi_{k+1} |\mathcal{F}_{k} \right]$ converges almost surely to $\Sigma '$ and by dominated convergence, 
\[
\limsup_{n}\mathbb{P}\left( \frac{1}{n}\sum_{k=1}^{n} \sum_{j=N}^{\infty} \left\langle \xi_{k+1}, e_{j} \right\rangle > \epsilon \right) \leq \frac{1}{\epsilon} \sum_{j=N}^{\infty} \left\langle \Sigma ' (e_{j}) , e_{j} \right\rangle .
\]
Moreover, since $\Sigma ' = \mathbb{E}\left[ \nabla_{h} g \left( X , m \right) \otimes \nabla_{h} g \left( X , m \right) \right]$, thanks to assumption \textbf{(A5a)},
\begin{align*}
\sum_{j=1}^{\infty} \left\langle \Sigma ' (e_{j}) , e_{j} \right\rangle & = \left\| \mathbb{E}\left[ \nabla_{h} g \left( X , m \right) \otimes \nabla_{h} g \left( X , m \right) \right] \right\|_{F}  \leq \mathbb{E}\left[ \left\| \nabla_{h} g \left( X , m \right) \right\|^{2} \right]  \leq L_{1} .
\end{align*}
Thus, since for all $j \in I$, $\left\langle \Sigma ' (e_{j}) , e_{j} \right\rangle \geq 0$,
\[
\lim_{N \to \infty} \sum_{j=N}^{\infty} \left\langle \Sigma ' (e_{j}) , e_{j} \right\rangle = 0,
\]
which concludes the proof.
\end{proof}

\subsection{Proof of Theorem \ref{vitesselpps}}
For the sake of simplicity, the proof is given for $mu = 0$ (the case where $\mu > 0$ is strictly analogous). Let us recall that equation (\ref{defisigman}) can be written  as
\begin{align}
\notag \Sigma_{n} - \Sigma & = \Sigma_{n} - \frac{1-\delta}{n^{1-\delta}} \sum_{k=1}^{n} \frac{1}{k^{\delta +s}}\exp \left( - \frac{k^{1-s}}{1-s}\right) \left( \sum_{j=1}^{k} e^{\frac{j^{1-s}}{2(1-s)}}\left( m_{j} - m \right) \right) \otimes \left( \sum_{j=1}^{k} e^{\frac{j^{1-s}}{2(1-s)}}\left( m_{j} - m \right) \right) \\
\notag &  + \frac{1-\delta}{n^{1-\delta}} \sum_{k=1}^{n} \frac{1}{k^{\delta +s}}\exp \left( - \frac{k^{1-s}}{1-s}\right) \left( \sum_{j=1}^{k} e^{\frac{j^{1-s}}{2(1-s)}}\left( m_{j} - m \right) \right) \otimes \left( \sum_{j=1}^{k} e^{\frac{j^{1-s}}{2(1-s)}}\left( m_{j} - m \right) \right) - \overline{\Sigma}_{n} \\
\label{decsigman} & + \overline{\Sigma}_{n} - \Sigma .
\end{align}
In order to prove Theorem \ref{vitesselpps}, we just have to give the rates of convergence of the terms on the right-hand side of previous equality. The following lemma gives the almost sure and the rate of convergence in quadratic mean of the first term on the right-hand side of previous equality.
\begin{lem}\label{majopasbelle0}
Suppose assumptions \textbf{(A1)} to \textbf{(A5a')} and \textbf{(A6b)} hold. Then, for all $\gamma > 0$,
\[
\left\| \Sigma_{n} - \frac{1-\delta}{n^{1-\delta}} \sum_{k=1}^{n} \frac{1}{k^{\delta +s}}e^{ - \frac{k^{1-s}}{1-s}} \left( \sum_{j=1}^{k} e^{\frac{j^{1-s}}{2(1-s)}}\left( m_{j} - m \right) \right) \otimes \left( \sum_{j=1}^{k} e^{\frac{j^{1-s}}{2(1-s)}}\left( m_{j} - m \right) \right) \right\|_{F}^{2} = o \left( \frac{(\ln n)^{\gamma}}{n^{1-s}} \right) \quad a.s.
\]
Moreover, suppose assumption \textbf{(A5b)} holds too. Then,
\[
\mathbb{E}\left[ \left\| \Sigma_{n} - \frac{1-\delta}{n^{1-\delta}} \sum_{k=1}^{n} \frac{1}{k^{\delta +s}}e^{ - \frac{k^{1-s}}{1-s}} \left( \sum_{j=1}^{k} e^{\frac{j^{1-s}}{2(1-s)}}\left( m_{j} - m \right) \right) \otimes \left( \sum_{j=1}^{k} e^{\frac{j^{1-s}}{2(1-s)}}\left( m_{j} - m \right) \right) \right\|_{F}^{2} \right]  = O \left( \frac{1}{n^{1-s}} \right) .
\]
\end{lem}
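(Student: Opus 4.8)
\noindent\textbf{Proof plan for Lemma~\ref{majopasbelle0}.} Write $a_{k} := \exp\left( \frac{k^{1-s}}{2(1-s)} \right)$ and $b_{k} := \sum_{j=1}^{k} a_{j}^{2}$ as in \eqref{defiai}, and introduce the weighted partial sums
\[
\widetilde{W}_{k} := \sum_{j=1}^{k} a_{j}\left( m_{j} - m \right), \qquad\qquad D_{k} := \sum_{j=1}^{k} a_{j}\left( \overline{m}_{j} - m \right),
\]
so that $\sum_{j=1}^{k} a_{j}\left( m_{j} - \overline{m}_{j} \right) = \widetilde{W}_{k} - D_{k}$, and the operator whose Frobenius norm must be estimated is
\[
R_{n} := \frac{1-\delta}{n^{1-\delta}}\sum_{k=1}^{n} \frac{a_{k}^{-2}}{k^{\delta+s}}\left( \left( \widetilde{W}_{k} - D_{k} \right)\otimes\left( \widetilde{W}_{k} - D_{k} \right) - \widetilde{W}_{k}\otimes\widetilde{W}_{k} \right).
\]
Expanding the tensor product and using $\left\| h\otimes h' \right\|_{F} = \left\| h \right\|\left\| h' \right\|$ (equation \eqref{normf}) together with the triangle inequality, I would reduce the whole statement to controlling the scalar sum
\[
\left\| R_{n} \right\|_{F} \leq \frac{1-\delta}{n^{1-\delta}}\sum_{k=1}^{n} \frac{a_{k}^{-2}}{k^{\delta+s}}\left( 2\left\| \widetilde{W}_{k} \right\|\left\| D_{k} \right\| + \left\| D_{k} \right\|^{2} \right).
\]

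The plan then rests on two estimates, each relying on the elementary weighted-sum asymptotics $\sum_{j=1}^{k} a_{j} j^{\beta} = O\left( a_{k} k^{s+\beta} \right)$ (which follow from an integral comparison, using $\frac{d}{dt}a_{t} = \frac{1}{2}t^{-s}a_{t}$; this is the computation that also gives $b_{k} \asymp k^{s}a_{k}^{2}$). First, since $\widetilde{W}_{k} = A_{1,k} + A_{2,k} + M_{k+1}$ by \eqref{defiai} and $\alpha < s$, Lemma~\ref{lemtech} and its almost sure counterpart (proved in Appendix) give, for every positive integer $p$ and every $\gamma' > 0$,
\[
\mathbb{E}\left[ \left\| \widetilde{W}_{k} \right\|^{2p} \right] = O\left( a_{k}^{2p} k^{ps} \right), \qquad\qquad \left\| \widetilde{W}_{k} \right\|^{2} = o\left( a_{k}^{2} k^{s}\left( \ln k \right)^{\gamma'} \right) \quad a.s.
\]
Second, for $D_{k}$ I would combine \eqref{vitlpmoy}, which yields $\mathbb{E}\left[ \left\| \overline{m}_{j} - m \right\|^{2p} \right]^{1/2p} = O\left( j^{-1/2} \right)$, with Minkowski's inequality to obtain $\mathbb{E}\left[ \left\| D_{k} \right\|^{2p} \right] = O\left( a_{k}^{2p} k^{p(2s-1)} \right)$; for the sharp almost sure bound $\left\| D_{k} \right\|^{2} = o\left( a_{k}^{2} k^{2s-1}\left( \ln k \right)^{\gamma'} \right)$ (with $\gamma'$ arbitrarily small), rather than the crude bound that would follow from \eqref{vitasmoy} I would insert the decomposition \eqref{decmoy} of $\overline{m}_{j} - m$ and swap the order of summation via an Abel transform, which turns the leading part of $D_{k}$ into a martingale with conditionally bounded increments, whose almost sure growth is then controlled up to an arbitrarily small power of $\ln k$.

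Finally, to assemble the bound I would proceed as follows. For the quadratic mean statement, apply Minkowski's inequality in $L^{2}$ to $\left\| R_{n} \right\|_{F}$ and then Cauchy--Schwarz to $\mathbb{E}\left[ \left\| \widetilde{W}_{k} \right\|^{2}\left\| D_{k} \right\|^{2} \right]^{1/2} \leq \mathbb{E}\left[ \left\| \widetilde{W}_{k} \right\|^{4} \right]^{1/4}\mathbb{E}\left[ \left\| D_{k} \right\|^{4} \right]^{1/4}$; with the case $p = 2$ of the estimates above the cross term contributes $O\left( a_{k}^{2} k^{3s/2 - 1/2} \right)$ and the square term $O\left( a_{k}^{2} k^{2s-1} \right)$, the former dominating since $s < 1$. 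Hence
\[
\mathbb{E}\left[ \left\| R_{n} \right\|_{F}^{2} \right]^{1/2} = O\left( \frac{1}{n^{1-\delta}}\sum_{k=1}^{n} k^{s/2 - 1/2 - \delta} \right),
\]
and since $s/2 < \delta < (1+s)/2$ forces the exponent $s/2 - 1/2 - \delta$ into $\left( -1, -\frac{1}{2} \right)$, the sum grows like $n^{(1+s)/2 - \delta}$, giving $\mathbb{E}\left[ \left\| R_{n} \right\|_{F}^{2} \right] = O\left( n^{-(1-s)} \right)$. The almost sure statement follows by the same exponent accounting, with the $L^{2}$ bounds replaced by the a.s. bounds of the two estimates and Minkowski/Cauchy--Schwarz replaced by a Toeplitz (Stolz--Cesàro) argument on the $k$-sum, the logarithmic powers remaining arbitrarily small; this yields $\left\| R_{n} \right\|_{F}^{2} = o\left( \left( \ln n \right)^{\gamma} n^{-(1-s)} \right)$ a.s. for every $\gamma > 0$. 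The delicate points I expect are the exponent bookkeeping that makes the cross term $\left\| \widetilde{W}_{k} \right\|\left\| D_{k} \right\|$ the leading contribution and pins the rate at $n^{1-s}$, and, for the almost sure part, securing the sharp $k^{s-1/2}$ growth of $\left\| D_{k} \right\|$ with only an infinitesimal logarithmic loss, for which the crude estimate from \eqref{vitasmoy} alone is not quite sufficient.
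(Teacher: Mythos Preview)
Your plan is correct and matches the paper's own argument almost line for line: the paper expands the difference using the same bilinear identity to produce the two cross terms $\widetilde{W}_{k}\otimes D_{k}$, $D_{k}\otimes\widetilde{W}_{k}$ and the pure term $D_{k}\otimes D_{k}$, then bounds each via Lemma~\ref{lemsum} (your Minkowski step), equation~\eqref{normf}, and Cauchy--Schwarz, inserting Lemma~\ref{lemtech} for $\widetilde{W}_{k}$ and the estimate $\mathbb{E}\bigl[\|D_{k}\|^{4}\bigr]=O\bigl(a_{k}^{4}k^{4s-2}\bigr)$ obtained exactly as you describe from \eqref{vitlpmoy} (this is the paper's inequality~\eqref{nouvmaj}). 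The exponent arithmetic you carry out is identical to the paper's and yields the same $O\bigl(n^{-(1-s)}\bigr)$.

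The only point of divergence is the almost sure statement. The paper dismisses it in one line (``the almost sure rate of convergence is quite straightforward'') and gives no details, whereas you flag that the crude bound~\eqref{vitasmoy} on $\|\overline{m}_{j}-m\|$ carries an irreducible $(\ln j)^{1/2+}$ factor, which after summation would restrict the conclusion to $\gamma$ bounded away from zero rather than all $\gamma>0$. Your proposed remedy---inserting decomposition~\eqref{decmoy} into $D_{k}$ and Abel-transforming to expose a martingale whose a.s.\ growth can be controlled with an arbitrarily small logarithmic loss---is a sound way to recover the claim as literally stated. Note, though, that the paper is not fully consistent on this point (Proposition~\ref{vitsigmanbarre} is phrased with ``there is a positive constant'' rather than ``for all''), so you may be holding the lemma to a higher standard than the author intended; either way, the extra step you outline is the right one if the ``for all $\gamma>0$'' reading is enforced.
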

The proof is given in Appendix. The following lemma gives the almost sure and the rate of convergence in quadratic mean of the second term on the right-hand side of equality (\ref{decsigman}).
\begin{lem}\label{lemmajopasbelle}
Suppose assumptions \textbf{(A1)} to \textbf{(A5a')} and \textbf{(A6b)} hold. Then, for all $\gamma > 0$,
\[
\left\| \frac{1-\delta}{n^{1-\delta}} \sum_{k=1}^{n} \frac{1}{k^{\delta +s}}e^{- \frac{k^{1-s}}{1-s}} \left( \sum_{j=1}^{k} e^{\frac{j^{1-s}}{2(1-s)}}\left( m_{j} - m \right) \right) \otimes \left( \sum_{j=1}^{k} e^{\frac{j^{1-s}}{2(1-s)}}\left( m_{j} - m \right) \right) - \overline{\Sigma}_{n} \right\|_{F}^{2} = o \left( \frac{(\ln n)^{\gamma}}{n^{2(1-s)}}\right) \quad a.s.
\]
Moreover, suppose assumption \textbf{(A5b)} holds too. Then
\[
\mathbb{E}\left[ \left\| \frac{1-\delta}{n^{1-\delta}} \sum_{k=1}^{n} \frac{1}{k^{\delta +s}}e^{- \frac{k^{1-s}}{1-s}} \left( \sum_{j=1}^{k} e^{\frac{j^{1-s}}{2(1-s)}}\left( m_{j} - m \right) \right) \otimes \left( \sum_{j=1}^{k} e^{\frac{j^{1-s}}{2(1-s)}}\left( m_{j} - m \right) \right) - \overline{\Sigma}_{n} \right\|_{F}^{2} \right] = O \left( \frac{1}{n^{2(1-s)}}\right) .
\]
\end{lem}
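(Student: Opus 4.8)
The plan is to express both matrices appearing in the statement as weighted averages of the common random operators $\overline{T}_{k}\otimes\overline{T}_{k}$ introduced before Lemma~\ref{lemtech}, and then to reduce the whole estimate to a bound on the difference of the deterministic weights. Recall that $a_{j}=\exp(j^{1-s}/(2(1-s)))$, $b_{k}=\sum_{i=1}^{k}a_{i}^{2}$ and $\overline{T}_{k}=b_{k}^{-1/2}\sum_{j=1}^{k}a_{j}(m_{j}-m)$. Since $\exp(j^{1-s}/(2(1-s)))=a_{j}$, $\exp(-k^{1-s}/(1-s))=a_{k}^{-2}$ and $\sum_{j=1}^{k}a_{j}(m_{j}-m)=\sqrt{b_{k}}\,\overline{T}_{k}$, the first term equals $\frac{1-\delta}{n^{1-\delta}}\sum_{k=1}^{n}\frac{b_{k}}{k^{\delta+s}a_{k}^{2}}\,\overline{T}_{k}\otimes\overline{T}_{k}$, whereas $\overline{\Sigma}_{n}=\big(\sum_{j=1}^{n}j^{-\delta}\big)^{-1}\sum_{k=1}^{n}k^{-\delta}\,\overline{T}_{k}\otimes\overline{T}_{k}$. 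Hence the quantity to bound is $\sum_{k=1}^{n}w_{k,n}\,\overline{T}_{k}\otimes\overline{T}_{k}$, where $w_{k,n}:=\frac{1-\delta}{n^{1-\delta}}\cdot\frac{b_{k}}{k^{\delta+s}a_{k}^{2}}-\frac{k^{-\delta}}{\sum_{j=1}^{n}j^{-\delta}}$ is a scalar.

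The next step is the asymptotic analysis of these weights. Comparing $b_{k}=\sum_{i=1}^{k}\exp(i^{1-s}/(1-s))$ with $\int_{1}^{k}\exp(x^{1-s}/(1-s))\,dx$ and integrating by parts through the identity $(\exp(x^{1-s}/(1-s)))'=x^{-s}\exp(x^{1-s}/(1-s))$ (equivalently, via Euler--Maclaurin) gives $b_{k}/(k^{s}a_{k}^{2})=1+O(k^{-(1-s)})$; similarly $\sum_{j=1}^{n}j^{-\delta}=\frac{n^{1-\delta}}{1-\delta}\,(1+O(n^{-(1-\delta)}))$ since $\delta<1$ by (\ref{condpas}). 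Inserting these expansions and isolating the two error contributions yields, for $1\le k\le n$, a bound of the form $|w_{k,n}|\le C\big(n^{-(1-\delta)}k^{-(\delta+1-s)}+n^{-2(1-\delta)}k^{-\delta}\big)$.

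It remains to pass to the random part. Since $\|\overline{T}_{k}\otimes\overline{T}_{k}\|_{F}=\|\overline{T}_{k}\|^{2}$, and since Lemma~\ref{lemtech} combined with $b_{k}\sim k^{s}a_{k}^{2}$ shows that in decomposition (\ref{defiai}) the martingale part $M_{k+1}/\sqrt{b_{k}}$ is bounded in every $L^{2p}$ while the two remaining parts tend to $0$ in $L^{2p}$, we obtain $\sup_{k}\mathbb{E}[\|\overline{T}_{k}\|^{2p}]<\infty$ for every $p$. Minkowski's inequality then gives $\big\|\sum_{k=1}^{n}w_{k,n}\overline{T}_{k}\otimes\overline{T}_{k}\big\|_{L^{2}}\le C\sum_{k=1}^{n}|w_{k,n}|$, and summing the weight bound (using $\sum_{k\le n}k^{-(\delta+1-s)}=O(n^{s-\delta})$ when $\delta<s$ and $\sum_{k\le n}k^{-\delta}=O(n^{1-\delta})$) produces the announced $O(1/n^{2(1-s)})$ rate in quadratic mean. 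For the almost sure statement I would repeat the same computation with the almost sure analogue of Lemma~\ref{lemtech} quoted in the Appendix, which after division by $\sqrt{b_{k}}$ controls $\|\overline{T}_{k}\|$ up to an at most logarithmic factor almost surely; the same bound on $\sum_{k}|w_{k,n}|$ then produces the rate $o((\ln n)^{\gamma}/n^{2(1-s)})$, the logarithmic slack and the ``little~$o$'' absorbing the genuine (rather than merely $O$) decay of $b_{k}/(k^{s}a_{k}^{2})-1$.

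The main obstacle is the deterministic bookkeeping of the second step: obtaining the sharp exponent $1-s$ in $b_{k}/(k^{s}a_{k}^{2})=1+O(k^{-(1-s)})$, and then verifying that, after division by $k^{\delta}$ and summation, the two error contributions recombine into the stated rate for every $s$ and $\delta$ satisfying (\ref{condpas}) --- the position of $\delta$ relative to $s$ being what decides which of the two contributions is dominant. Once these scalar estimates are secured, the probabilistic step is routine thanks to the moment bounds of Lemma~\ref{lemtech}.
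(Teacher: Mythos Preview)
Your proposal is correct and follows the same route as the paper: both split the discrepancy into the contribution from $\frac{1-\delta}{n^{1-\delta}}-\big(\sum_{j\le n}j^{-\delta}\big)^{-1}=O(n^{-2(1-\delta)})$ and the contribution from $b_{k}/(k^{s}a_{k}^{2})-1=O(k^{-(1-s)})$, bound these by integral comparison, and then feed in the moment estimates of Lemma~\ref{lemtech} (the paper via Lemma~\ref{lemsum}, you via the equivalent uniform bound $\sup_{k}\mathbb{E}\|\overline{T}_{k}\|^{2p}<\infty$). Your packaging of the difference as $\sum_{k}w_{k,n}\,\overline{T}_{k}\otimes\overline{T}_{k}$ is a slightly cleaner restatement of the paper's two-term decomposition into $u_{n}$ and $v_{n}$, but the underlying estimates are identical.
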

The proof is given in Appendix. Finally, the following Proposition gives the almost sure and the rate of convergence in quadratic mean of the last term on the right-hand side of equality (\ref{decsigman}).
\begin{prop}\label{vitsigmanbarre}
Suppose assumptions \textbf{(A1)} to \textbf{(A5a')} and \textbf{(A6b)} hold. Then, there is a positive constant $\gamma$ such that
\[
\left\| \overline{\Sigma}_{n} - \Sigma \right\|_{F}^{2} = o \left( \frac{(\ln n)^{\delta}}{n^{1-s}} \right) \quad a.s .
\]
Suppose assumption \textbf{(A5b)} holds too. Then, there is a positive constant $C$ such that for all $n \geq 1$,
\[
\mathbb{E}\left[ \left\| \overline{\Sigma}_{n} - \Sigma \right\|_{F}^{2} \right] \leq \frac{ C}{n^{1-s}}.
\]
\end{prop}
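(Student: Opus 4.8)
The plan is to reduce $\overline{\Sigma}_n$ to a weighted Ces\`aro average of rank‑one martingale operators and then show that this average concentrates around $\Sigma = \Gamma_m^{-1}\Sigma'\Gamma_m^{-1}$. I will use throughout the elementary asymptotics $b_k \sim k^{s}a_k^2$ and $\sum_{k\le n}k^{-\delta}\sim n^{1-\delta}/(1-\delta)$ (both from an integral comparison), as well as the fact that $\overline{\Sigma}_n$ equals $\big(\sum_{k\le n}k^{-\delta}\big)^{-1}\sum_{k\le n}k^{-\delta}\,\overline{T}_k\otimes\overline{T}_k$ with $\overline{T}_k$ as in (\ref{defiai}). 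First I would reduce to the martingale term: writing $\sqrt{b_k}\,\overline{T}_k = A_{1,k}+A_{2,k}+M_{k+1}$, set $\widetilde M_k := M_{k+1}/\sqrt{b_k}$ and $\widetilde R_k := (A_{1,k}+A_{2,k})/\sqrt{b_k}$, so that $\overline{T}_k\otimes\overline{T}_k = \widetilde M_k\otimes\widetilde M_k + \widetilde M_k\otimes\widetilde R_k + \widetilde R_k\otimes\widetilde M_k + \widetilde R_k\otimes\widetilde R_k$. Lemma \ref{lemtech} (and its almost sure analogue) together with $b_k\sim k^s a_k^2$ give $\|\widetilde R_k\|_{L^{2p}} = O\big(k^{-\rho}\big)$ with $\rho := \tfrac12\min(\alpha,\,s-\alpha)>0$, while Burkholder's inequality for $M_{k+1}$ (using the moment bounds on $\nabla_h g$) gives $\|\widetilde M_k\|_{L^{2p}} = O(1)$. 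By Cauchy--Schwarz in $\mathcal{S}(H)$ and the identity $\|h\otimes h'\|_F = \|h\|\,\|h'\|$, the difference between $\overline{\Sigma}_n$ and $\widetilde{\Sigma}_n := \big(\sum_{k\le n}k^{-\delta}\big)^{-1}\sum_{k\le n}k^{-\delta}\,\widetilde M_k\otimes\widetilde M_k$ is $O(n^{-2\rho})$ in quadratic mean and $o\big((\ln n)^{\gamma}n^{-2\rho}\big)$ almost surely; since $s>(1+\alpha)/2$ forces $s-\alpha>1-s$ and $\alpha+s>1$ forces $\alpha>1-s$, one has $2\rho>1-s$, so this difference is negligible for the target rate.

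Next I would split $\widetilde{\Sigma}_n - \Sigma$. Put $\langle M\rangle_{k+1} := \sum_{j=1}^{k}a_j^2\,\mathbb{E}\!\left[\Xi_{j+1}\otimes\Xi_{j+1}\mid\mathcal{F}_j\right]$ and $P_k := M_{k+1}\otimes M_{k+1} - \langle M\rangle_{k+1}$; a direct computation shows $(P_k)$ is an $\mathcal{S}(H)$‑valued martingale with increments $\Delta P_j = M_j\otimes a_j\Xi_{j+1} + a_j\Xi_{j+1}\otimes M_j + a_j^2\big(\Xi_{j+1}\otimes\Xi_{j+1}-\mathbb{E}[\Xi_{j+1}\otimes\Xi_{j+1}\mid\mathcal{F}_j]\big)$. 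Then
\[
\widetilde{\Sigma}_n - \Sigma = \frac{1}{\sum_{k\le n}k^{-\delta}}\left(\sum_{k=1}^{n}\frac{k^{-\delta}}{b_k}\,P_k \;+\; \sum_{k=1}^{n}\frac{k^{-\delta}}{b_k}\big(\langle M\rangle_{k+1}-b_k\Sigma\big)\right) =: \frac{N_n+B_n}{\sum_{k\le n}k^{-\delta}}.
\]
The bias term $B_n$ is handled by Assumption \textbf{(A6b)}: since $\mathbb{E}[\Xi_{j+1}\otimes\Xi_{j+1}\mid\mathcal{F}_j] = \Gamma_m^{-1}\big(\mathbb{E}[\nabla_h g(X_{j+1},m_j)\otimes\nabla_h g(X_{j+1},m_j)\mid\mathcal{F}_j] - \Phi(m_j)\otimes\Phi(m_j)\big)\Gamma_m^{-1}$, Assumption \textbf{(A6b)} and $\|\Phi(m_j)\|\le C\|m_j-m\|$ give $\|\mathbb{E}[\Xi_{j+1}\otimes\Xi_{j+1}\mid\mathcal{F}_j] - \Sigma\|_F \lesssim \|m_j-m\| + \|m_j-m\|^2$; summing and inserting the rates (\ref{vitesseasrm}) and (\ref{vitlprm}) yields $\|\langle M\rangle_{k+1}-b_k\Sigma\|_F \lesssim b_k\,k^{-\alpha/2}$ in $L^1$ and almost surely, hence $\|B_n/\sum_{k\le n}k^{-\delta}\|_F^2 = O(n^{-\alpha})=o(n^{-(1-s)})$ because $\alpha>1-s$.

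The crux is the martingale term $N_n$, which is a weighted sum of the martingale \emph{values} $P_k$, not of increments. For the quadratic‑mean bound I would use $\mathbb{E}[\langle P_k,P_l\rangle_F]=\mathbb{E}[\|P_{k\wedge l}\|_F^2]$ to obtain $\mathbb{E}[\|N_n\|_F^2] \le 2\sum_{k\le n}k^{-\delta}b_k^{-1}\,\mathbb{E}[\|P_k\|_F^2]\sum_{l\ge k}l^{-\delta}b_l^{-1}$, then combine $\mathbb{E}[\|P_k\|_F^2]=\sum_{j\le k}\mathbb{E}[\|\Delta P_j\|_F^2]\lesssim\sum_{j\le k}a_j^2 b_j\lesssim b_k^2$ (using $\mathbb{E}[\|\Delta P_j\|_F^2\mid\mathcal{F}_j]\lesssim a_j^2\|M_j\|^2 + a_j^4$ and the fourth‑moment bound \textbf{(A5a')}) with $\sum_{l\ge k}l^{-\delta}b_l^{-1}\lesssim k^{s-\delta}b_k^{-1}$ (from the fast decay of $b_l^{-1}$), which leaves $\mathbb{E}[\|N_n\|_F^2]\lesssim\sum_{k\le n}k^{s-2\delta}\sim n^{1+s-2\delta}$; dividing by $(\sum_{k\le n}k^{-\delta})^2\sim c\,n^{2(1-\delta)}$ gives exactly $O(n^{s-1})=O(1/n^{1-s})$. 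For the almost sure bound, the fast decay of $b_k^{-1}$ lets me Abel‑transform $N_n = \sum_{j\le n}\omega_j\Delta P_j - \omega_{n+1}P_n$ with $\omega_j:=\sum_{k\ge j}k^{-\delta}b_k^{-1}\lesssim j^{s-\delta}b_j^{-1}$; the first term is now a genuine $\mathcal{S}(H)$‑valued martingale with predictable quadratic variation $\lesssim\sum_{j\le n}\omega_j^2 a_j^2 b_j\lesssim n^{1+s-2\delta}$ up to a power of $\ln n$, so a strong law for $\mathcal{S}(H)$‑valued martingales (as used in \cite{godichon2016}) gives $o\big((\ln n)^{\gamma}n^{1+s-2\delta}\big)$ a.s., while $\omega_{n+1}P_n$ is controlled by $\|P_n\|_F\lesssim\|M_{n+1}\|^2+b_n\lesssim b_n(\ln n)^{\gamma}$ a.s.; dividing by $\sum_{k\le n}k^{-\delta}$ gives $o\big((\ln n)^{\gamma}/n^{1-s}\big)$, in particular the stated $(\ln n)^{\delta}$ rate. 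Collecting the three pieces proves the Proposition; the ranges $s>(1+\alpha)/2$ and $s/2<\delta<(1+s)/2$ enter exactly to make $2\rho>1-s$, $\alpha>1-s$, and $1+s-2\delta>0$.

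The step I expect to be the main obstacle is the treatment of $N_n$: because it is a weighted average of martingale values rather than a martingale in $n$, neither the $L^2$ rate nor (above all) the almost sure rate follows from an off‑the‑shelf martingale inequality, and one must either sum the off‑diagonal covariances by hand via $\mathbb{E}[\langle P_k,P_l\rangle_F]=\mathbb{E}[\|P_{k\wedge l}\|_F^2]$ or perform the Abel transformation permitted by the super‑polynomial decay of $b_k^{-1}$; in both routes the bookkeeping rests on the sharp asymptotics $b_k\sim k^{s}a_k^2$ and on the conditional‑variance estimate $\mathbb{E}[\|\Delta P_j\|_F^2\mid\mathcal{F}_j]\lesssim a_j^2 b_j$. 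A secondary delicate point is that the smallness of the bias $B_n$ genuinely uses the local \emph{Lipschitz} bound \textbf{(A6b)} on $h\mapsto\mathbb{E}[\nabla_h g(X,h)\otimes\nabla_h g(X,h)]$, not merely its continuity at $m$.
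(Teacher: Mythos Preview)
Your argument is correct and rests on the same three ingredients as the paper's proof: (i) strip off the remainder terms $A_{1,k},A_{2,k}$ via the moment bounds of Lemma~\ref{lemtech} and the asymptotic $b_k\sim k^{s}a_k^{2}$; (ii) separate from $M_{k+1}\otimes M_{k+1}$ a bias $\langle M\rangle_{k+1}-b_k\Sigma$ controlled through \textbf{(A6b)} and the rates on $\|m_j-m\|$; (iii) exploit martingale structure for what remains, using the tail estimate $\sum_{l\ge k}l^{-\delta}b_l^{-1}\lesssim k^{s-\delta}b_k^{-1}$ so that the final sum is $\sum_{k\le n}k^{s-2\delta}\sim n^{1+s-2\delta}$, which after division by $\big(\sum k^{-\delta}\big)^{2}\sim n^{2(1-\delta)}$ yields $n^{s-1}$.

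The packaging of step (iii) differs. The paper expands $M_{k+1}\otimes M_{k+1}$ into a diagonal part $\sum_j a_j^{2}\,\Xi_{j+1}\otimes\Xi_{j+1}$, two ``past'' cross terms $\sum_j a_j\,\Xi_{j+1}\otimes M_j$ (and its transpose), and two ``future'' cross terms $\sum_j a_j\,\Xi_{j+1}\otimes(M_{k+1}-M_{j+1})$ (and its transpose), and then swaps the order of summation in $k$ and $j$ to recover martingale differences; the future terms force a rather heavy double-sum computation (the $(\star\star)$ block in the proof). Your route is tidier: you recognise directly that $P_k=M_{k+1}\otimes M_{k+1}-\langle M\rangle_{k+1}$ is an $\mathcal{S}(H)$-valued martingale, so its telescoping increments $\Delta P_j=a_j\Xi_{j+1}\otimes M_j+a_j M_j\otimes\Xi_{j+1}+a_j^{2}\big(\Xi_{j+1}\otimes\Xi_{j+1}-\mathbb{E}[\,\cdot\mid\mathcal{F}_j]\big)$ already absorb all cross terms at once. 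Your Abel transform $N_n=\sum_j\omega_j\Delta P_j-\omega_{n+1}P_n$ is then exactly the paper's summation swap written globally, while your $L^{2}$ route via $\mathbb{E}[\langle P_k,P_l\rangle_F]=\mathbb{E}[\|P_{k\wedge l}\|_F^{2}]$ is a genuine shortcut that bypasses the swap altogether; both reproduce the paper's rate with less bookkeeping.
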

\begin{proof}[Proof of Proposition \ref{vitsigmanbarre}]
Applying equality (\ref{normf}), one can check that 
\begin{align}
\notag \left\| \overline{\Sigma}_{n} - \Sigma \right\|_{F} & \leq \frac{1}{\sum_{k=1}^{n}k^{-\delta}}\sum_{k=1}^{n}\frac{1}{k^{\delta}b_{k}} \left\| A_{1,k} \right\|^{2} + \frac{1}{\sum_{k=1}^{n}k^{-\delta}}\sum_{k=1}^{n}\frac{1}{k^{\delta}b_{k}} \left\| A_{2,k} \right\|^{2} \\
\notag & + 2 \frac{1}{\sum_{k=1}^{n}k^{-\delta}}\sum_{k=1}^{n}\frac{1}{k^{\delta}b_{k}}\left\| A_{1,k} \right\| \left\| A_{2,k} \right\|  + 2\frac{1}{\sum_{k=1}^{n}k^{-\delta}}\sum_{k=1}^{n}\frac{1}{k^{\delta}b_{k}} \left\| A_{1,k} \right\| \left\| M_{k+1} \right\| \\
\label{majbourrinsigmanbarre}& + 2 \frac{1}{\sum_{k=1}^{n}k^{-\delta}}\sum_{k=1}^{n}\frac{1}{k^{\delta}b_{k}} \left\| A_{2,k} \right\| \left\| M_{k+1} \right\| + \left\| \frac{1}{\sum_{k=1}^{n}k^{-\delta}}\sum_{k=1}^{n}\frac{1}{k^{\delta}}\left( \frac{1}{b_{k}}M_{k+1}\otimes M_{k+1}  - \Sigma \right)\right\|_{F} ,
\end{align}
where $A_{1,k},A_{2,k},M_{k+1}$ are defined in (\ref{defiai}). The following Lemma gives the rate of convergence in quadratic mean of the first terms on the right-hand side of previous inequality.
\begin{lem}\label{lempleinmaj} Suppose Assumptions \textbf{(A1)} to \textbf{(A6b)} hold. Then, for all $i,j \in \left\lbrace 1,2 \right\rbrace $,
\begin{align*}
& \mathbb{E}\left[ \left( \frac{1}{\sum_{k=1}^{n}k^{-\delta}}\sum_{k=1}^{n}\frac{1}{k^{\delta}b_{k}} \left\| A_{i,k} \right\| \left\| A_{j,k} \right\| \right)^{2}\right] = o \left( \frac{1}{n^{1-s}} \right) , \\
& \mathbb{E}\left[ \left( \frac{1}{\sum_{k=1}^{n}k^{-\delta}}\sum_{k=1}^{n}\frac{1}{k^{\delta}b_{k}} \left\| A_{i,k} \right\| \left\| M_{k+1} \right\| \right)^{2}\right] = o \left( \frac{1}{n^{1-s}} \right) .
\end{align*}
\end{lem}
\noindent The proof of this lemma as well as its "almost sure version" are given in Appendix.

\medskip

Then, we just have to bound the last term on the right-hand side of inequality (\ref{majbourrinsigmanbarre}). First let us decompose $M_{k+1} \otimes M_{k+1}$ as
\begin{align*}
M_{k+1} \otimes M_{k+1} & = \sum_{j=1}^{k} a_{j}^{2} \Xi_{j+1} \otimes \Xi_{j+1} + \sum_{j=1}^{k} a_{j} \Xi_{j+1}\otimes M_{j}  + \sum_{j=1}^{k} a_{j} \Xi_{j+1}\otimes \left( M_{k+1} - M_{j+1} \right) \\
& + \sum_{j=1}^{k}a_{j} M_{j} \otimes \Xi_{j+1} + \sum_{j=1}^{k} a_{j} \left( M_{k+1} - M_{j+1} \right)\otimes \Xi_{j+1} .
\end{align*}
Note that for all $j$, $M_{j}$ is $\mathcal{F}_{j}$-measurable and $\mathbb{E}\left[ \Xi_{j+1} \otimes M_{j} |\mathcal{F}_{j} \right] = 0$. Moreover,
\begin{align*}
\frac{1}{\sum_{k=1}^{n}k^{-\delta}} & \sum_{k=1}^{n}\frac{1}{k^{\delta}}\left( \frac{1}{b_{k}}M_{k+1}\otimes M_{k+1}  - \Sigma \right)  = \frac{1}{\sum_{k=1}^{n}k^{-\delta}}\sum_{k=1}^{n}\frac{1}{k^{\delta}} \frac{1}{b_{k}}\sum_{j=1}^{k}a_{j}\left( \Xi_{j+1}\otimes \Xi_{j+1}    - \Sigma \right) \\
& + \frac{1}{\sum_{k=1}^{n}k^{-\delta}}\sum_{k=1}^{n}\frac{1}{k^{\delta}} \frac{1}{b_{k}}\sum_{j=1}^{k}a_{j}\Xi_{j+1}\otimes M_{j} + \frac{1}{\sum_{k=1}^{n}k^{-\delta}}\sum_{k=1}^{n}\frac{1}{k^{\delta}} \frac{1}{b_{k}}\sum_{j=1}^{k}a_{j} \xi_{j+1}\otimes \left( M_{k+1} - M_{j+1} \right) \\
& + \frac{1}{\sum_{k=1}^{n}k^{-\delta}}\sum_{k=1}^{n}\frac{1}{k^{\delta}} \frac{1}{b_{k}}\sum_{j=1}^{k}a_{j}M_{j}\otimes \Xi_{j+1} + \frac{1}{\sum_{k=1}^{n}k^{-\delta}}\sum_{k=1}^{n}\frac{1}{k^{\delta}} \frac{1}{b_{k}}\sum_{j=1}^{k}a_{j}\left( M_{k+1} - M_{j+1} \right) \otimes \Xi_{j+1} .
\end{align*}
The end of the proof consists in giving a bound of the quadratic mean of each term on the right-hand side of previous equality. Note that the almost sure rates of convergence are not proven since it is quite analogous.

\medskip

\textbf{Bounding $\mathbb{E}\left[ \left\| \frac{1}{\sum_{k=1}^{n}k^{-\delta}}\sum_{k=1}^{n}\frac{1}{k^{\delta}} \frac{1}{b_{k}}\sum_{j=1}^{k}a_{j}\Xi_{j+1}\otimes M_{j} \right\|_{F}^{2}\right]$.} First, note that
\[
\frac{1}{\sum_{k=1}^{n}k^{-\delta}}\sum_{k=1}^{n}\frac{1}{k^{\delta}} \frac{1}{b_{k}}\sum_{j=1}^{k}a_{j}\Xi_{j+1}\otimes M_{j} = \frac{1}{\sum_{k=1}^{n}k^{-\delta}}\sum_{k=1}^{n} \left( \sum_{j=k}^{n} \frac{1}{k^{\delta}}\frac{1}{b_{k}} \right) a_{k}\Xi_{k+1}\otimes M_{k}.
\]
Moreover, with the help of an integral test for convergence, one can check that there is a positive constant $C$ such that for all positive integers $k\leq n$,
\begin{equation}\label{sumpartbk}
 \sum_{j=k}^{n} \frac{1}{k^{\delta}}\frac{1}{b_{k}}  \leq \frac{C}{k^{\delta}}\exp \left( -\frac{k^{1-s}}{(1-s)} \right) .
\end{equation}
Furthermore, since $\left(  \Xi_{j+1} \otimes M_{j}   \right)_{j}$ is a sequence of martingale differences adapted to the filtration $\left( \mathcal{F}_{j} \right)$, let
\begin{align*}
(*) & := \mathbb{E}\left[ \left\| \frac{1}{\sum_{k=1}^{n}k^{-\delta}}\sum_{k=1}^{n}\frac{1}{k^{\delta}} \frac{1}{b_{k}}\sum_{j=1}^{k}a_{j}\Xi_{j+1}\otimes M_{j}\right\|_{F}^{2}\right] \\
& = \mathbb{E}\left[ \left\| \frac{1}{\sum_{k=1}^{n}k^{-\delta}}\sum_{k=1}^{n} \left( \sum_{j=k}^{n} \frac{1}{k^{\delta}}\frac{1}{b_{k}} \right) a_{k}\Xi_{k+1}\otimes M_{k} \right\|_{F}^{2}\right] \\
& = \left( \frac{1}{\sum_{k=1}^{n}k^{-\delta}}\right)^{2}\sum_{k=1}^{n}  \left( \sum_{j=k}^{n} \frac{1}{k^{\delta}}\frac{1}{b_{k}} \right)^{2} a_{k}^{2}\mathbb{E}\left[ \left\| \Xi_{k+1}\otimes M_{k} \right\|_{F}^{2}\right]
\end{align*}
Then, applying equality (\ref{normf}) and Cauchy-Schwarz's inequality,
\begin{align*}
(*) & \leq \left( \frac{1}{\sum_{k=1}^{n}k^{-\delta}}\right)^{2}\sum_{k=1}^{n} \left( \sum_{j=k}^{n} \frac{1}{k^{\delta}}\frac{1}{b_{k}} \right)^{2} a_{k}^{2}\mathbb{E}\left[ \left\| \Xi_{k+1} \right\|^{2} \left\| M_{k} \right\|^{2} \right] \\
& \leq \left( \frac{1}{\sum_{k=1}^{n}k^{-\delta}}\right)^{2}\sum_{k=1}^{n} \left( \sum_{j=k}^{n} \frac{1}{k^{\delta}}\frac{1}{b_{k}} \right)^{2} a_{k}^{2}\sqrt{\mathbb{E}\left[ \left\| \Xi_{k+1} \right\|^{4}\right] \mathbb{E}\left[ \left\| M_{k} \right\|^{4} \right]}.
\end{align*}
Finally, applying Lemmas \ref{lemtech} and \ref{lemmajxi} as well as inequality (\ref{sumpartbk}),
\begin{align*}
(*) = O \left( \left( \frac{1}{\sum_{k=1}^{n}k^{-\delta}}\right)^{2}\sum_{k=1}^{n} \frac{1}{k^{2\delta - s}} \right) = O \left( \frac{1}{n^{1-s}} \right) . 
\end{align*}  
With analogous calculus, one can check
\[
\mathbb{E}\left[ \left\| \frac{1}{\sum_{k=1}^{n}k^{-\delta}}\sum_{k=1}^{n}\frac{1}{k^{\delta}} \frac{1}{b_{k}}\sum_{j=1}^{k}a_{j}M_{j}\otimes \Xi_{j+1} \right\|_{F}^{2}\right]  = O \left( \frac{1}{n^{1-s}}\right) .
\] 

\medskip

\textbf{Bounding $\mathbb{E}\left[ \left\| \frac{1}{\sum_{k=1}^{n}k^{-\delta}}\sum_{k=1}^{n}\frac{1}{k^{\delta}} \frac{1}{b_{k}}\sum_{j=1}^{k}a_{j} \Xi_{j+1} \otimes \left( M_{k+1}-M_{j+1}\right)  \right\|_{F}^{2} \right]$.} First, note that
\begin{align*}
\sum_{j=1}^{k}a_{j} \Xi_{j+1} \otimes \left( M_{k+1} - M_{j} \right) & = \sum_{j=1}^{k}\sum_{j'=j+1}^{k} a_{j}a_{j'}\Xi_{j+1} \otimes \Xi_{j'+1} \\
& = \sum_{j'=2}^{k}\sum_{j=1}^{j'-1}a_{j}a_{j'} \Xi_{j+1}\otimes \Xi_{j'+1} .
\end{align*}
Note that $\left( \sum_{j=1}^{j'-1}a_{j}a_{j'} \Xi_{j+1}\otimes \Xi_{j'+1} \right)_{j'}$ is a sequence of martingale differences adapted to the filtration $\left( \mathcal{F}_{j'} \right)$. Furthermore, 
\begin{align*}
\mathbb{E} &\left[ \left\| \frac{1}{\sum_{k=1}^{n}k^{-\delta}}\sum_{k=1}^{n}\frac{1}{k^{\delta}} \frac{1}{b_{k}}\sum_{j=1}^{k}a_{j} \Xi_{j+1} \otimes \left( M_{k+1}-M_{j}\right)  \right\|_{F}^{2} \right] \\
& = \left( \frac{1}{\sum_{k=1}^{n}k^{-\delta}}\right)^{2}\sum_{k=1}^{n}\frac{1}{k^{2\delta}} \frac{1}{b_{k}^{2}}\mathbb{E}\left[ \left\| \sum_{j'=2}^{k}\sum_{j=1}^{j'-1}a_{j}a_{j'} \Xi_{j+1}\otimes \Xi_{j'+1}  \right\|_{F}^{2} \right] \\
& + \left( \frac{1}{\sum_{k=1}^{n}k^{-\delta}}\right)^{2}\mathbb{E}\left[ \sum_{k=2}^{n}\sum_{j=1}^{k-1}b_{k}^{-1}k^{-\delta}b_{j}^{-1}j^{-\delta} \left\langle \sum_{j''=2}^{j}\sum_{j'=1}^{j''-1}a_{j'}a_{j''}\Xi_{j'+1}\otimes \Xi_{j''+1} , \sum_{i''=2}^{k}\sum_{i'=1}^{i''-1}a_{i'}a_{i''}\Xi_{i'+1}\otimes \Xi_{i''+1} \right\rangle \right] .
\end{align*}
Then end of the proof consists in bounding the two terms on the right-hand side of previous equality. First, since $\left( \sum_{j=1}^{j'-1}a_{j}a_{j'} \Xi_{j+1}\otimes \Xi_{j'+1} \right)_{j'}$ is a sequence of martingale differences adapted to the filtration $\left( \mathcal{F}_{j'} \right)$, let
\begin{align*}
(\star ) & := \left( \frac{1}{\sum_{k=1}^{n}k^{-\delta}}\right)^{2}\sum_{k=1}^{n}\frac{1}{k^{2\delta}} \frac{1}{b_{k}^{2}}\mathbb{E}\left[ \left\| \sum_{j'=2}^{k}\sum_{j=1}^{j'-1}a_{j}a_{j'} \Xi_{j+1}\otimes \Xi_{j'+1}  \right\|_{F}^{2} \right] \\
& = \left( \frac{1}{\sum_{k=1}^{n}k^{-\delta}}\right)^{2}\sum_{k=1}^{n}\frac{1}{k^{2\delta}} \frac{1}{b_{k}^{2}} \sum_{j'=2}^{k}\mathbb{E}\left[ \left\|\sum_{j=1}^{j'-1}a_{j}a_{j'} \Xi_{j+1}\otimes \Xi_{j'+1}  \right\|_{F}^{2} \right] .
\end{align*}
Then, applying equality (\ref{normf}) and Cauchy-Schwarz's inequality,
\begin{align*}
(\star ) & = \left( \frac{1}{\sum_{k=1}^{n}k^{-\delta}}\right)^{2}\sum_{k=1}^{n}\frac{1}{k^{2\delta}} \frac{1}{b_{k}^{2}} \sum_{j'=2}^{k}a_{j'}^{2}\mathbb{E}\left[ \left\|\sum_{j=1}^{j'-1}a_{j} \Xi_{j+1}\right\|^{2} \left\| \Xi_{j'+1}  \right\|^{2} \right] \\
& \leq \left( \frac{1}{\sum_{k=1}^{n}k^{-\delta}}\right)^{2}\sum_{k=1}^{n}\frac{1}{k^{2\delta}} \frac{1}{b_{k}^{2}} \sum_{j'=2}^{k}a_{j'}^{2}\sqrt{\mathbb{E}\left[ \left\| \Xi_{j'+1}  \right\|^{4} \right]}\sqrt{ \mathbb{E}\left[ \left\|\sum_{j=1}^{j'-1}a_{j} \Xi_{j+1}\right\|^{4}\right]  }
\end{align*}
Finally, applying Lemma \ref{lemmajxi}, \ref{lemsumexp} and \ref{lemtech},
\begin{align*}
(\star ) & = O \left( \left( \frac{1}{\sum_{k=1}^{n}k^{-\delta}}\right)^{2}\sum_{k=1}^{n}\frac{1}{k^{2\delta}} \frac{1}{b_{k}^{2}} \sum_{j'=2}^{k}a_{j'}^{4}j'^{s}    \right)   = O \left( \left( \frac{1}{\sum_{k=1}^{n}k^{-\delta}}\right)^{2}\sum_{k=1}^{n}\frac{1}{k^{2\delta}} \frac{1}{b_{k}^{2}}a_{k}^{4}k^{2s} \right) = O \left( \frac{1}{n^{\min \left\lbrace 2-2\delta , 1\right\rbrace }} \right) .
\end{align*}
Then, since $\delta < (1+s) /2$,
\[
\left( \frac{1}{\sum_{k=1}^{n}k^{-\delta}}\right)^{2}\sum_{k=1}^{n}\frac{1}{k^{2\delta}} \frac{1}{b_{k}^{2}}\mathbb{E}\left[ \left\| \sum_{j'=2}^{k}\sum_{j=1}^{j'-1}a_{j}a_{j'} \Xi_{j+1}\otimes \Xi_{j'+1}  \right\|_{F}^{2} \right] = o \left( \frac{1}{n^{1-s}} \right) .
\]
In the same way, by linearity, let
\begin{align*}
 & (\star \star )  := \left( \frac{1}{\sum_{k=1}^{n}k^{-\delta}}\right)^{2}\mathbb{E}\left[ \sum_{k=2}^{n}\sum_{j=1}^{k-1}b_{k}^{-1}k^{-\delta}b_{j}^{-1}j^{-\delta} \left\langle \sum_{j''=2}^{j}\sum_{j'=1}^{j''-1}a_{j'}a_{j''}\Xi_{j'+1}\otimes \Xi_{j''+1} , \sum_{i''=2}^{k}\sum_{i'=1}^{i''-1}a_{i'}a_{i''}\Xi_{i'+1}\otimes \Xi_{i''+1} \right\rangle \right]  \\
& = \left( \frac{1}{\sum_{k=1}^{n}k^{-\delta}}\right)^{2}\sum_{k=2}^{n}\sum_{j=1}^{k-1}b_{k}^{-1}k^{-\delta}b_{j}^{-1}j^{-\delta}\mathbb{E}\left[  \left\langle \sum_{j''=2}^{j}\sum_{j'=1}^{j''-1}a_{j'}a_{j''}\Xi_{j'+1}\otimes \Xi_{j''+1} , \sum_{i''=2}^{j}\sum_{i'=1}^{i''-1}a_{i'}a_{i''}\Xi_{i'+1}\otimes \Xi_{i''+1} \right\rangle_{F} \right] \\
& + \left( \frac{1}{\sum_{k=1}^{n}k^{-\delta}}\right)^{2}\sum_{k=2}^{n}\sum_{j=1}^{k-1}b_{k}^{-1}k^{-\delta}b_{j}^{-1}j^{-\delta} \mathbb{E}\left[ \left\langle \sum_{j''=2}^{j}\sum_{j'=1}^{j''-1}a_{j'}a_{j''}\Xi_{j'+1}\otimes \Xi_{j''+1} , \sum_{i''=j+1}^{k}\sum_{i'=1}^{i''-1}a_{i'}a_{i''}\Xi_{i'+1}\otimes \Xi_{i''+1} \right\rangle_{F} \right] . 
\end{align*}
Since $\left( \Xi_{i''} \right)$ is a sequence of martingale differences adapted to the filtration $\left( \mathcal{F}_{i''} \right)$, 
\begin{align*}
\sum_{k=2}^{n} & \sum_{j=1}^{k-1}b_{k}^{-1}k^{-\delta}b_{j}^{-1}j^{-\delta} \mathbb{E}\left[ \left\langle \sum_{j''=2}^{j}\sum_{j'=1}^{j''-1}a_{j'}a_{j''}\Xi_{j'+1}\otimes \Xi_{j''+1} , \sum_{i''=j+1}^{k}\sum_{i'=1}^{i''-1}a_{i'}a_{i''}\Xi_{i'+1}\otimes \Xi_{i''+1} \right\rangle_{F} \right]  \\
& = \sum_{k=2}^{n} \sum_{j=1}^{k-1}b_{k}^{-1}k^{-\delta}b_{j}^{-1}j^{-\delta}\sum_{j''=2}^{j}\sum_{j'=1}^{j''-1}\sum_{i''=j+1}^{k}\sum_{i'=1}^{i''-1}a_{i'}a_{i''}a_{j'}a_{j''} \mathbb{E}\left[ \left\langle \Xi_{j'+1}\otimes \Xi_{j''+1} , \Xi_{i'+1}\otimes \Xi_{i''+1} \right\rangle_{F} \right] \\
& = \sum_{k=2}^{n} \sum_{j=1}^{k-1}b_{k}^{-1}k^{-\delta}b_{j}^{-1}j^{-\delta}\sum_{j''=2}^{j}\sum_{j'=1}^{j''-1}\sum_{i''=j+1}^{k}\sum_{i'=1}^{i''-1}a_{i'}a_{i''}a_{j'}a_{j''} \mathbb{E}\left[ \left\langle \Xi_{j'+1}\otimes \Xi_{j''+1} , \Xi_{i'+1}\otimes \mathbb{E}\left[ \Xi_{i''+1} |\mathcal{F}_{i''} \right] \right\rangle_{F} \right] \\
& = 0 .
\end{align*}
Furthermore, since $\left( \sum_{j''=2}^{j}\sum_{j'=1}^{j''-1}a_{j'}a_{j''}\Xi_{j'+1}\otimes \Xi_{j''+1} \right)_{j''}$ is a sequence of martingale differences adapted to the filtration $\left( \mathcal{F}_{j''} \right)$ and applying equality (\ref{normf}),
\begin{align*}
(\star \star ) & = \left( \frac{1}{\sum_{k=1}^{n}k^{-\delta}}\right)^{2}\sum_{k=2}^{n}\sum_{j=1}^{k}b_{k}^{-1}k^{-\delta}b_{j}^{-1}j^{-\delta} \mathbb{E}\left[ \left\|  \sum_{j''=1}^{j}\sum_{j'=1}^{j''-1}a_{j'}a_{j''}\Xi_{j'+1}\otimes \Xi_{j''+1} \right\|_{F}^{2} \right] \\
& = \left( \frac{1}{\sum_{k=1}^{n}k^{-\delta}}\right)^{2}\sum_{k=2}^{n}\sum_{j=1}^{k}b_{k}^{-1}k^{-\delta}b_{j}^{-1}j^{-\delta}   \sum_{j''=1}^{j} \mathbb{E}\left[ \left\| \sum_{j'=1}^{j''-1}a_{j'}a_{j''}\Xi_{j'+1}\otimes \Xi_{j''+1} \right\|_{F}^{2} \right] \\
& = \left( \frac{1}{\sum_{k=1}^{n}k^{-\delta}}\right)^{2}\sum_{k=2}^{n}\sum_{j=1}^{k}b_{k}^{-1}k^{-\delta}b_{j}^{-1}j^{-\delta}   \sum_{j''=1}^{j}a_{j''}^{2} \mathbb{E}\left[ \left\| \sum_{j'=1}^{j''-1}a_{j'}\Xi_{j'+1}\right\|^{2} \left\| \Xi_{j''+1} \right\|^{2} \right] .
\end{align*}
Applying Cauchy-Schwarz's inequality as well as Lemmas \ref{lemmajxi} and \ref{lemtech},
\begin{align*}
(\star \star ) & \leq \left( \frac{1}{\sum_{k=1}^{n}k^{-\delta}}\right)^{2}\sum_{k=1}^{n}\sum_{j=1}^{k}b_{k}^{-1}k^{-\delta}b_{j}^{-1}j^{-\delta}   \sum_{j''=1}^{j}a_{j''}^{2} \sqrt{\mathbb{E}\left[ \left\| \sum_{j'=1}^{j''-1}a_{j'}\Xi_{j'+1}\right\|_{F}^{4} \right] \mathbb{E}\left[ \left\| \Xi_{j''+1} \right\|_{F}^{4} \right] } \\
& = O \left( \left( \frac{1}{\sum_{k=1}^{n}k^{-\delta}}\right)^{2}\sum_{k=1}^{n}\sum_{j=1}^{k}b_{k}^{-1}k^{-\delta}b_{j}^{-1}j^{-\delta}\sum_{j''=1}^{j}a_{j''}^{4}j''^{s} \right) . 
\end{align*}
Finally, applying Lemma \ref{lemsumexp},
\begin{align*}
(\star \star ) & = O \left( \left( \frac{1}{\sum_{k=1}^{n}k^{-\delta}}\right)^{2}\sum_{k=1}^{n}\sum_{j=1}^{k}b_{k}^{-1}k^{-\delta}b_{j}^{-1}j^{-\delta} a_{j}^{4}j^{2s} \right) \\
& = O \left( \left( \frac{1}{\sum_{k=1}^{n}k^{-\delta}}\right)^{2}\sum_{k=1}^{n}b_{k}^{-1}k^{-2\delta}k^{2s}a_{k}^{2}\right) \\
& = O \left( \frac{1}{n^{1-s}}    \right) .
\end{align*}
Thus, 
\[
\mathbb{E}\left[ \left\| \frac{1}{\sum_{k=1}^{n}k^{-\delta}}\sum_{k=1}^{n}\frac{1}{k^{\delta}} \frac{1}{b_{k}}\sum_{j=1}^{k}a_{j} \Xi_{j+1} \otimes \left( M_{k+1}-M_{j+1}\right)\right\|_{F}^{2} \right] = O \left( \frac{1}{n^{1-s}}\right) .
\]
Moreover, with analogous calculus, one can check
\[
\mathbb{E}\left[ \left\| \frac{1}{\sum_{k=1}^{n}k^{-\delta}}\sum_{k=1}^{n}\frac{1}{k^{\delta}} \frac{1}{b_{k}}\sum_{j=1}^{k}a_{j} \left( M_{k+1}-M_{j+1}\right) \otimes \Xi_{j+1} \right\|_{F}^{2} \right] = O \left( \frac{1}{n^{1-s}}\right) .
\]

\medskip

\textbf{Bounding $\frac{1}{\sum_{k=1}^{n}k^{-\delta}} \sum_{k=1}^{n} \frac{1}{k^{\delta}b_{k}} \sum_{j=1}^{k}a_{k}^{2} \left( \Xi_{k+1} \otimes \Xi_{k+1} - \Sigma \right)$.} First , note that
\begin{align*}
\frac{1}{\sum_{k=1}^{n}k^{-\delta}} \sum_{k=1}^{n} \frac{1}{k^{\delta}b_{k}} \sum_{j=1}^{k}a_{k}^{2} \left( \Xi_{k+1} \otimes \Xi_{k+1} - \Sigma \right) & = \frac{1}{\sum_{k=1}^{n}k^{-\delta}} \sum_{k=1}^{n} \frac{1}{k^{\delta}b_{k}} \sum_{j=1}^{k}a_{k}^{2} \left( \mathbb{E}\left[ \Xi_{k+1} \otimes \Xi_{k+1} |\mathcal{F}_{k} \right] - \Sigma \right) \\
& + \frac{1}{\sum_{k=1}^{n}k^{-\delta}} \sum_{k=1}^{n} \frac{1}{k^{\delta}b_{k}} \sum_{j=1}^{k}a_{k}^{2} \left( \Xi_{k+1} \otimes \Xi_{k+1} - \mathbb{E}\left[ \Xi_{k+1} \otimes \Xi_{k+1} |\mathcal{F}_{k} \right] \right)
\end{align*}
The end of the proof consists in bounding the quadratic mean of the terms on the right-hand side of previous equality. First, applying Lemma \ref{lemsum}, let 
\begin{align*}
(\star ) & := \mathbb{E}\left[ \left\| \frac{1}{\sum_{k=1}^{n}k^{-\delta}} \sum_{k=1}^{n} \frac{1}{k^{\delta}b_{k}} \sum_{j=1}^{k}a_{j}^{2} \left( \mathbb{E}\left[ \Xi_{k+1} \otimes \Xi_{k+1} |\mathcal{F}_{k} \right] - \Sigma \right) \right\|_{F}^{2} \right] \\
& \leq \left( \frac{1}{\sum_{k=1}^{n}k^{-\delta}}\right)^{2} \left( \sum_{k=1}^{n} \frac{1}{k^{\delta}b_{k}} \sqrt{\mathbb{E}\left[ \left\| \sum_{j=1}^{k}a_{j}^{2} \left( \mathbb{E}\left[ \Xi_{k+1} \otimes \Xi_{k+1}|\mathcal{F}_{k} \right] - \Sigma \right) \right\|_{F}^{2}\right]} \right)^{2} \\
& \leq \left( \frac{1}{\sum_{k=1}^{n}k^{-\delta}}\right)^{2} \left( \sum_{k=1}^{n} \frac{1}{k^{\delta}b_{k}}  \sum_{j=1}^{k} a_{j}^{2} \sqrt{\mathbb{E}\left[ \left\| \mathbb{E}\left[ \Xi_{k+1} \otimes \Xi_{k+1}|\mathcal{F}_{k} \right] - \Sigma   \right\|_{F}^{2}\right]} \right)^{2}
\end{align*} 
Then, applying inequality (\ref{vitlprm}) and Corollary \ref{lemmajxisigma},
\begin{align*}
( \star ) & = O \left(  \left( \frac{1}{\sum_{k=1}^{n}k^{-\delta}}\right)^{2} \left( \sum_{k=1}^{n} \frac{1}{k^{\delta}b_{k}}  \sum_{j=1}^{k} a_{j}^{2} \sqrt{\mathbb{E}\left[ \left\| m_{n}-m  \right\|^{2} \right]} \right)^{2} \right) \\
& = O \left( \left( \frac{1}{\sum_{k=1}^{n}k^{-\delta}}\right)^{2} \left( \sum_{k=1}^{n} \frac{1}{k^{\delta}b_{k}}  \sum_{j=1}^{k} a_{j}^{2} j^{-\alpha /2} \right)^{2} \right) .
\end{align*}
Furthermore, thanks to Lemma \ref{lemsumexp},
\begin{align*}
(\star ) &  = O \left( \left( \frac{1}{\sum_{k=1}^{n}k^{-\delta}}\right)^{2} \left( \sum_{k=1}^{n} \frac{1}{k^{\delta}b_{k}}  a_{k}^{2}k^{s-\alpha /2} \right)^{2} \right)  = O \left( \left( \frac{1}{\sum_{k=1}^{n}k^{-\delta}}\right)^{2} n^{2 -2\delta - \alpha}  \right)  = O \left( \frac{1}{n^{\alpha}} \right) .
\end{align*}
Thus, since $\alpha > 1/2$,
\[
\mathbb{E}\left[ \left\| \frac{1}{\sum_{k=1}^{n}k^{-\delta}} \sum_{k=1}^{n} \frac{1}{k^{\delta}b_{k}} \sum_{j=1}^{k}a_{j}^{2} \left( \mathbb{E}\left[ \Xi_{k+1} \otimes \Xi_{k+1} |\mathcal{F}_{k} \right] - \Sigma \right) \right\|_{F}^{2} \right] = o \left( \frac{1}{n^{1-s}}\right) .
\]

\medskip

\noindent Moreover, applying Lemma \ref{lemsum}, let
\begin{align*}
(\star \star ) & := \mathbb{E}\left[ \left\| \frac{1}{\sum_{k=1}^{n}k^{-\delta}} \sum_{k=1}^{n} \frac{1}{k^{\delta}b_{k}} \sum_{j=1}^{k}a_{j}^{2} \left( \Xi_{k+1} \otimes \Xi_{k+1} - \mathbb{E}\left[ \Xi_{k+1} \otimes \Xi_{k+1} |\mathcal{F}_{k} \right] \right) \right\|_{F}^{2} \right] \\
& \leq \left( \frac{1}{\sum_{k=1}^{n}k^{-\delta}}\right)^{2}\ \left( \sum_{k=1}^{n} \frac{1}{k^{\delta}b_{k}} \sqrt{\mathbb{E}\left[ \left\| \sum_{j=1}^{k}a_{j}^{2} \left( \Xi_{k+1} \otimes \Xi_{k+1} - \mathbb{E}\left[ \Xi_{k+1} \otimes \Xi_{k+1} |\mathcal{F}_{k} \right] \right) \right\|_{F}^{2} \right]} \right)^{2}.
\end{align*}
Furthermore, since $\left( \mathbb{E}\left[ \Xi_{k+1} \otimes \Xi_{k+1} |\mathcal{F}_{k} \right] - \Xi_{k+1}\otimes \Xi_{k+1} \right)$ is a sequence of martingale differences adapted to the filtration $\left( \mathcal{F}_{k} \right)$ and applying Lemma \ref{lemmajxi},
\begin{align*}
( \star \star ) & \leq \left( \frac{1}{\sum_{k=1}^{n}k^{-\delta}}\right)^{2}\ \left( \sum_{k=1}^{n} \frac{1}{k^{\delta}b_{k}} \sqrt{ \sum_{j=1}^{k}a_{j}^{4} \mathbb{E}\left[ \left\| \left( \Xi_{k+1} \otimes \Xi_{k+1} - \mathbb{E}\left[ \Xi_{k+1} \otimes \Xi_{k+1} |\mathcal{F}_{k} \right] \right) \right\|_{F}^{2} \right]} \right)^{2} \\
& = O \left( \left( \frac{1}{\sum_{k=1}^{n}k^{-\delta}}\right)^{2}\ \left( \sum_{k=1}^{n} \frac{1}{k^{\delta}b_{k}} \sqrt{ \sum_{j=1}^{k}a_{j}^{4} } \right)^{2} \right) .
\end{align*}
Then, applying Lemma \ref{lemsumexp},
\begin{align*}
(\star \star ) & =  O \left( \left( \frac{1}{\sum_{k=1}^{n}k^{-\delta}}\right)^{2}\ \left( \sum_{k=1}^{n} \frac{1}{k^{\delta}b_{k}} a_{k}^{2}k^{s/2}  \right)^{2} \right) =  O \left( \left( \frac{1}{\sum_{k=1}^{n}k^{-\delta}}\right)^{2}\ \left( \sum_{k=1}^{n} k^{- \delta -s/2}  \right)^{2} \right)  = O \left( \frac{1}{n^{2-s}}\right) .
\end{align*}
Finally,
\[ 
\mathbb{E}\left[ \left\| \frac{1}{\sum_{k=1}^{n}k^{-\delta}} \sum_{k=1}^{n} \frac{1}{k^{\delta}b_{k}} \sum_{j=1}^{k}a_{j}^{2} \left( \Xi_{k+1} \otimes \Xi_{k+1} - \mathbb{E}\left[ \Xi_{k+1} \otimes \Xi_{k+1} |\mathcal{F}_{k} \right] \right) \right\|_{F}^{2} \right]   = o \left( \frac{1}{n^{1-s}} \right) ,
\]
which concludes the proof.
\end{proof}

\begin{appendix}

\section{Proof of Theorem \ref{tlcgrad}}
Let us recall that the Robbins-Monro algorithm can be written for all $n \geq 1$ as (see (\ref{decbeta}))
\begin{equation*}
 m_{n} - m = \beta_{n-1} \left( m_{1} - m \right)  - \beta_{n-1} \sum_{k=1}^{n-1}\gamma_{k} \beta_{k}^{-1}\delta_{k} + \beta_{n-1} \sum_{k=1}^{n-1}\gamma_{k} \beta_{k}^{-1}\xi_{k+1} .
\end{equation*}
It was proven in \cite{godichon2016} that under assumptions \textbf{(A1)} to \textbf{(A5a)}, for all $\gamma > 0$,
\begin{align*}
\frac{1}{\sqrt{\gamma_{n}}}\left\| \beta_{n-1} \left( m_{1} - m \right)  - \beta_{n-1} \sum_{k=1}^{n-1} \gamma_{k}\beta_{k}^{-1}\delta_{k} \right\| & = O \left( \frac{\left\| m_{n} - m \right\|^{2}}{\sqrt{\gamma_{n}}} \right) \quad a.s , \\
& = o \left( \frac{ ( \ln n )^{\gamma}}{n^{\alpha /2}} \right) \quad a.s.
\end{align*}
Then, we just have to apply Theorem 5.1 in \cite{Jak88} to the last term on the right-hand side of equality (\ref{decbeta}). More precisely, let $\left( e_{i}\right)_{i \in I}$ be an orthonormal basis of $H$ composed of eigenvectors of $\Gamma_{m}$ and let $\psi_{i,j}' := \left\langle \Sigma_{RM}e_{i} , e_{j} \right\rangle$ for all $i,j \in I$, we have to prove that the following equalities are verified.
\begin{equation}
\label{in1rm} \forall \eta > 0, \quad \lim_{n \to \infty} \mathbb{P}\left( \sup_{1\leq k \leq n } \frac{1}{\sqrt{\gamma_{n}}} \left\| \beta_{n}\beta_{k}^{-1}\gamma_{k}\xi_{k+1} \right\| > \eta \right) = 0 ,
\end{equation}
\begin{equation}
\label{in2rm} \lim_{n \to \infty}\frac{1}{\gamma_{n}}\sum_{k=1}^{n} \left\langle \beta_{n}  \beta_{k}^{-1}\gamma_{k}\xi_{k+1} , e_{i} \right\rangle \left\langle \beta_{n}  \beta_{k}^{-1}\gamma_{k}\xi_{k+1} , e_{j} \right\rangle = \psi_{i,j}' \quad a.s, \quad \forall i,j \in I ,
\end{equation}
\begin{equation}\label{in3rm}
\forall \epsilon > 0 , \quad \lim_{N \to \infty } \limsup_{n \to \infty} \mathbb{P} \left( \frac{1}{\gamma_{n}}\sum_{k=1}^{n} \sum_{j=N}^{\infty} \left\langle \beta_{n} \beta_{k}^{-1}\gamma_{k}\xi_{k+1} , e_{j} \right\rangle ^{2} > \epsilon \right) = 0.
\end{equation}

\medskip

\textbf{Proof of (\ref{in1rm}).} Let $\eta > 0$, applying Markov's inequality, 
\begin{align*}
\mathbb{P}\left( \sup_{1\leq k \leq n} \frac{1}{\sqrt{\gamma_{n}}}\left\| \beta_{n}\beta_{k}^{-1}\gamma_{k} \xi_{k+1} \right\| > \eta \right) & \leq \sum_{k=1}^{n} \mathbb{P}\left( \frac{1}{\sqrt{\gamma_{n}}}\left\| \beta_{n} \beta_{k}^{-1} \gamma_{k} \xi_{k+1} \right\| > \eta \right) \\ & \leq \frac{1}{\eta^{4} \gamma_{n}^{2}} \sum_{k=1}^{n} \mathbb{E}\left[  \left\| \beta_{n} \beta_{k}^{-1} \gamma_{k}\xi_{k+1} \right\|^{4} \right] .
\end{align*}
First, since each eigenvalue $\lambda$ of $\Gamma_{m}$ verifies $0 < \lambda_{\min} \leq \lambda \leq C$, there is a rank $n_{\alpha}$ such that for all positive integer $k,n$ verifying $n_{\alpha} \leq k \leq n$,
\begin{align}\label{majbeta}
\left\| \beta_{n}\beta_{k}^{-1} \right\|_{op} \leq \prod_{j=k+1}^{n}\left\| I_{H} - \gamma_{j}\Gamma_{m} \right\|_{op} \leq \prod_{j=k+1}^{n}\left( 1- \gamma_{j} \lambda_{\min} \right) \leq \exp \left( - \lambda_{\min}\sum_{j=k+1}^{n} \gamma_{j} \right) .  
\end{align}
For the sake of simplicity, we consider from now that $n_{\alpha} = 1$ (one can see the proof of Lemma 3.1 in \cite{CCG2015} for an analogous and more detailed proof). Then, applying Lemmas \ref{lemmajxi} and \ref{sumexppart}, there is a positive constant $C$ such that for all $n \geq 1$,
\begin{align*}
\sum_{k=1}^{n-1} \mathbb{E}\left[  \left\| \beta_{n} \beta_{k}^{-1} \gamma_{k}\xi_{k+1} \right\|^{4} \right] &  \leq \sum_{k=1}^{n}\left\| \beta_{n}\beta_{k}^{-1} \right\|_{op}^{4}\gamma_{k}^{4}\mathbb{E}\left[ \left\| \xi_{k+1} \right\|^{4} \right]  \\
& \leq C \sum_{k=1}^{n} \exp \left( - 4 \lambda_{\min}\sum_{j=k+1}^{n}\gamma_{j} \right) \gamma_{k}^{4} \\
& = O \left( \gamma_{n}^{3}\right) ,
\end{align*} 
which concludes the proof of (\ref{in1rm}).

\medskip

\textbf{Proof of (\ref{in2rm}).} Since
\[
\sum_{k=1}^{n} \left\langle \beta_{n}\beta_{k}^{-1}\gamma_{k}\xi_{k+1},e_{i} \right\rangle  \left\langle \beta_{n}\beta_{k}^{-1}\gamma_{k}\xi_{k+1},e_{j} \right\rangle  = \sum_{k=1}^{n} \left\langle \left( \beta_{n}\beta_{k}^{-1}\gamma_{k}\xi_{k+1}\right) \otimes \left( \beta_{n}\beta_{k}^{-1}\gamma_{k}\xi_{k+1}\right) \left( e_{i} \right) , e_{j} \right\rangle ,
\]
we just have to prove that
\begin{equation}
\label{convbeta} \lim_{n \to \infty} \left\| \frac{1}{\gamma_{n}} \sum_{k=1}^{n} \left( \beta_{n}\beta_{k}^{-1}\gamma_{k}\xi_{k+1}\right) \otimes \left( \beta_{n}\beta_{k}^{-1}\gamma_{k}\xi_{k+1}\right) - \Sigma_{RM} \right\|_{F} = 0 \quad a.s.
\end{equation}
 First, note that by linearity
\begin{align*}
\sum_{k=1}^{n} \left( \beta_{n}\beta_{k}^{-1}\gamma_{k}\xi_{k+1} \right) \otimes \left( \beta_{n}\beta_{k}^{-1}\gamma_{k}\xi_{k+1} \right) & = \sum_{k=1}^{n}\left( \beta_{n}\beta_{k}^{-1}\gamma_{k}\right) \left(  \xi_{k+1} \otimes \xi_{k+1}\right)\left(\beta_{n}\beta_{k}^{-1}\gamma_{k} \right) \\
& = \sum_{k=1}^{n}\left( \beta_{n}\beta_{k}^{-1}\gamma_{k}\right)  \mathbb{E}\left[ \xi_{k+1} \otimes \xi_{k+1} |\mathcal{F}_{k} \right] \left(\beta_{n}\beta_{k}^{-1}\gamma_{k} \right) \\
& + \sum_{k=1}^{n}\left( \beta_{n}\beta_{k}^{-1}\gamma_{k}\right)\epsilon_{k+1}\left(\beta_{n}\beta_{k}^{-1}\gamma_{k} \right) ,
\end{align*}
with $\epsilon_{k+1} = \xi_{k+1} \otimes \xi_{k+1} - \mathbb{E}\left[ \xi_{k+1} \otimes \xi_{k+1} |\mathcal{F}_{k} \right]$. Note that $\left( \epsilon_{k} \right)$ is a sequence of martingale differences adapted to the filtration $\left( \mathcal{F}_{k} \right)$. We now prove that the two last terms on the right-hand side of previous equality converge almost surely to $0$. First, as in \cite{godichon2016} and \cite{CCG2015}, one can check that 
\[
\lim_{n\to \infty}\frac{1}{\gamma_n}\left\| \sum_{k=1}^{n} \left( \beta_{n}\beta_{k}^{-1}\gamma_{k}\right)\epsilon_{k+1}\left(\beta_{n}\beta_{k}^{-1}\gamma_{k} \right) \right\|_{F} = 0 \quad a.s.
\]
Let us now rewrite $ \mathbb{E}\left[ \xi_{k+1} \otimes \xi_{k+1} |\mathcal{F}_{k} \right]$  as 
\begin{align*}
\mathbb{E}\left[ \xi_{k+1} \otimes \xi_{k+1} |\mathcal{F}_{k} \right] = \Sigma ' + \left( \mathbb{E}\left[ \nabla_{h}g \left( X_{k+1} , m_{k} \right) \otimes \nabla_{h}g \left( X_{k+1} , m_{k} \right) |\mathcal{F}_{k} \right] - \Sigma' \right) - \Phi \left( m_{k}\right) \otimes \Phi \left( m_{k} \right) .
\end{align*}
Then, let
\begin{align*}
(*)  & := \frac{1}{\gamma_{n}}\left\| \sum_{k=1}^{n} \left( \beta_{n} \beta_{k}^{-1}\gamma_{k} \right) \left( \Phi (m_{k} ) \otimes \Phi \left( m_{k} \right) \right) \left( \beta_{n} \beta_{k}^{-1}\gamma_{k} \right) \right\|_{F} \\
 &  \leq \frac{1}{\gamma_{n}} \sum_{k=1}^{n} \left\| \left( \beta_{n} \beta_{k}^{-1}\gamma_{k} \right) \left( \Phi (m_{k} ) \otimes \Phi \left( m_{k} \right) \right) \left( \beta_{n} \beta_{k}^{-1}\gamma_{k} \right) \right\|_{F} \\
 & \leq \frac{1}{\gamma_{n}} \sum_{k=1}^{n} \left\| \beta_{n} \beta_{k}^{-1}\gamma_{k} \Phi \left( m_{k} \right) \right\|^{2} .
\end{align*}
Moreover, since there is a positive constant $C$ such that for all $n \geq 1$, $\left\| \Phi (m_{n} ) \right\| \leq C\| m_{n} - m \|$,
\begin{align*}
(*) & \leq \frac{1}{\gamma_{n}}\sum_{k=1}^{n} \left\| \beta_{n} \beta_{k}^{-1} \right\|_{op}^{2}\gamma_{k}^{2} \left\| \Phi \left( m_{k} \right) \right\|^{2}  \leq  \frac{1}{\gamma_{n}}\sum_{k=1}^{n} \left\| \beta_{n} \beta_{k}^{-1} \right\|_{op}^{2}\gamma_{k}^{2} C^{2} \left\| m_{k}-m \right\|^{2}.
\end{align*}
Thus, applying inequalities (\ref{vitesseasrm}) and (\ref{majbeta}) as well as Lemma \ref{sumexppart}, for all $\beta < \alpha$,
\begin{align*}
\frac{1}{\gamma_{n}}\sum_{k=1}^{n} \left\| \beta_{n}\beta_{k}^{-1} \right\|_{op}^{2}\gamma_{k}^{2}C^{2} \left\| m_{k} - m \right\|^{2} & = o \left( \frac{1}{\gamma_{n}}\sum_{k=1}^{n} \exp \left( - 2\lambda_{\min} \sum_{j=k+1}^{n}\gamma_{j} \right)\gamma_{k}^{2}\frac{1}{k^{\beta}}\right)   = o \left( \frac{1}{n^{\beta}} \right) . 
\end{align*}
\medskip
In the same way,
\begin{align*}
\frac{1}{\gamma_{n}} &  \left\| \sum_{k=1}^{n} \left( \beta_{n} \beta_{k}^{-1}\gamma_{k} \right)\left( \mathbb{E}\left[ \nabla_{h}g \left( X_{k+1} , m_{k} \right) \otimes \nabla_{h}g \left( X_{k+1} , m_{k} \right) |\mathcal{F}_{k} \right] - \Sigma' \right) \left( \beta_{n} \beta_{k}^{-1}\gamma_{k} \right) \right\|_{F} \\
& \leq \frac{1}{\gamma_{n}}  \sum_{k=1}^{n} \left\|  \beta_{n} \beta_{k}^{-1}\gamma_{k} \right\|_{op}^{2}\left\| \mathbb{E}\left[ \nabla_{h}g \left( X_{k+1} , m_{k} \right) \otimes \nabla_{h}g \left( X_{k+1} , m_{k} \right) |\mathcal{F}_{k} \right] - \Sigma' \right\|_{F} \\
& \leq \frac{1}{\gamma_{n}}  \sum_{k=1}^{n} \gamma_{k}^{2} \exp \left( - 2 \lambda_{\min} \sum_{j=k+1}^{n}\gamma_{j} \right) \left\| \mathbb{E}\left[ \nabla_{h}g \left( X_{k+1} , m_{k} \right) \otimes \nabla_{h}g \left( X_{k+1} , m_{k} \right) |\mathcal{F}_{k} \right] - \Sigma' \right\|_{F} .
\end{align*}
Then, with the help of assumption \textbf{(A6a)}, Lemma \ref{sumexppart} and Toeplitz's lemma, one can check that
\[
\lim_{n \to \infty} \frac{1}{\gamma_{n}} \left\| \sum_{k=1}^{n} \left( \beta_{n} \beta_{k}^{-1}\gamma_{k} \right)\left( \mathbb{E}\left[ \nabla_{h}g \left( X_{k+1} , m_{k} \right) \otimes \nabla_{h}g \left( X_{k+1} , m_{k} \right) |\mathcal{F}_{k} \right] - \Sigma' \right) \left( \beta_{n} \beta_{k}^{-1}\gamma_{k} \right) \right\|_{F} = 0 \quad a.s.
\]
\medskip
In order to verify equality (\ref{convbeta}), we have to prove 
\[
\lim_{n \to \infty}\frac{1}{\gamma_{n}}\left\| \sum_{k=1}^{n} \gamma_{k}^{2} \beta_{n}\beta_{k}^{-1} \Sigma ' \beta_{n} \beta_{k}^{-1} - \Sigma_{RM} \right\| = 0.
\]
Let $\left( e_{i} \right)_{i \in I}$ be an orthonormal basis of $H$ composed of eigenvectors of $\Gamma_{m}$, and let $\left( \lambda_{i} \right)_{i \in I}$ be the set of the associated eigenvalues. Then, let us rewrite $\nabla_{h}g \left( X , m \right)$ as
\[
\nabla_{h}g \left( X , m \right) = \sum_{i \in I}\left\langle \nabla_{h}g \left( X , m \right) , e_{i} \right\rangle e_{i}   ,
\] 
and it comes, by linearity and by dominated convergence, 
\begin{align*}
\frac{1}{\gamma_{n}}& \sum_{k=1}^{n} \gamma_{k}^{2}  \beta_{n}\beta_{k}^{-1}  \Sigma ' \beta_{n}\beta_{k}^{-1}  \\
& = \frac{1}{\gamma_{n}}\sum_{k=1}^{n}\gamma_{k}^{2}\mathbb{E}\left[ \beta_{n}\beta_{k}^{-1}  \nabla_{h}g  \left( X , m \right) \otimes \beta_{n}\beta_{k}^{-1}  \nabla_{h}g  \left( X , m \right) \right] \\
& = \mathbb{E}\left[\frac{1}{\gamma_{n}}\sum_{k=1}^{n}\gamma_{k}^{2} \left(  \sum_{i \in I}\left\langle \nabla_{h}g \left( X , m \right) , e_{i} \right\rangle \prod_{j=k+1}^{n} \left( 1-\gamma_{k}\lambda_{i} \right)e_{i} \right) \otimes \left( \sum_{i \in I}\left\langle \nabla_{h}g \left( X , m \right) , e_{i} \right\rangle \prod_{j=k+1}^{n} \left( 1-\gamma_{k}\lambda_{i} \right)e_{i} \right) \right] .
\end{align*}
In the same way,
\begin{align*}
\Sigma_{RM} & = \int_{0}^{\infty} e^{-sH} \Sigma ' e^{-sH} ds \\
& = \int_{0}^{\infty}\mathbb{E}\left[ \left(\sum_{i \in I}\left\langle \nabla_{h}g \left( X , m \right) , e_{i} \right\rangle e^{-\lambda_{i}s} e_{i} \right) \otimes \left(\sum_{i \in I}\left\langle \nabla_{h}g \left( X , m \right) , e_{i} \right\rangle e^{-\lambda_{i}s} e_{i} \right) \right] ds \\
& = \mathbb{E}\left[\int_{0}^{\infty} \left(\sum_{i \in I}\left\langle \nabla_{h}g \left( X , m \right) , e_{i} \right\rangle e^{-\lambda_{i}s} e_{i} \right) \otimes \left(\sum_{i \in I}\left\langle \nabla_{h}g \left( X , m \right) , e_{i} \right\rangle e^{-\lambda_{i}s} e_{i} \right)  ds \right] .
\end{align*}
In order to conclude the proof, let us now introduce the following lemma, which allows to give a bound of $ \left\| \frac{1}{\gamma_{n}} \sum_{k=1}^{n}   \beta_{n}\beta_{k}^{-1} \gamma_{k} \Sigma ' \beta_{n}\beta_{k}^{-1} \gamma_{k} - \Sigma_{RM} \right\|_{F}$.
\begin{lem}\label{lempelunif}
There is a positive sequence $\left( a_{n} \right)$ such that for all $n \geq 1$ and for all $ i,i' \in I$,
\[
-a_{n} \leq \frac{1}{\gamma_{n}}\sum_{k=1}^{n}\gamma_{k}^{2}\prod_{j=k+1}^{n} \left( 1-\gamma_{j}\lambda_{i} \right) \left( 1-\gamma_{j}\lambda_{i'} \right)- \int_{0}^{\infty} e^{-\left( \lambda_{i} + \lambda_{i'} \right)s }ds \leq a_{n},
\]
and $\lim_{n \to \infty} a_{n} = 0$.
\end{lem}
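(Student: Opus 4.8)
The statement to be proved is Lemma \ref{lempelunif}. Intuitively the sum is a Riemann-type discretisation of $\int_0^\infty e^{-(\lambda_i+\lambda_{i'})s}\,ds=1/(\lambda_i+\lambda_{i'})$, and the rigorous route I would take is to view it as the solution of a first–order linear recursion and to run the whole argument with constants that only involve $\lambda_{\min}$, $\lambda_{\max}(\Gamma_m)=:C<\infty$ (finite by \textbf{(A2)}--\textbf{(A3)} and the remark following Theorem~\ref{tlcgrad}) and $c_\gamma,\alpha$, so that the error bound is automatically uniform in $i,i'$. Fix $i,i'\in I$, put $\nu:=\lambda_i+\lambda_{i'}$ and $\pi:=\lambda_i\lambda_{i'}$, so that $2\lambda_{\min}\le\nu\le 2C$ and $\lambda_{\min}^2\le\pi\le C^2$, and set $\rho_n:=(1-\gamma_n\lambda_i)(1-\gamma_n\lambda_{i'})=1-\gamma_n\nu+\gamma_n^2\pi$ together with $\sigma_n:=\sum_{k=1}^n\gamma_k^2\prod_{j=k+1}^n\rho_j$. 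Isolating the term $k=n$ and factoring $\rho_n$ out of the others yields $\sigma_n=\gamma_n^2+\rho_n\sigma_{n-1}$ with $\sigma_0=0$. Since $\int_0^\infty e^{-\nu s}\,ds=1/\nu$, it is enough to produce a sequence $(a_n)$, not depending on $i,i'$, with $a_n\to0$ and $|\sigma_n/\gamma_n-1/\nu|\le a_n$ for every $n$.

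Next I would study $\phi_n:=\sigma_n/\gamma_n$, which from the recursion satisfies
\[
\phi_n=\gamma_n+\rho_n\,\frac{\gamma_{n-1}}{\gamma_n}\,\phi_{n-1}.
\]
Using $\gamma_{n-1}/\gamma_n=(n/(n-1))^\alpha=1+\alpha/n+O(n^{-2})$ together with the expression for $\rho_n$, a short Taylor expansion gives $\rho_n\gamma_{n-1}/\gamma_n=1-\gamma_n\nu+\varepsilon_n$ with $|\varepsilon_n|\le C_1(\gamma_n^2+n^{-1})$, where $C_1$ depends only on $C,\alpha,c_\gamma$ (the eigenvalues enter only through $\nu\le 2C$, $\pi\le C^2$). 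Because $\gamma_n\nu\ge2\lambda_{\min}\gamma_n$ is of order $n^{-\alpha}$, which dominates both $\gamma_n^2$ (order $n^{-2\alpha}$, and $2\alpha>1$) and $n^{-1}$ (since $\alpha<1$), there is a rank $n_1$ depending only on $\lambda_{\min},C,c_\gamma,\alpha$ such that $0<\rho_n\gamma_{n-1}/\gamma_n\le 1-\lambda_{\min}\gamma_n$ for all $n\ge n_1$ and all $i,i'$.

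Writing $e_n:=\phi_n-1/\nu$ and inserting the expansion into the recursion, the two copies of $\gamma_n$ cancel and one is left with
\[
e_n=\rho_n\,\frac{\gamma_{n-1}}{\gamma_n}\,e_{n-1}+\frac{\varepsilon_n}{\nu},
\qquad\text{hence}\qquad
|e_n|\le(1-\lambda_{\min}\gamma_n)\,|e_{n-1}|+\frac{C_1}{2\lambda_{\min}}\bigl(\gamma_n^2+n^{-1}\bigr)
\]
for $n\ge n_1$, using $1/\nu\le 1/(2\lambda_{\min})$. For $n\le n_1$ the numbers $|e_n|$ are crudely bounded by a constant $C_2$ depending only on the fixed parameters (from $\bigl|\prod_{j=k+1}^n\rho_j\bigr|\le(1+c_\gamma C)^{2n}$, $\gamma_k\le c_\gamma$ and $1/\nu\le 1/(2\lambda_{\min})$). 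Defining $a_n:=C_2$ for $n\le n_1$ and $a_n:=(1-\lambda_{\min}\gamma_n)a_{n-1}+\frac{C_1}{2\lambda_{\min}}(\gamma_n^2+n^{-1})$ for $n>n_1$ then gives $|e_n^{(i,i')}|\le a_n$ for all $i,i'$ and all $n$; and since $\gamma_n\to0$, $\sum_n\gamma_n=\infty$ and $(\gamma_n^2+n^{-1})/\gamma_n=c_\gamma n^{-\alpha}+c_\gamma^{-1}n^{\alpha-1}\to0$, the elementary stabilisation lemma for recursions of this type (of the kind used repeatedly in \cite{godichon2016}) gives $a_n\to0$. This is exactly the two-sided bound asserted.

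The Taylor expansions and the crude bound for small indices are routine; the one point that really needs care is the uniformity, namely verifying that $C_1$, the rank $n_1$, $C_2$, and hence $(a_n)$, involve the eigenvalues $\lambda_i,\lambda_{i'}$ only through the fixed bounds $\lambda_{\min}\le\lambda_i\le\lambda_{\max}(\Gamma_m)$. This is precisely what allows a single sequence $(a_n)$ to work simultaneously for every pair $(i,i')$, and it is where I expect the proof to spend most of its effort.
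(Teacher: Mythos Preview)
Your argument is correct, and the uniformity in $(i,i')$ is handled exactly as it should be: every constant you produce depends on the eigenvalues only through the fixed bounds $\lambda_{\min}$ and $C=\lambda_{\max}(\Gamma_m)$, so a single sequence $(a_n)$ works. The recursion $\sigma_n=\gamma_n^2+\rho_n\sigma_{n-1}$, the passage to $\phi_n=\sigma_n/\gamma_n$, the cancellation that yields $e_n=\rho_n(\gamma_{n-1}/\gamma_n)e_{n-1}+\varepsilon_n/\nu$, and the appeal to a Robbins--Monro--type stabilisation lemma all check out.

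The paper proceeds quite differently. It writes $c_{n,k}=\prod_{j=k+1}^n(1-\gamma_j\lambda_i)(1-\gamma_j\lambda_{i'})$, uses the Taylor expansion of $\ln(1-x)$ to sandwich $c_{n,k}$ between two explicit exponentials in $\sum_{j=k+1}^n\gamma_j$ (with uniform remainder since $\lambda_i\le C$), replaces the resulting sums by integrals via an integral test, and then performs an integration by parts on $\int_0^n t^{-2\alpha}\exp(\cdot)\,dt$ to extract the main term $\gamma_n/(\lambda_i+\lambda_{i'})$ together with uniformly vanishing remainders $\epsilon_n\gamma_n$ and $\epsilon_n'\gamma_n$. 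Your recursive approach is cleaner and more self-contained: it avoids the somewhat delicate integral manipulations and the bookkeeping around the rank $n_\alpha$, and it makes the uniformity transparent because every bound is stated directly in terms of $\lambda_{\min}$ and $C$. The paper's approach, on the other hand, is closer in spirit to the estimates of $\|\beta_n\beta_k^{-1}\|_{op}$ already used elsewhere (inequality~(\ref{majbeta}) and Lemma~\ref{sumexppart}), so it recycles existing machinery rather than introducing a new recursion argument.
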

\begin{proof}
The proof is given in Appendix. 
\end{proof}
Thanks to previous lemma, let
\begin{align*}
& (*)  = \left\| \frac{1}{\gamma_{n}}\sum_{k=1}^{n}   \beta_{n}\beta_{k}^{-1} \gamma_{k} \Sigma ' \beta_{n}\beta_{k}^{-1} \gamma_{k} - \Sigma_{RM}    \right\|_{F} \\
& \leq \mathbb{E}\left[\sqrt{\sum_{i ,i' \in I} \left( \frac{1}{\gamma_{n}}\sum_{k=1}^{n}\gamma_{k}^{2} \prod_{j=k+1}^{n} \left( 1-\gamma_{k}\lambda_{i} \right) \left( 1-\gamma_{k}\lambda_{i'} \right)- \frac{1}{\lambda_{i}+ \lambda_{i'}} \right)^{2}\left\langle \nabla_{h}g \left( X , m \right) , e_{i} \right\rangle^{2}\left\langle \nabla_{h}g \left( X , m \right) , e_{i'} \right\rangle^{2} } \right]  \\
& \leq a_{n} \mathbb{E}\left[ \sqrt{\sum_{i,i' \in I} \left\langle \nabla_{h}g \left( X , m \right) , e_{i} \right\rangle^{2}\left\langle \nabla_{h}g \left( X , m \right) , e_{i'} \right\rangle^{2}} \right] \\
& = a_{n} \mathbb{E}\left[ \sum_{i \in I} \left\langle \nabla_{h}g \left( X , m \right) , e_{i} \right\rangle^{2}\right] .
\end{align*}
Under assumption \textbf{(A5a)},
\begin{align*}
\left\| \frac{1}{\gamma_{n}}\sum_{k=1}^{n}   \beta_{n}\beta_{k}^{-1} \gamma_{k} \Sigma ' \beta_{n}\beta_{k}^{-1} \gamma_{k} - \Sigma_{RM}    \right\|_{F} & \leq  a_{n} \mathbb{E}\left[ \sum_{i \in I} \left\langle \nabla_{h}g \left( X , m \right) , e_{i} \right\rangle^{2}\right] \\
& =  a_{n}\mathbb{E}\left[ \left\| \nabla_{h} g \left( X,m \right) \right\|^{2} \right] \\
& \leq L_{1}a_{n}.
\end{align*}
Since $a_{n}$ converges to $0$, this concludes the proof of inequality (\ref{in2rm}).

\medskip

\textbf{Proof of inequality (\ref{in3rm})} Let $\epsilon > 0$, applying Markov's inequality,
\begin{align*}
\mathbb{P} & \left( \frac{1}{\gamma_{n}} \sum_{k=1}^{n} \sum_{j=N}^{\infty} \left\langle \beta_{n}\beta_{k}^{-1}\gamma_{k}\xi_{k+1}, e_{j} \right\rangle > \epsilon  \right)  \leq \frac{1}{\gamma_{n}\epsilon^{2}}\sum_{k=1}^{n} \sum_{j=N}^{\infty} \mathbb{E}\left[ \left\langle \beta_{n}\beta_{k}^{-1}\gamma_{k}\xi_{k+1}, e_{j} \right\rangle^{2} \right] \\
& = \frac{1}{\gamma_{n}\epsilon^{2}} \sum_{k=1}^{n} \sum_{j=N}^{\infty} \mathbb{E}\left[ \mathbb{E}\left[ \left\langle \beta_{n}\beta_{k}^{-1}\xi_{k+1}\otimes \beta_{n}\beta_{k}^{-1}\xi_{k+1} |\mathcal{F}_{k} \right] (e_{j}), e_{j} \right\rangle^{2} \right]  \\
& = \frac{1}{\epsilon^{2}}  \sum_{j=N}^{\infty}\frac{1}{\gamma_{n}}\sum_{k=1}^{n} \mathbb{E}\left[ \mathbb{E}\left[ \left\langle \beta_{n}\beta_{k}^{-1}\xi_{k+1}\otimes \beta_{n}\beta_{k}^{-1}\xi_{k+1} |\mathcal{F}_{k} \right] (e_{j}), e_{j} \right\rangle^{2} \right]
\end{align*}
Since $\frac{1}{\gamma_n}\sum_{k=1}^{n} \left( \beta_{n} \beta_{k}^{-1}\gamma_{k} \xi_{k+1} \right) \otimes \left( \beta_{n} \beta_{k}^{-1}\gamma_{k} \xi_{k+1} \right) $ converges almost surely to $\Sigma_{RM}$ with respect to the Frobenius norm and by dominated convergence,
\[
\limsup_{n} \mathbb{P} \left( \frac{1}{\gamma_{n}} \sum_{k=1}^{n} \sum_{j=N}^{\infty} \left\langle \beta_{n}\beta_{k}^{-1}\gamma_{k}\xi_{k+1},e_{j} \right\rangle > \epsilon \right) \leq \frac{1}{\epsilon^2}\sum_{j=N}^{\infty} \left\langle \Sigma_{RM}(e_{j}) ,e_{j} \right\rangle .
\]
Moreover, since
\begin{align*}
\sum_{j= 1}^{\infty} \left\langle \Sigma_{RM} (e_{j}) ,e_{j} \right\rangle = \left\| \int_{0}^{\infty} e^{-sH} \Sigma ' e^{-sH}ds  \right\|_{F}  \leq \frac{1}{2\lambda_{\min}} \left\| \Sigma ' \right\|_{F} \leq \frac{L_{1}}{2\lambda_{\min}},
\end{align*}
and since $\left\langle \Sigma_{RM}\left( e_{j} \right) , e_{j} \right\rangle \geq 0$ for all $j \in I$, 
\[
\lim_{N \to \infty} \frac{1}{\epsilon^{2}}\sum_{j=N}^{\infty} \left\langle \Sigma_{RM}\left( e_{j} \right) , e_{j} \right\rangle = 0,
\]
which concludes the proof.

\section{Proof of Lemma \ref{lemtech}}
\begin{proof}

\textbf{Bounding $ \mathbb{E}\left[ \left\| \sum_{k=1}^{n}\frac{a_{k}}{\gamma_{k}} \left( T_{k} - T_{k+1} \right) \right\|^{2p} \right]$.} Applying an Abel's transform,
\[
A_{1,n} = \frac{a_{1}}{\gamma_{1}}T_{1} - \frac{a_{n}}{\gamma_{n}}T_{n+1} + \sum_{k=2}^{n}\left( \frac{a_{k}}{\gamma_{k}} - \frac{a_{k-1}}{\gamma_{k-1}}\right) T_{k} .
\] 
First, $\mathbb{E}\left[ \left\| \frac{a_{1}}{\gamma_{1}}T_{1} \right\|^{2p} \right] = O \left( 1 \right)$. Moreover, applying inequality (\ref{vitlprm}) ,
\begin{align*}
\mathbb{E}\left[ \left\| \frac{a_{n}}{\gamma_{n}}T_{n+1} \right\|^{2p}\right] & \leq \exp \left( \frac{pn^{1-s}}{1-s} \right)c_{\gamma}^{-1}n^{2p\alpha} \frac{C_{p}\lambda_{\min}^{-2p}}{n^{p\alpha}} \\
& \leq \exp \left( \frac{pn^{1-s}}{1-s} \right)C_{p}c_{\gamma}^{-1}\lambda_{\min}^{-2p}n^{p\alpha}.
\end{align*}
Furthermore, one can check that there is a positive constant $C$ such that for all $n \geq 1$, 
\[
\left| \frac{a_{n}}{\gamma_{n}} - \frac{a_{n-1}}{\gamma_{n-1}} \right| \leq C n^{-s+\alpha}\exp \left( \frac{n^{1-s}}{2(1-s)}\right), 
\]
and applying Lemma \ref{lemsum} and inequality (\ref{vitlprm}),
\begin{align*}
\mathbb{E}\left[ \left\| \sum_{k=2}^{n}\left( \frac{a_{k}}{\gamma_{k}} - \frac{a_{k-1}}{\gamma_{k-1}}\right) T_{k} \right\|^{2p} \right] & \leq \left( \sum_{k=2}^{n}  \left| \frac{a_{k}}{\gamma_{k}} - \frac{a_{k-1}}{\gamma_{k-1}} \right| \left( \mathbb{E}\left[ \left\| T_{k} \right\|^{2p} \right] \right)^{\frac{1}{2p}}\right)^{2p} \\
& \leq C^{2p}C_{p}\lambda_{\min}^{-2p} \left( \sum_{k=2}^{n} k^{-s+\alpha}\exp \left( \frac{k^{1-s}}{2(1-s)} \right)k^{-\alpha / 2} \right)^{2p} .
\end{align*}
Finally, applying Lemma \ref{lemsumexp}, 
\[
\mathbb{E}\left[ \left\| \sum_{k=2}^{n}\left( \frac{a_{k}}{\gamma_{k}} - \frac{a_{k-1}}{\gamma_{k-1}}\right) T_{k} \right\|^{2p} \right] = O \left( \exp \left( \frac{pn^{1-s}}{1-s} \right) n^{p\alpha } \right) .
\]

\medskip

\textbf{Bounding $\mathbb{E}\left[ \left\| \sum_{k=1}^{n} a_{k} \Delta_{k} \right\|^{2p}\right]$.} Since there is a positive constant $C_{m}$ (see \cite{godichon2016}) such that for all $n \geq 1$, $\left\| \Delta_{n} \right\| \leq C_{m}\left\| T_{n} \right\|^{2}$, applying Lemma \ref{lemsum} and inequality (\ref{vitlprm}),
\begin{align*}
\mathbb{E}\left[ \left\| \sum_{k=1}^{n} a_{k} \Delta_{k} \right\|^{2p} \right] & \leq C_{m}^{2p}\lambda_{\min}^{-4p} \left( \sum_{k=1}^{n} a_{k} \left( \mathbb{E}\left[ \left\| m_{n} - m \right\|^{4p} \right] \right)^{\frac{1}{2p}} \right)^{2p} \\
& \leq C_{m}^{2p}\lambda_{\min}^{-4p}C_{2p} \left( \sum_{k=1}^{n} a_{k}k^{-\alpha}  \right)^{2p}  
\end{align*}
Applying Lemma \ref{lemsumexp},
\[
\mathbb{E}\left[ \left\| \sum_{k=1}^{n} a_{k} \Delta_{k} \right\|^{2p} \right] = O \left( \exp \left( \frac{pn^{1-s}}{1-s} \right) n^{2p (s- \alpha ) }\right)
\]

\medskip

\textbf{Bounding $\mathbb{E}\left[ \left\| \sum_{k=1}^{n}a_{k}\Xi_{k+1} \right\|^{2p}\right]$.} First, since $\left( \Xi_{n} \right)$ is a sequence of martingale differences, and thanks to Lemma \ref{lemsumexp}, 
\begin{align*}
\mathbb{E}\left[ \left\| \sum_{k=1}^{n} a_{k}\Xi_{k+1} \right\|^{2} \right] & = \sum_{k=1}^{n} a_{k}^{2}\mathbb{E}\left[ \left\| \Xi_{k+1} \right\|^{2} \right] \\
& = O \left( \exp \left( \frac{n^{1-s}}{1-s} \right) n^{s} \right) .
\end{align*}
With the help of an induction on $p$ (see the proof of Theorem 4.2 in \cite{godichon2015} for instance), one can check that for all integer $p \geq 1$,
\[
\mathbb{E}\left[ \left\| \sum_{k=1}^{n} a_{k} \Xi_{k+1} \right\|^{2p}\right] = O \left( \exp \left( p\frac{n^{1-s}}{1-s} \right) n^{ps} \right) ,
\]
which concludes the proof.
\end{proof}

\section{Proof of Lemma \ref{lempelunif}}
Let $\left( \lambda_{i} \right)_{i \in I}$ be the eigenvalues of the Hessian $\Gamma_{m}$. First, let 
\begin{align*}
c_{n,k} :=  \prod_{j=k+1}^{n} \left( 1-\gamma_{j}\lambda_{i} \right)\prod_{j=k+1}^{n} \left( 1-\gamma_{j}\lambda_{i'} \right) = \exp \left(  \sum_{j=k+1}^{n} \left( \ln \left( 1- \gamma_{j}\lambda_{i} \right) +  \ln \left( 1- \gamma_{j}\lambda_{i'} \right) \right) \right)
\end{align*}
Let us recall that there is a positive constant $C$ such taht for all $i \in I$, $\lambda_{i} \leq C$. Then, let $n_{\alpha}$ be an integer such that for all $k \geq n_{\alpha}$, $C\gamma_{k} < 1$, and it comes, for all $k \geq n_{\alpha}$, $\lambda_{i}\gamma_{k} \leq C \gamma_{k} < 1$. Then, with the help of the Taylor's expansion of the functional $x \longmapsto \ln (1-x)$, one can check that for all $i \in I$ and for all $k \geq n_{\alpha}$,
\[
- \lambda_{i} \gamma_{k} \geq   \ln \left( 1- \lambda_{i}\gamma_{k} \right) \geq - \lambda_{i}\gamma_{k} - \frac{\lambda_{i}^{2}\gamma_{k}^{2}}{1-C\gamma_{n_{\alpha}}} = - \lambda_{i}\gamma_{k} - c\gamma_{k}^{2},
\]
with $c:= \frac{1}{1-C\gamma_{n_{\alpha}}}$. Then, for all $n,k \geq n_{\alpha}$,
\[
\exp \left( - \sum_{j=k+1}^{n}\left(  \left( \lambda_{i}+ \lambda_{i'} \right) \gamma_{j} + 2c\gamma_{j}^{2} \right)\right) \leq c_{n,k} \leq \exp \left( - \sum_{j=k+1}^{n} \left( \lambda_{i}+ \lambda_{i'} \right) \gamma_{j} \right) .
\]
With the help of an integral test for convergence,
\begin{align*}
c_{n,k} & \geq \exp \left( - \left( \lambda_{i} + \lambda_{i'} \right) \gamma_{k+1} - c_{\gamma}\int_{k+1}^{n}  \left( \lambda_{i}+ \lambda_{i'} \right)t^{-\alpha}dt  - 2c\gamma_{k+1}^{2} - 2cc_{\gamma}^{2} \int_{k+1}^{n}t^{-2\alpha}dt \right) \\
c_{n,k} & \leq \exp \left( - \left( \lambda_{i} + \lambda_{i'} \right) \gamma_{k+1} - c_{\gamma}\int_{k+1}^{n} \left( \lambda_{i}+ \lambda_{i'} \right) t^{-\alpha}dt \right) .
\end{align*}
Then,
\begin{align*}
c_{n,k}  \geq & \exp \left( - \left( \lambda_{i} + \lambda_{i'} \right) \left( \frac{c_{\gamma}}{1-\alpha}\left( (k+1)^{1-\alpha} - n^{1-\alpha} \right) - \gamma_{k+1}\right) \right) \\
&  \times \exp \left( 2c\left( \gamma_{k+1}^{2} -\frac{ c_{\gamma}^{2}}{1-2\alpha} \left( (n^{1-2\alpha} -(k+1)^{1-2\alpha} \right)  \right) \right)    \\
c_{n,k}  \leq &  \exp \left( -\left( \lambda_{i} + \lambda_{i'} \right) \left( \frac{c_{\gamma}}{1-\alpha}\left( (k+1)^{1-\alpha} - n^{1-\alpha} \right) - \gamma_{k+1}\right) \right)
\end{align*}
We now give an upper bound of $\sum_{k=1}^{n} \gamma_{k}^{2}c_{n,k}$. Since $0 < \lambda_{\min} \leq \lambda_{i} \leq C$ for all $i \in I$, there is a rank $n_{\alpha}$, only depending on $\lambda_{\min} , C, c_{\gamma}$ and $\alpha$, such that the functional $\varphi : \mathbb{R} \longrightarrow \mathbb{R}$ defined for all $t \in \mathbb{R}$ by
\[
\varphi (t) := c_{\gamma}^{2}t^{-2\alpha} \exp \left( -\left( \lambda_{i} + \lambda_{i'} \right) \left( \frac{c_{\gamma}}{1-\alpha}\left( (t+1)^{1-\alpha} - n^{1-\alpha} \right) - c_{\gamma}(t+1)^{-\alpha}\right) \right) ,
\]
is increasing on $[n_{\alpha} , + \infty ]$. For the sake of simplicity, let us consider that $n_{\alpha} = 0$. Then, with the help of an integral test for convergence, 
\begin{align}
\notag \sum_{k=1}^{n} &  \gamma_{k}^{2}c_{n,k}  \leq \int_{0}^{n}c_{\gamma}^{2}t^{-2\alpha} \exp \left( -\left( \lambda_{i} + \lambda_{i'} \right) \left( \frac{c_{\gamma}}{1-\alpha}\left( (t+1)^{1-\alpha} - n^{1-\alpha} \right) - c_{\gamma}(t+1)^{-\alpha}\right) \right) dt \\
\notag & = c_{\gamma}\frac{1}{\lambda_{i} + \lambda_{i'}} \left[ \exp \left( -\left( \lambda_{i} + \lambda_{i'} \right) \frac{c_{\gamma}}{1-\alpha}\left( (t+1)^{1-\alpha} - n^{1-\alpha} \right) \right)t^{-\alpha}\exp \left( -\left( \lambda_{i} + \lambda_{i'} \right)  c_{\gamma}(t+1)^{- \alpha} \right)   \right]_{0}^{n} \\
\label{inegintpourri} &  + c_{\gamma}\frac{1}{\lambda_{i} + \lambda_{i'}} \int_{0}^{n} e^{ -\left( \lambda_{i} + \lambda_{i'} \right) \frac{c_{\gamma}}{1-\alpha}\left( (t+1)^{1-\alpha} - n^{1-\alpha} \right) } \left(  t^{-1-\alpha} - \left( \lambda_{i} + \lambda_{i'} \right)  c_{\gamma}t^{-2\alpha}\right)   e^{ -\left( \lambda_{i} + \lambda_{i'} \right)  c_{\gamma}(t+1)^{- \alpha} } dt
\end{align}
Then, since for all $i \in I$, $0< \lambda_{\min} \leq \lambda_{i} \leq C$, one can check that there is a positive sequence $\left( \epsilon_{n} \right)_{n \geq 1}$ only depending on $\alpha, c_{\gamma}, \lambda_{\min}, C$ such that
\begin{align*}
& \sum_{k=1}^{n}\gamma_{k}^{2}c_{n,k} \leq \frac{\gamma_{n}}{\lambda_{i} + \lambda_{i'}} + \epsilon_{n}\gamma_{n}, \quad \quad \text{and} \quad \quad \lim_{n \to \infty} \epsilon_{n} = 0.
\end{align*}
With analogous calculus, on can check that there is a positive sequence $\left( \epsilon_{n} ' \right)_{n \geq 1}$ only depending on $\alpha , c_{\gamma}, \lambda_{\min} , C$ such that
\begin{align*}
& \sum_{k=1}^{n} \gamma_{k}^{2}c_{n,k} \geq \frac{\gamma_{n}}{\lambda_{i} + \lambda_{i'}} - \epsilon_{n}'\gamma_{n},  \quad \quad \text{and} \quad \quad \lim_{n \to \infty} \epsilon_{n}' = 0,
\end{align*}
which concludes the proof.

\section{Proof of Lemma \ref{majopasbelle0}}
We only give the bound of the quadratic mean error since the almost sure rate of convergence is quite straightforward. First, since
\begin{align*}
\left( m_{j} - \overline{m}_{j} \right) \otimes \left( m_{j} - \overline{m}_{j} \right) & - \left( m_{j} - m \right) \otimes \left( m_{j} - m \right) \\
& =  \left( m_{j} - m + m - \overline{m}_{j} \right) \otimes \left( m_{j}- m + m - \overline{m}_{j} \right) - \left( m_{j} - m \right) \otimes \left( m_{j} - m \right) \\
& = \left( m - \overline{m}_{j} \right) \otimes \left( m_{j} - m \right) + \left( m_{j} - m \right) \otimes \left( m - \overline{m}_{j} \right) + \left( m - \overline{m}_{j} \right) \otimes \left( m - \overline{m}_{j} \right) ,
\end{align*} 
and by linearity, let
\begin{align*}
(\star ) & := \Sigma_{n} - \frac{1-\delta}{n^{1-\delta}} \sum_{k=1}^{n} \frac{1}{k^{\delta +s}}\exp \left( - \frac{k^{1-s}}{1-s}\right) \left( \sum_{j=1}^{k} e^{\frac{j^{1-s}}{2(1-s)}}\left( m_{j} - m \right) \right) \otimes \left( \sum_{j=1}^{k} e^{\frac{j^{1-s}}{2(1-s)}}\left( m_{j} - m \right) \right) \\
& = - \frac{1-\delta}{n^{1-\delta}} \sum_{k=1}^{n} \frac{1}{k^{\delta +s}}\exp \left( - \frac{k^{1-s}}{1-s}\right) \left( \sum_{j=1}^{k} e^{\frac{j^{1-s}}{2(1-s)}}\left( m_{j} - m \right) \right) \otimes \left( \sum_{j=1}^{k} e^{\frac{j^{1-s}}{2(1-s)}}\left( \overline{m}_{j} - m \right) \right) \\
& - \frac{1-\delta}{n^{1-\delta}} \sum_{k=1}^{n} \frac{1}{k^{\delta +s}}\exp \left( - \frac{k^{1-s}}{1-s}\right) \left( \sum_{j=1}^{k} e^{\frac{j^{1-s}}{2(1-s)}}\left( \overline{m}_{j} - m \right) \right) \otimes \left( \sum_{j=1}^{k} e^{\frac{j^{1-s}}{2(1-s)}}\left( m_{j} - m \right) \right) \\
& + \frac{1-\delta}{n^{1-\delta}} \sum_{k=1}^{n} \frac{1}{k^{\delta +s}}\exp \left( - \frac{k^{1-s}}{1-s}\right) \left( \sum_{j=1}^{k} e^{\frac{j^{1-s}}{2(1-s)}}\left( \overline{m}_{j} - m \right) \right) \otimes \left( \sum_{j=1}^{k} e^{\frac{j^{1-s}}{2(1-s)}}\left( \overline{m}_{j} - m \right) \right) .
\end{align*}
Then, we have to bound the three terms on the right-hand side of previous equality. 

\medskip

\textbf{Bounding $\mathbb{E}\left[ \left\| \frac{1-\delta}{n^{1-\delta}} \sum_{k=1}^{n} \frac{1}{k^{\delta +s}}\exp \left( - \frac{k^{1-s}}{1-s}\right) \left( \sum_{j=1}^{k} e^{\frac{j^{1-s}}{2(1-s)}}\left( m_{j} - m \right) \right) \otimes \left( \sum_{j=1}^{k} e^{\frac{j^{1-s}}{2(1-s)}}\left( \overline{m}_{j} - m \right) \right) \right\|_{F}^{2}\right] $.} First, applying Lemma \ref{lemsum} and equality (\ref{normf}), let
\begin{align*}
(*) & := \mathbb{E}\left[ \left\| \frac{1-\delta}{n^{1-\delta}} \sum_{k=1}^{n} \frac{1}{k^{\delta +s}}\exp \left( - \frac{k^{1-s}}{1-s}\right) \left( \sum_{j=1}^{k} e^{\frac{j^{1-s}}{2(1-s)}}\left( m_{j} - m \right) \right) \otimes \left( \sum_{j=1}^{k} e^{\frac{j^{1-s}}{2(1-s)}}\left( \overline{m}_{j} - m \right) \right) \right\|_{F}^{2}\right] \\
& \leq \left( \frac{1-\delta}{n^{1-\delta}}\right)^{2} \left(  \sum_{k=1}^{n} \frac{1}{k^{\delta +s}}\exp \left( - \frac{k^{1-s}}{1-s}\right) \sqrt{\mathbb{E}\left[ \left\|  \left( \sum_{j=1}^{k} e^{\frac{j^{1-s}}{2(1-s)}}\left( m_{j} - m \right) \right) \otimes \left( \sum_{j=1}^{k} e^{\frac{j^{1-s}}{2(1-s)}}\left( \overline{m}_{j} - m \right) \right) \right\|_{F}^{2} \right]}\right)^{2} \\
& \leq \left( \frac{1-\delta}{n^{1-\delta}}\right)^{2} \left(  \sum_{k=1}^{n} \frac{1}{k^{\delta +s}}\exp \left( - \frac{k^{1-s}}{1-s}\right) \sqrt{\mathbb{E}\left[ \left\|   \sum_{j=1}^{k} e^{\frac{j^{1-s}}{2(1-s)}}\left( m_{j} - m \right)  \right\|^{2} \left\|  \sum_{j=1}^{k} e^{\frac{j^{1-s}}{2(1-s)}}\left( \overline{m}_{j} - m \right)  \right\|^{2} \right]}\right)^{2} .
\end{align*}
Applying Cauchy-Schwarz's inequality,
\begin{align*}
(*) \leq  & \left( \frac{1-\delta}{n^{1-\delta}}\right)^{2}  \left(  \sum_{k=1}^{n} \frac{1}{k^{\delta +s}}e^{ - \frac{k^{1-s}}{1-s}} \left(\mathbb{E}\left[ \left\|   \sum_{j=1}^{k} e^{\frac{j^{1-s}}{2(1-s)}}\left( m_{j} - m \right)  \right\|^{4} \right] \right)^{\frac{1}{4}} \left( \mathbb{E}\left[ \left\|   \sum_{j=1}^{k} e^{\frac{j^{1-s}}{2(1-s)}}\left( \overline{m}_{j} - m \right)  \right\|^{4}\right]\right)^{\frac{1}{4}} \right)^{2}.
\end{align*}
First, note that thanks to Lemma \ref{lemtech}
\[
\mathbb{E}\left[ \left\|   \sum_{j=1}^{k} e^{\frac{j^{1-s}}{2(1-s)}}\left( m_{j} - m \right)  \right\|^{4} \right] = O \left( \exp \left( \frac{2k^{1-s}}{1-s}\right) k^{2s} \right) .
\]
Furthermore, applying Lemmas \ref{lemsum} and Lemma \ref{lemsumexp} as well as inequality (\ref{vitlpmoy}),
\begin{align}
\notag \mathbb{E}\left[ \left\|   \sum_{j=1}^{k} e^{\frac{j^{1-s}}{2(1-s)}}\left( \overline{m}_{j} - m \right)  \right\|^{4}\right] & \leq \left(  \sum_{j=1}^{k} e^{\frac{j^{1-s}}{2(1-s)}}\left( \mathbb{E}\left[ \left\| \overline{m}_{j} - m \right\|^{4} \right]\right)^{\frac{1}{4}}\right)^{4} \\
\notag & \leq C_{2}' \left(  \sum_{j=1}^{k} e^{\frac{j^{1-s}}{2(1-s)}}\frac{1}{j^{1/2}}\right)^{4} \\
\label{nouvmaj} & = O \left(  \exp \left( 2\frac{k^{1-s}}{(1-s)} \right)k^{4s}k^{-2} \right) .
\end{align}
Then, applying Lemma \ref{lemsumexp},
\begin{align*}
(*) & = O \left( \left( \frac{1-\delta}{n^{1-\delta}}\right)^{2} \left(  \sum_{k=1}^{n} \frac{1}{k^{\delta + 1/2 - s/2}} \right)^{2}\right)  = O \left( \frac{1}{n^{1-s}}\right) .
\end{align*}
\medskip

With analogous calculus, one can check that

\[
\mathbb{E}\left[ \left\| \frac{1-\delta}{n^{1-\delta}} \sum_{k=1}^{n} \frac{1}{k^{\delta +s}}e^{ - \frac{k^{1-s}}{1-s}} \left( \sum_{j=1}^{k} e^{\frac{j^{1-s}}{2(1-s)}}\left( \overline{m}_{j} - m \right) \right) \otimes \left( \sum_{j=1}^{k} e^{\frac{j^{1-s}}{2(1-s)}}\left( m_{j} - m \right) \right) \right\|_{F}^{2}\right] = O \left( \frac{1}{n^{1-s}} \right) .
\]

\medskip

\textbf{Bounding $\mathbb{E}\left[ \left\| \frac{1-\delta}{n^{1-\delta}} \sum_{k=1}^{n} \frac{1}{k^{\delta +s}}\exp \left( - \frac{k^{1-s}}{1-s}\right) \left( \sum_{j=1}^{k} e^{\frac{j^{1-s}}{2(1-s)}}\left( \overline{m}_{j} - m \right) \right) \otimes \left( \sum_{j=1}^{k} e^{\frac{j^{1-s}}{2(1-s)}}\left( \overline{m}_{j} - m \right) \right) \right\|_{F}^{2}\right]$.} First, applying Lemma \ref{lemsum} and equality (\ref{normf}), let
\begin{align*}
(**) & = \mathbb{E}\left[ \left\| \frac{1-\delta}{n^{1-\delta}} \sum_{k=1}^{n} \frac{1}{k^{\delta +s}}\exp \left( - \frac{k^{1-s}}{1-s}\right) \left( \sum_{j=1}^{k} e^{\frac{j^{1-s}}{2(1-s)}}\left( \overline{m}_{j} - m \right) \right) \otimes \left( \sum_{j=1}^{k} e^{\frac{j^{1-s}}{2(1-s)}}\left( \overline{m}_{j} - m \right) \right) \right\|_{F}^{2}\right] \\
& \leq \left( \frac{1-\delta}{n^{1-\delta}} \sum_{k=1}^{n} \frac{1}{k^{\delta +s}} e^{ - \frac{k^{1-s}}{1-s}} \sqrt{\mathbb{E}\left[ \left\| \left( \sum_{j=1}^{k} e^{\frac{j^{1-s}}{2(1-s)}}\left( \overline{m}_{j} - m \right) \right) \otimes \left( \sum_{j=1}^{k} e^{\frac{j^{1-s}}{2(1-s)}}\left( \overline{m}_{j} - m \right) \right) \right\|_{F}^{2}\right]} \right)^{2} \\
& =  \left( \frac{1-\delta}{n^{1-\delta}} \sum_{k=1}^{n} \frac{1}{k^{\delta +s}} e^{ - \frac{k^{1-s}}{1-s}} \sqrt{\mathbb{E}\left[ \left\|  \sum_{j=1}^{k} e^{\frac{j^{1-s}}{2(1-s)}}\left( \overline{m}_{j} - m  \right) \right\|_{F}^{4}\right]} \right)^{2}.
\end{align*}
Then, applying inequality (\ref{nouvmaj}) and Corollary \ref{corbn},
\begin{align*}
(**) = O \left(  \left( \frac{1-\delta}{n^{1-\delta}} \sum_{k=1}^{n} \frac{1}{k^{1+\delta -s}}  \right)^{2} \right) = O \left( \frac{1}{n^{2(1-s)}} \right) ,
\end{align*}
which concludes the proof.

\section{Proof of Lemma \ref{lemmajopasbelle}}
We just give the proof for the rate of convergence in quadratic mean, the proof of the almost sure rate of convergence is quite straightforward. Let
\begin{align*}
(\star ) & := \frac{1-\delta}{n^{1-\delta}} \sum_{k=1}^{n} \frac{1}{k^{\delta +s}}\exp \left( - \frac{k^{1-s}}{1-s}\right) \left( \sum_{j=1}^{k} e^{\frac{j^{1-s}}{2(1-s)}}\left( m_{j} - m \right) \right) \otimes \left( \sum_{j=1}^{k} e^{\frac{j^{1-s}}{2(1-s)}}\left( m_{j} - m \right) \right) - \overline{\Sigma}_{n} \\
& = \left( \frac{1-\delta}{n^{1-\delta}} - \frac{1}{\sum_{k=1}^{n} k^{-\delta}} \right)\sum_{k=1}^{n} \frac{1}{k^{\delta +s}}\exp \left( - \frac{k^{1-s}}{1-s}\right) \left( \sum_{j=1}^{k} e^{\frac{j^{1-s}}{2(1-s)}}\left( m_{j} - m \right) \right) \otimes \left( \sum_{j=1}^{k} e^{\frac{j^{1-s}}{2(1-s)}}\left( m_{j} - m \right) \right) \\
& + \frac{1}{\sum_{k=1}^{n}k^{-\delta}}\sum_{k=1}^{n} \frac{1}{k^{\delta}}\left( k^{-s}\exp \left( - \frac{k^{1-s}}{1-s}\right) - b_{k}^{-1}\right)\left( \sum_{j=1}^{k} e^{\frac{j^{1-s}}{2(1-s)}}\left( m_{j} - m \right) \right) \otimes \left( \sum_{j=1}^{k} e^{\frac{j^{1-s}}{2(1-s)}}\left( m_{j} - m \right) \right)
\end{align*}
We now bound the quadratic mean of each term on the right-hand side of previous equality. First, note that with the help of an integral test for convergence,
\begin{align*}
\frac{1}{1-\delta} \left( (n+1)^{1-\delta}  \right) = \int_{0}^{n+1}t^{-\delta}dt \geq  \sum_{k=1}^{n}k^{-\delta}   \geq \int_{1}^{n}t^{-\delta}dt = \frac{1}{1-\delta} \left( n^{1-\delta} - 1 \right) .  
\end{align*}
Then,
\begin{align*}
\left| \frac{1-\delta}{n^{1-\delta}} - \frac{1}{\sum_{k=1}^{n}k^{-\delta}} \right| & = \left| \frac{ \frac{n^{1-\delta}}{1-\delta} - \sum_{k=1}^{n}k^{-\delta}}{\frac{n^{1-\delta}}{1-\delta}  \sum_{k=1}^{n}k^{-\delta}}\right| \\
& \leq   \left( 1- \delta \right) \frac{(n+1)^{1-\delta} - n^{1-\delta} +1 }{n^{1-\delta}\left( n^{1-\delta}-1 \right)} \\
& = O \left( \frac{1}{n^{2 -2\delta}} \right) .
\end{align*}
Then, applying Lemma \ref{lemsum}, there is a positive constant $C$ such that for all $n \geq 1$,
\begin{align*}
u_{n} & := \mathbb{E}\left[ \left\| \left( \frac{1-\delta}{n^{1-\delta}} - \frac{1}{\sum_{k=1}^{n} k^{-\delta}} \right)\sum_{k=1}^{n} \frac{1}{k^{\delta +s}}e^{ - \frac{k^{1-s}}{1-s}} \left( \sum_{j=1}^{k} e^{\frac{j^{1-s}}{2(1-s)}}\left( m_{j} - m \right) \right) \otimes \left( \sum_{j=1}^{k} e^{\frac{j^{1-s}}{2(1-s)}}\left( m_{j} - m \right) \right) \right\|_{F}^{2} \right] \\
& \leq \frac{C}{n^{4-4\delta}}\left( \sum_{k=1}^{n} \frac{1}{k^{\delta +s}}\exp \left( - \frac{k^{1-s}}{1-s}\right) \sqrt{\mathbb{E}\left\|  \left( \sum_{j=1}^{k} e^{\frac{j^{1-s}}{2(1-s)}}\left( m_{j} - m \right) \right) \otimes \left( \sum_{j=1}^{k} e^{\frac{j^{1-s}}{2(1-s)}}\left( m_{j} - m \right) \right) \right\|_{F}^{2}} \right)^{2} .
\end{align*}
Furthermore, applying equality (\ref{normf})
\begin{align*}
u_{n} & \leq \frac{C}{n^{4-4\delta}}\left( \sum_{k=1}^{n} \frac{1}{k^{\delta +s}}\exp \left( - \frac{k^{1-s}}{1-s}\right) \sqrt{\mathbb{E}\left\|  \sum_{j=1}^{k} e^{\frac{j^{1-s}}{2(1-s)}}\left( m_{j} - m \right)  \right\|_{F}^{4}} \right)^{2}
\end{align*}
Finally, applying Lemma \ref{lemtech} ans since $\delta < (1+s)/2$, 
\begin{align*}
u_{n} & = O \left( \frac{C}{n^{4-4\delta}}\left( \sum_{k=1}^{n} \frac{1}{k^{\delta}}\right)^{2} \right)  =  O \left( \frac{1}{n^{2-2\delta}} \right) = o \left( \frac{1}{n^{1-s}}\right) . 
\end{align*}

\medskip

In the same way, with the help of an integral test for convergence,
\begin{align*}
b_{n}  := \sum_{k=1}^{n} \exp \left( \frac{k^{1-s}}{1-s} \right) &  \leq \int_{0}^{n} \exp \left( \frac{t^{1-s}}{(1-s)} \right) dt \\
&  \leq n^{s}\exp \left( \frac{ n^{1-s}}{(1-s)} \right) + s \int_{0}^{n} \exp \left( \frac{t^{1-s}}{(1-s)}\right) t^{s-1}dt \\
& = n^{s}\exp \left( \frac{ n^{1-s}}{1-s} \right) + sn^{2s-1}\exp \left( \frac{ n^{1-s}}{1-s} \right) + o \left( n^{2s-1}\exp \left( \frac{ n^{1-s}}{1-s} \right) \right) .
\end{align*}
Thus, one can check that there is a positive constant $c$ such that for all $n \geq 1$,
\[
b_{n} \geq n^{s}\exp \left( \frac{ n^{1-s}}{1-s} \right) + cn^{2s-1}\exp \left( \frac{ n^{1-s}}{1-s} \right)
\]
Then,
\begin{align*}
\left| \frac{1}{b_{n}} - n^{-s}\exp \left( - \frac{ n^{1-s}}{1-s} \right) \right| & =  \frac{n^{-s}\exp \left( - \frac{  n^{1-s}}{1-s} \right)}{b_{n}}\left| b_{n} - n^{s}\exp \left(  \frac{ n^{1-s}}{1-s} \right)  \right| \\
& = O \left( n^{-1}\exp \left( - \frac{ n^{1-s}}{1-s} \right) \right)
\end{align*}
Thus, applying Lemma \ref{lemsum}, there is a positive constant $C$ such that for all $n \geq 1$,
\begin{align*}
v_{n} & := \mathbb{E}\left[ \left\| \frac{1}{\sum_{k=1}^{n}k^{-\delta}}\sum_{k=1}^{n} \frac{1}{k^{\delta}}\left( k^{-s}e^{ - \frac{k^{1-s}}{1-s}} - b_{k}^{-1}\right)\left( \sum_{j=1}^{k} e^{\frac{j^{1-s}}{2(1-s)}}\left( m_{j} - m \right) \right) \otimes \left( \sum_{j=1}^{k} e^{\frac{j^{1-s}}{2(1-s)}}\left( m_{j} - m \right) \right) \right\|_{F}^{2} \right] \\
& \leq \left( \frac{1}{\sum_{k=1}^{n}k^{-\delta}} \right)^{2}\left( \sum_{k=1}^{n} \frac{1}{k^{\delta}}k^{-1}e^{ - \frac{ k^{1-s}}{1-s} } \sqrt{ \mathbb{E}\left[ \left\| \left( \sum_{j=1}^{k} e^{\frac{j^{1-s}}{2(1-s)}}\left( m_{j} - m \right) \right) \otimes \left( \sum_{j=1}^{k} e^{\frac{j^{1-s}}{2(1-s)}}\left( m_{j} - m \right) \right) \right\|_{F}^{2} \right]} \right)^{2} .
\end{align*}
Finally, applying equality (\ref{normf}) and Lemma \ref{lemtech},
\begin{align*}
v_{n} & \leq \left( \frac{1}{\sum_{k=1}^{n}k^{-\delta}} \right)^{2}\left( \sum_{k=1}^{n} \frac{1}{k^{\delta}}k^{-1}\exp \left( - \frac{ k^{1-s}}{1-s} \right) \sqrt{ \mathbb{E}\left[ \left\|  \sum_{j=1}^{k} e^{\frac{j^{1-s}}{2(1-s)}}\left( m_{j} - m \right)  \right\|_{F}^{4} \right]} \right)^{2}\\
 & = O \left( \left( \frac{1}{\sum_{k=1}^{n}k^{-\delta}} \right)^{2}\left( \sum_{k=1}^{n} \frac{1}{k^{\delta +1-s}} \right)^{2} \right) \\
& = O \left( \frac{1}{n^{2(1-s)}} \right) ,
\end{align*}
which concludes the proof.

\section{Proof of Lemma \ref{lempleinmaj}}
\begin{proof}[Proof of Lemma \ref{lempleinmaj}]
This proof is a direct application of Lemma \ref{lemtech}. In order to convince the reader, we just give one proof, and the other ones are analogous. Applying Lemma \ref{lemsum} and \ref{lemtech} as well as Corollary \ref{corbn},
\begin{align*}
\mathbb{E}\left[ \left( \frac{1}{\sum_{k=1}^{n}k^{-\delta}}\sum_{k=1}^{n}\frac{1}{k^{\delta}b_{k}}\left\| A_{1,k} \right\|^{2} \right)^{2} \right] & \leq \left(\frac{1}{ \sum_{k=1}^{n}k^{-\delta}}\right)^{2}\left( \sum_{k=1}^{n}\frac{1}{k^{\delta}b_{k}} \sqrt{ \mathbb{E}\left[  \left\| A_{1,k} \right\|^{4} \right]} \right)^{2} \\
& = O \left(  \left(\frac{1}{ \sum_{k=1}^{n}k^{-\delta}}\right)^{2}\left( \sum_{k=1}^{n}\frac{1}{k^{\delta}} k^{\alpha -s}   \right)^{2}\right) \\
& = O \left( \frac{1}{n^{2(s- \alpha)}}\right) ,
\end{align*}
which concludes the proof.
\end{proof}
We now give the "almost sure version" of Lemma \ref{lempleinmaj}.
\begin{lem} Suppose Assumptions \textbf{(A1)} to \textbf{(A5a')} hold. Then, for all $i,j \in \left\lbrace 1,2 \right\rbrace $, and for all $\gamma > 0$,
\begin{align*}
& \mathbb{E}\left[ \left( \frac{1}{\sum_{k=1}^{n}k^{-\delta}}\sum_{k=1}^{n}\frac{1}{k^{\delta}b_{k}} \left\| A_{i,k} \right\| \left\| A_{j,k} \right\| \right)^{2}\right] = o \left( \frac{( \ln n)^{\gamma}}{n^{1-s}} \right) , \\
& \mathbb{E}\left[ \left( \frac{1}{\sum_{k=1}^{n}k^{-\delta}}\sum_{k=1}^{n}\frac{1}{k^{\delta}b_{k}} \left\| A_{i,k} \right\| \left\| M_{k+1} \right\| \right)^{2}\right] = o \left( \frac{(\ln n)^{\gamma}}{n^{1-s}} \right) .
\end{align*}
\end{lem}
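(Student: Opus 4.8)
The plan is to follow the proof of Lemma~\ref{lempleinmaj} line by line wherever the iterates enter only through $A_{1,k}$ or $M_{k+1}$, and to replace the (now unavailable) high-order moment control of the Taylor-remainder term $A_{2,k}$ by the pathwise rate (\ref{vitesseasrm}); the $(\ln n)^{\gamma}$ appearing on the right-hand side is precisely the trace of the $(\ln n)^{\delta}$ carried by that rate. First I would apply the generalized Minkowski inequality (Lemma~\ref{lemsum}) to bound each left-hand side by
\[
\left( \frac{1}{\sum_{k=1}^{n}k^{-\delta}}\right)^{2}\left( \sum_{k=1}^{n}\frac{1}{k^{\delta}b_{k}}\sqrt{\mathbb{E}\left[ \left\| A_{i,k}\right\|^{2}\left\| A_{j,k}\right\|^{2}\right]}\,\right)^{2},
\]
with $\|M_{k+1}\|$ standing in for the second factor in the second family, and then use Cauchy--Schwarz on the inner expectation to reduce everything to the fourth moments $\mathbb{E}[\|A_{1,k}\|^{4}]$, $\mathbb{E}[\|M_{k+1}\|^{4}]$ and to the more delicate $\mathbb{E}[\|A_{2,k}\|^{4}]$.

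For the indices that avoid $A_{2,k}$ I would invoke the $p=2$ instance of Lemma~\ref{lemtech}, whose proof for the $A_{1}$ and martingale terms only calls on (\ref{vitlprm}) at $p\leq 2$ and is therefore valid under \textbf{(A5a')} alone. Together with Corollary~\ref{corbn} (i.e.\ $b_{k}^{-1}=O(k^{-s}a_{k}^{-2})$) this gives $b_{k}^{-1}\sqrt{\mathbb{E}[\|A_{1,k}\|^{4}]}=O(k^{\alpha-s})$ and a corresponding $O(k^{-s/2})$ for the martingale factor; summing with Lemma~\ref{lemsumexp} and using $\sum_{k\le n}k^{-\delta}\asymp n^{1-\delta}$ reproduces the clean $O(n^{-2(s-\alpha)})$ bound of Lemma~\ref{lempleinmaj}, which is $o(1/n^{1-s})$ because $s>(1+\alpha)/2$ forces $2(s-\alpha)>1-s$. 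This is \emph{a fortiori} $o((\ln n)^{\gamma}/n^{1-s})$, so these contributions carry no logarithm.

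The terms containing $A_{2,k}=\sum_{j\le k}a_{j}\Delta_{j}$ are the real issue, since a direct $L^{4}$ bound on $A_{2,k}$ would require $\mathbb{E}[\|m_{j}-m\|^{8}]$ and thus \textbf{(A5b)}, which is excluded here. Here I would exploit $\|\Delta_{j}\|\leq C\lambda_{\min}^{-2}\|m_{j}-m\|^{2}$ and the almost sure rate (\ref{vitesseasrm}): fixing $\gamma>\delta$ and setting $W:=\sup_{j}\|m_{j}-m\|^{2}\,j^{\alpha}(\ln j)^{-\gamma}$, one gets the pathwise profile $\|A_{2,k}\|\leq CW\sum_{j\le k}a_{j}(\ln j)^{\gamma}j^{-\alpha}=O\!\left(Wa_{k}k^{s-\alpha}(\ln k)^{\gamma}\right)$ via Lemma~\ref{lemsumexp}. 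Inserting this deterministic weight for one factor while keeping the $L^{2}$ control $\mathbb{E}[\|A_{2,k}\|^{2}]=O(a_{k}^{2}k^{2(s-\alpha)})$ (the $p=1$ case of Lemma~\ref{lemtech}, valid under \textbf{(A5a')}) for the partner, the inner expectation acquires exactly one extra $(\ln k)^{2\gamma}$ relative to Lemma~\ref{lempleinmaj}; the summation in $k$ then proceeds as before, the conditions (\ref{condpas}) on $\alpha,s,\delta$ keeping the exponent strictly below $-(1-s)$ and leaving only the logarithmic loss.

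I expect the decisive difficulty to be the reconciliation of this pathwise control with the \emph{expectation} demanded by the statement, most acutely for the $(i,j)=(2,2)$ term in which both factors are remainders: one cannot simply take the expectation of the pathwise bound, since that would cost $\mathbb{E}[W^{2}]=\mathbb{E}[\sup_{j}\|m_{j}-m\|^{4}j^{2\alpha}(\ln j)^{-2\gamma}]$ at best, and a fourth moment of $A_{2,k}$ at worst. The argument must therefore pair each remainder factor, through a Cauchy--Schwarz step, with a genuinely fourth-moment-bounded companion ($A_{1,k}$ or $M_{k+1}$), and rely on a Doob-type maximal inequality that upgrades (\ref{vitlprm}) at $p=2$ into the integrability $\mathbb{E}[W^{2}]<\infty$ under \textbf{(A5a')}; it is the logarithm in this maximal bound that forces the $(\ln n)^{\gamma}$ in the conclusion. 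Once that bookkeeping is in place, collecting the exponents through Lemma~\ref{lemsumexp} and Corollary~\ref{corbn} closes the estimate at $o((\ln n)^{\gamma}/n^{1-s})$.
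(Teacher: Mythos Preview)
Your diagnosis of the difficulty is accurate, but it originates in taking the statement too literally. The paper introduces this lemma explicitly as ``the almost sure version of Lemma~\ref{lempleinmaj}'', and the factor $(\ln n)^{\gamma}$ on the right is the signature of the pathwise rate (\ref{vitesseasrm}); the $\mathbb{E}[\,\cdot\,]$ wrapping the left-hand sides is a copy--paste artefact from Lemma~\ref{lempleinmaj} and is not meant to be there. The intended claim is
\[
\left( \frac{1}{\sum_{k=1}^{n}k^{-\delta}}\sum_{k=1}^{n}\frac{1}{k^{\delta}b_{k}} \left\| A_{i,k} \right\| \left\| A_{j,k} \right\| \right)^{2} = o \left( \frac{( \ln n)^{\gamma}}{n^{1-s}} \right) \quad a.s.,
\]
and similarly for the second line. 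The paper's proof---declared ``quite close to the one of Lemma~\ref{lempleinmaj}''---is to replace the $L^{p}$ bounds of Lemma~\ref{lemtech} by their almost sure counterparts (the paper announces, right after stating Lemma~\ref{lemtech}, an analogous lemma giving the asymptotic almost sure behaviour of $A_{1,k},A_{2,k},M_{k+1}$), which hold under \textbf{(A1)}--\textbf{(A5a')} because they rest on (\ref{vitesseasrm}) rather than on (\ref{vitlprm}). One then argues \emph{pathwise} throughout, and the summation via Lemma~\ref{lemsumexp} and Corollary~\ref{corbn} goes through exactly as in your treatment of the $A_{1}$ and $M$ terms. In particular, your bound $\|A_{2,k}\|=O\bigl(a_{k}k^{s-\alpha}(\ln k)^{\gamma}\bigr)$ a.s.\ is precisely what is needed; no passage back to expectations is required.

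If one insists on the expectation reading, you have correctly located the obstruction: any term containing $A_{2,k}$ forces, through Cauchy--Schwarz, control of $\mathbb{E}[\|A_{2,k}\|^{4}]$, hence of $\mathbb{E}[\|m_{j}-m\|^{8}]$, which is unavailable without \textbf{(A5b)}. Your proposed repair via a maximal random variable $W=\sup_{j}\|m_{j}-m\|^{2}j^{\alpha}(\ln j)^{-\gamma}$ and an appeal to $\mathbb{E}[W^{2}]<\infty$ is not supplied anywhere in the paper, and neither (\ref{vitlprm}) at $p=2$ nor a Doob inequality for the non-martingale sequence $(m_{j}-m)$ gives it for free. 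So under the literal reading there is a genuine gap---but that is not the statement the paper is proving.
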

The proof is not given since it is quite closed to the one of Lemma \ref{lempleinmaj}.

\section{Dealing with Assumption (A6) for the geometric median}
In what follows, we consider that assumption \textbf{(H2)} in \cite{godichon2016} is fulfilled, i.e: 
\begin{itemize}
\item[\textbf{(H2)}] The random variable $X$ is not concentrated around single points: for all positive constant $A$, there is a positive constant $C_{A}$ such that for all $h \in \mathcal{B}\left( 0 , A \right)$,
\[
\mathbb{E}\left[ \frac{1}{\left\| X-h \right\|^{2}} \right] \leq C_{A}.
\]
\end{itemize}
Then, for all $h \in H$, let us define the function $\varphi_{h} : [ 0,1 ] \longrightarrow \mathcal{S}(H)$, defined for all $t \in [0,1 ]$ by
\begin{align*}
\varphi_{h} (t) & = \mathbb{E}\left[ \nabla_{h}g \left( X , m + t \left( h-m \right) \right) \otimes \nabla_{h}g \left( X , m + t \left( h-m \right) \right) \right] \\
& = \mathbb{E}\left[ \frac{X-m+t \left( h-m \right)}{\left\| X-m+t \left( h-m^{v} \right) \right\|} \otimes \frac{X-m^{v}+t \left( h-m \right)}{\left\| X-m+t \left( h-m \right) \right\|} \right] .
\end{align*}
In what follows, we will denote $A(t):=X-m+t \left( h-m \right)$. Note that 
\begin{align*}
& \varphi_{h}(0) = \mathbb{E}\left[ \nabla_{h}g_{v} \left( X , m \right) \otimes \nabla_{h}g_{v} \left( X , m \right) \right] & \varphi_{h}(1) = \mathbb{E}\left[ \nabla_{h}g_{v} \left( X , h \right) \otimes \nabla_{h}g_{v} \left( X , h \right) \right]
\end{align*}
and that the functional $\varphi_{h}$ is differentiable, and its derivative is defined for all $t \in [0,1]$ by
\begin{align*}
\varphi_{h}'(t) & = -2\mathbb{E}\left[ \frac{1}{\left\| A(t) \right\|^{4} }  \left\langle h-m , A(t) \right\rangle  A(t)  \otimes  A(t)  \right] + \mathbb{E}\left[ \frac{1}{\left\| A(t) \right\|^{2}}\left( h-m \right) \otimes A(t) \right] \\
& +\mathbb{E}\left[ \frac{1}{\left\| A(t) \right\|^{2}} A(t) \otimes \left( h-m \right) \right] .
\end{align*}
Then, applying Cauchy-Schwarz's inequality, 
\begin{align*}
\left\| \varphi_{h}'(t) \right\|_{F} & \leq 4 \mathbb{E}\left[ \frac{1}{\left\| A(t) \right\|}\right] \left\| m - h \right\| .
\end{align*}
Thus, let $\epsilon > 0$, thanks to Assumption \textbf{(H2)}, there is a positive constant $C_{\left\| m \right\| + \epsilon }$ such that for all $t \in [0,1]$ and for all $h \in \mathcal{B}\left( m , \epsilon \right)$, 
\[
\left\| \varphi_{h}'(t) \right\|_{F} \leq C_{\left\| m \right\| + \epsilon } \left\| m - h \right\| .
\]
Finally, 
\begin{align*}
\left\| \mathbb{E}\left[ \nabla_{h}g_{v} \left( X , m \right) \otimes \nabla_{h}g_{v} \left( X , m \right) \right] - \mathbb{E}\left[ \nabla_{h}g_{v} \left( X , h \right) \otimes \nabla_{h}g_{v} \left( X , h \right) \right] \right\|_{F} & = \left\| \varphi_{h}(1) - \varphi_{h}(0) \right\|_{F} \\
& = \left\| \int_{0}^{1}\varphi_{h}'(t) dt \right\|_{F} \\
& \leq C_{\left\| m \right\| + \epsilon } \left\| m - h \right\| .
\end{align*}

\section{Technical lemmas}
In order to simplify the proof, we recall or give some technical lemmas. The following one ensures that the sequence $\left( \xi_{n} \right)$ admits uniformly bounded $2p$-moments.
\begin{lem}[\cite{godichon2016}] \label{lemmajxi}
Suppose assumptions \textbf{(A1)} to \textbf{(A5a')} hold, there is a positive constant $K$ such that for all $n \geq 1 $,
\[
\mathbb{E}\left[ \left\| \xi_{n+1} \right\|^{4} \right] \leq K
\]
Moreover, suppose assumption \textbf{(A5b)} holds too. Then, for all positive integer $p$, there is a positive constant $K_{p}$ such that for all $n \geq 1 $,
\[
\mathbb{E}\left[ \left\| \xi_{n+1} \right\|^{2p} \right] \leq K_{p}
\]
As a particular case, since for all eigenvalue $\lambda$ of $\Gamma_{m}$, $0 < \lambda_{\min} \leq \lambda \leq C$, for all $n \geq 1$,
\[
\mathbb{E}\left[ \left\| \Xi_{n+1} \right\|^{2p} \right] \leq K_{p}\lambda_{\min}^{-2p}.
\]
\end{lem}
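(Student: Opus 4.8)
The plan is to exploit the martingale-difference structure of $\left( \xi_{n} \right)$, as introduced around (\ref{decphi}), together with the moment bounds on the gradient coming from \textbf{(A5a')}/\textbf{(A5b)} and the already established $L^{2p}$ bounds on $\left\| m_{n} - m \right\|$. First I would recall that $m_{n}$ is $\mathcal{F}_{n}$-measurable while $X_{n+1}$ is independent of $\mathcal{F}_{n}$, so that the centering term in $\xi_{n+1} = \Phi \left( m_{n} \right) - \nabla_{h} g \left( X_{n+1} , m_{n} \right)$ is exactly the conditional expectation $\Phi \left( m_{n} \right) = \nabla G \left( m_{n} \right) = \mathbb{E}\left[ \nabla_{h} g \left( X_{n+1} , m_{n} \right) \mid \mathcal{F}_{n} \right]$. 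Hence, by the triangle inequality, convexity of $t \mapsto t^{2p}$, and conditional Jensen's inequality applied to this conditional expectation,
\[
\mathbb{E}\left[ \left\| \xi_{n+1} \right\|^{2p} \mid \mathcal{F}_{n} \right] \leq 2^{2p-1}\left( \left\| \Phi \left( m_{n} \right) \right\|^{2p} + \mathbb{E}\left[ \left\| \nabla_{h} g \left( X_{n+1} , m_{n} \right) \right\|^{2p} \mid \mathcal{F}_{n} \right] \right) \leq 2^{2p}\,\mathbb{E}\left[ \left\| \nabla_{h} g \left( X_{n+1} , m_{n} \right) \right\|^{2p} \mid \mathcal{F}_{n} \right].
\]

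Next I would invoke the moment assumption on the gradient. Since $m_{n}$ is $\mathcal{F}_{n}$-measurable and $X_{n+1}$ is independent of $\mathcal{F}_{n}$, the last conditional expectation is $\mathbb{E}\left[ \left\| \nabla_{h} g \left( X , h \right) \right\|^{2p} \right]$ evaluated at $h = m_{n}$; for $2p = 4$ Assumption \textbf{(A5a')} bounds it by $L_{2}\left( 1 + \left\| m_{n} - m \right\|^{4} \right)$, and for general $p$ Assumption \textbf{(A5b)} bounds it by $L_{p}\left( 1 + \left\| m_{n} - m \right\|^{2p} \right)$. Taking full expectations and using that $\mathbb{E}\left[ \left\| m_{n} - m \right\|^{4} \right]$ is uniformly bounded (proved in \cite{godichon2016} under \textbf{(A5a')}), respectively $\mathbb{E}\left[ \left\| m_{n} - m \right\|^{2p} \right] \leq C_{p}/n^{p\alpha} \leq C_{p}$ from (\ref{vitlprm}) under \textbf{(A5b)}, gives
\[
\mathbb{E}\left[ \left\| \xi_{n+1} \right\|^{2p} \right] \leq 2^{2p} L_{p}\left( 1 + \sup_{n \geq 1}\mathbb{E}\left[ \left\| m_{n} - m \right\|^{2p} \right] \right) =: K_{p},
\]
uniformly in $n$, the case $2p = 4$ being handled in exactly the same way and yielding the constant $K$. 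For the last assertion, since $\Xi_{n+1} = \Gamma_{m}^{-1}\left( \xi_{n+1} \right)$ and every eigenvalue $\lambda$ of $\Gamma_{m}$ satisfies $\lambda \geq \lambda_{\min} > 0$ by \textbf{(A2)}--\textbf{(A3)}, we have $\left\| \Gamma_{m}^{-1} \right\|_{op} \leq \lambda_{\min}^{-1}$, whence $\left\| \Xi_{n+1} \right\|^{2p} \leq \lambda_{\min}^{-2p}\left\| \xi_{n+1} \right\|^{2p}$ and therefore $\mathbb{E}\left[ \left\| \Xi_{n+1} \right\|^{2p} \right] \leq K_{p}\lambda_{\min}^{-2p}$.

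There is no genuine obstacle here: the argument is entirely routine, being a combination of the martingale-difference decomposition, conditional Jensen, and the moment control already available. The only points requiring care are to invoke the correct assumption for each range of exponents (\textbf{(A5a')} for fourth moments, \textbf{(A5b)} for all even moments) and to notice that only \emph{boundedness}, rather than decay, of $\mathbb{E}\left[ \left\| m_{n} - m \right\|^{2p} \right]$ is needed for the uniform-in-$n$ conclusion — a decaying bound such as (\ref{vitlprm}) is more than enough.
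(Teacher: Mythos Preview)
Your argument is correct and is exactly the natural route: bound $\left\| \xi_{n+1} \right\|^{2p}$ via the triangle inequality and conditional Jensen, invoke \textbf{(A5a')}/\textbf{(A5b)} to control the gradient moments conditionally on $\mathcal{F}_{n}$, and then use the uniform $L^{2p}$ bounds on $\left\| m_{n} - m \right\|$ from \cite{godichon2016}. The paper itself does not give a proof of this lemma --- it is quoted directly from \cite{godichon2016} --- so there is no ``paper's proof'' to compare against, but your sketch is precisely the standard argument one expects there; the one point worth being explicit about is that the uniform bound on $\mathbb{E}\left[ \left\| m_{n} - m \right\|^{4} \right]$ under \textbf{(A1)}--\textbf{(A5a')} alone (i.e.\ without \textbf{(A5b)}) is indeed established in \cite{godichon2016}, since (\ref{vitlprm}) as stated in the present paper assumes \textbf{(A5b)}.
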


\begin{cor}\label{lemmajxisigma} Suppose assumptions \textbf{(A1)} to \textbf{(A6b)} hold. Then, there is a positive constant $C$ such that for all $n \geq 1$,
\[
\left\| \mathbb{E}\left[ \Xi_{n+1} \otimes \Xi_{n+1}|\mathcal{F}_{n} \right] - \Sigma  \right\|_{F}^{2} \leq C \left\| m_{n} - m \right\|^{2}
\]
\end{cor}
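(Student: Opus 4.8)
The plan is to compute the conditional operator $\mathbb{E}\left[ \Xi_{n+1}\otimes\Xi_{n+1} | \mathcal{F}_n \right]$ in closed form and then compare it with $\Sigma$. Since $\Xi_{n+1}=\Gamma_{m}^{-1}\xi_{n+1}$ and $\Gamma_{m}$ (hence $\Gamma_{m}^{-1}$) is self-adjoint, one has the operator identity $\Xi_{n+1}\otimes\Xi_{n+1}=\Gamma_{m}^{-1}\left( \xi_{n+1}\otimes\xi_{n+1}\right)\Gamma_{m}^{-1}$, so it is enough to understand $\mathbb{E}\left[ \xi_{n+1}\otimes\xi_{n+1} | \mathcal{F}_n \right]$. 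Recalling that $\xi_{n+1}=\Phi(m_n)-\nabla_{h}g(X_{n+1},m_n)$, that $\Phi(m_n)=\nabla G(m_n)$ is $\mathcal{F}_n$-measurable, and that $\mathbb{E}\left[ \nabla_{h}g(X_{n+1},m_n) | \mathcal{F}_n \right]=\nabla G(m_n)=\Phi(m_n)$ (because $X_{n+1}$ is independent of $\mathcal{F}_n$), I would expand the tensor product into its four terms and take the conditional expectation; the three terms involving a factor $\Phi(m_n)$ then contribute $-\Phi(m_n)\otimes\Phi(m_n)$ in total, leaving
\[
\mathbb{E}\left[ \xi_{n+1}\otimes\xi_{n+1} | \mathcal{F}_n \right]=\varphi(m_n)-\Phi(m_n)\otimes\Phi(m_n),
\]
with $\varphi$ the functional of Assumption \textbf{(A6)}. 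Hence $\mathbb{E}\left[ \Xi_{n+1}\otimes\Xi_{n+1} | \mathcal{F}_n \right]=\Gamma_{m}^{-1}\left( \varphi(m_n)-\Phi(m_n)\otimes\Phi(m_n)\right)\Gamma_{m}^{-1}$.

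Since $\Sigma=\Gamma_{m}^{-1}\Sigma'\Gamma_{m}^{-1}$ and $\Sigma'=\varphi(m)$, subtracting and factoring out the two copies of $\Gamma_{m}^{-1}$ gives
\[
\mathbb{E}\left[ \Xi_{n+1}\otimes\Xi_{n+1} | \mathcal{F}_n \right]-\Sigma=\Gamma_{m}^{-1}\left( \varphi(m_n)-\varphi(m)-\Phi(m_n)\otimes\Phi(m_n)\right)\Gamma_{m}^{-1}.
\]
I would then take Frobenius norms, use $\left\| ABA\right\|_{F}\le\left\| A\right\|_{op}^{2}\left\| B\right\|_{F}$ together with $\left\| \Gamma_{m}^{-1}\right\|_{op}\le 2/\lambda_{\min}$ (a consequence of \textbf{(A3)}), and the identity $\left\| h\otimes h\right\|_{F}=\left\| h\right\|^{2}$, thereby reducing the statement to bounding $\left\| \varphi(m_n)-\varphi(m)\right\|_{F}+\left\| \Phi(m_n)\right\|^{2}$ by a constant times $\left\| m_n-m\right\|$; squaring the resulting inequality yields the claimed $\left\| m_n-m\right\|^{2}$.

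For this last estimate I distinguish whether $m_n$ lies in $\mathcal{B}\left( m,\epsilon\right)$ or not. On $\left\{ \left\| m_n-m\right\|<\epsilon\right\}$, Assumption \textbf{(A6b)} gives $\left\| \varphi(m_n)-\varphi(m)\right\|_{F}\le C_{\epsilon}'\left\| m_n-m\right\|$, while the local Lipschitz estimate $\left\| \Phi(m_n)\right\|=\left\| \nabla G(m_n)-\nabla G(m)\right\|\le C\left\| m_n-m\right\|$ already used in the proof of Theorem~\ref{theotlc} (valid on $\mathcal{B}\left( m,\epsilon\right)$ by \textbf{(A2)}) gives $\left\| \Phi(m_n)\right\|^{2}\le C^{2}\epsilon\left\| m_n-m\right\|$, which settles this event. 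On $\left\{ \left\| m_n-m\right\|\ge\epsilon\right\}$ one uses instead the crude polynomial growth bounds $\left\| \varphi(h)\right\|_{F}\le L_{1}(1+\left\| h-m\right\|^{2})$ (the remark following \textbf{(A6)}) and $\left\| \Phi(h)\right\|^{2}\le 2L_{1}(1+\left\| h-m\right\|^{2})$ (from \textbf{(A5a)}), which feed into the later estimates only through higher moments of $\left\| m_n-m\right\|$, themselves controlled at the relevant rate by (\ref{vitlprm}). The main obstacle is really the first step — identifying $\mathbb{E}\left[ \xi_{n+1}\otimes\xi_{n+1} | \mathcal{F}_n \right]$ correctly and keeping track of the two $\Gamma_{m}^{-1}$ factors so that $\Sigma$ is subtracted cleanly; once that reduction is in place, the rest is a routine case split on estimates already available in the paper.
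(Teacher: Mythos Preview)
Your decomposition is exactly the one the paper has in mind: the paper omits the proof and simply says it is ``a direct application of assumption \textbf{(A6b)} and Lemma~\ref{lemmajxi}'', and your computation $\mathbb{E}[\xi_{n+1}\otimes\xi_{n+1}\mid\mathcal{F}_n]=\varphi(m_n)-\Phi(m_n)\otimes\Phi(m_n)$ together with the sandwich by $\Gamma_m^{-1}$ is precisely how one unpacks that sentence. Two small remarks. First, $\|\Gamma_m^{-1}\|_{op}\le 1/\lambda_{\min}$ (not $2/\lambda_{\min}$): the factor $1/2$ in \textbf{(A3)} concerns $\Gamma_h$ for $h\neq m$, not $\Gamma_m$ itself. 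Second, on the event $\{\|m_n-m\|\ge\epsilon\}$ you do not actually establish the pointwise bound $\|\cdot\|_F\le C\|m_n-m\|$ of the corollary --- the quadratic growth of $\varphi$ and of $\|\Phi\|^2$ only gives $\|\cdot\|_F\le C'\|m_n-m\|^2$ there --- and you instead argue that this is harmless for the downstream moment estimates. That is true (and is how the corollary is used in the proof of Proposition~\ref{vitsigmanbarre}), but strictly speaking it proves a slightly weaker statement than the one displayed; the paper is equally informal on this point.
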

The proof is not given since it is a direct application of assumption \textbf{(A6b)} and Lemma~\ref{lemmajxi}. The following lemma gives upper bounds of the sums of exponential terms which appears in several proofs. 
\begin{lem}\label{lemsumexp}
For all constants $a,b,c$ such that $a \in ( 0,1)$, there is a positive constant $C_{a,b,c}$ such that 
\[
\sum_{k=1}^{n}k^{-a}k^{b}\exp\left( ck^{1-a} \right) \leq C_{a,b,c}n^{b}\exp \left(  cn^{1-a} \right) . 
\]
\end{lem}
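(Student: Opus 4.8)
The plan is to prove the estimate in the only regime that occurs in the applications, namely $c>0$ (in every use below $c$ equals $\tfrac{1}{1-s}$ or $\tfrac{1}{2(1-s)}$, hence is positive), by comparing the sum with an integral and then evaluating that integral by a single integration by parts with an absorption step. Set $f(t):=t^{b-a}e^{ct^{1-a}}$ for $t\geq 1$. A direct differentiation gives
\[
f'(t)=e^{ct^{1-a}}\,t^{b-a-1}\bigl((b-a)+c(1-a)t^{1-a}\bigr),
\]
and since $c(1-a)>0$ while $t^{1-a}\to\infty$, the bracket is eventually positive; thus there is an integer $t_0\geq 1$, depending only on $a,b,c$, such that $f$ is nondecreasing on $[t_0,\infty)$. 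For such $t_0$ one has $f(k)\leq\int_k^{k+1}f(t)\,dt$ for every integer $k\geq t_0$, so that $\sum_{k=t_0}^{n}f(k)\leq\int_{t_0}^{n+1}f(t)\,dt$.

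Next I would bound the integral. Writing $t^{b-a}e^{ct^{1-a}}=t^{b}\cdot t^{-a}e^{ct^{1-a}}$ and using $\frac{d}{dt}e^{ct^{1-a}}=c(1-a)t^{-a}e^{ct^{1-a}}$, integration by parts yields
\[
\int_{t_0}^{N}t^{b-a}e^{ct^{1-a}}\,dt=\frac{1}{c(1-a)}\bigl[t^{b}e^{ct^{1-a}}\bigr]_{t_0}^{N}-\frac{b}{c(1-a)}\int_{t_0}^{N}t^{b-1}e^{ct^{1-a}}\,dt .
\]
The error integral is handled by absorption: since $t^{b-1}e^{ct^{1-a}}=t^{a-1}\,(t^{b-a}e^{ct^{1-a}})$ with $a-1<0$, enlarging $t_0$ so that $\frac{|b|}{c(1-a)}t_0^{a-1}\leq\tfrac12$ gives $\frac{|b|}{c(1-a)}\int_{t_0}^{N}t^{b-1}e^{ct^{1-a}}\,dt\leq\tfrac12\int_{t_0}^{N}t^{b-a}e^{ct^{1-a}}\,dt$. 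Dropping the (negative) boundary contribution at $t_0$ and reabsorbing the half-integral yields $\int_{t_0}^{N}t^{b-a}e^{ct^{1-a}}\,dt\leq\frac{2}{c(1-a)}\,N^{b}e^{cN^{1-a}}$. Taking $N=n+1$ and applying the elementary bounds $(n+1)^{1-a}-n^{1-a}\leq 1-a$ (mean value theorem) and $(n+1)^{b}\leq 2^{|b|}n^{b}$ for $n\geq 1$ converts this into $\int_{t_0}^{n+1}f\leq C'\,n^{b}e^{cn^{1-a}}$ for a constant $C'=C'(a,b,c)$.

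Finally one adds back the finitely many omitted terms $\sum_{k=1}^{t_0-1}f(k)=:C_1(a,b,c)$. Because $c>0$, the map $n\mapsto n^{b}e^{cn^{1-a}}$ is continuous, strictly positive on $[1,\infty)$ and tends to $+\infty$, hence is bounded below by some $c_2=c_2(a,b,c)>0$, so $C_1\leq(C_1/c_2)\,n^{b}e^{cn^{1-a}}$ for all $n\geq 1$; combining gives the claim with $C_{a,b,c}=C'+C_1/c_2$. The only genuinely delicate point is the integration-by-parts absorption step: the threshold $t_0$ must be chosen large enough both to make $f$ monotone and to let the lower-order integral produced by differentiating $t^{b}$ be swallowed by half of the original integral; everything else is routine.
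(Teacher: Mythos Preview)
Your proof is correct and follows exactly the route the paper indicates (``a direct application of an integral test for convergence''): the paper omits the argument entirely, and you have supplied it via monotonicity, integral comparison, and a single integration by parts with absorption. Your restriction to $c>0$ is appropriate and in fact necessary---the statement as printed fails for $c=0$ (take $b=a$, so the left side is $n$ while the right is $Cn^{a}$ with $a<1$)---and, as you note, every invocation of the lemma in the paper has $c$ equal to $\tfrac{1}{1-s}$, $\tfrac{1}{2(1-s)}$, or $\tfrac{2}{1-s}$, all positive.
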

The proof is not given it is a direct application of an integral test for convergence. As a corollary, one can obtain the following bound (lower and upper) of $b_{n}$.
\begin{cor}\label{corbn}
There are positive constants $c,C$ such that for all $n \geq 1$,
\[
c n^{s}\exp \left( \frac{n^{1-s}}{1-s} \right) \leq b_{n} \leq C n^{s}\exp \left( \frac{n^{1-s}}{1-s} \right) .
\]
\end{cor}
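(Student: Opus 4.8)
The plan is to obtain both inequalities from the elementary comparison between $b_n=\sum_{k=1}^n\exp\!\left(\frac{k^{1-s}}{1-s}\right)$ and the integral $\int_0^n\exp\!\left(\frac{t^{1-s}}{1-s}\right)dt$, combined with Lemma \ref{lemsumexp}.

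For the upper bound I would simply invoke Lemma \ref{lemsumexp}. Writing $\exp\!\left(\frac{k^{1-s}}{1-s}\right)=k^{-s}\,k^{s}\,\exp\!\left(\frac{k^{1-s}}{1-s}\right)$ and applying that lemma with $a=s\in(0,1)$, $b=s$ and $c=\frac{1}{1-s}$ immediately produces a constant $C>0$ with $b_n\le C\,n^{s}\exp\!\left(\frac{n^{1-s}}{1-s}\right)$ for all $n\ge 1$.

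For the lower bound, since $t\mapsto\exp\!\left(\frac{t^{1-s}}{1-s}\right)$ is increasing we have $\exp\!\left(\frac{k^{1-s}}{1-s}\right)\ge\int_{k-1}^{k}\exp\!\left(\frac{t^{1-s}}{1-s}\right)dt$, hence $b_n\ge\int_0^n\exp\!\left(\frac{t^{1-s}}{1-s}\right)dt$. Using $\frac{d}{dt}\exp\!\left(\frac{t^{1-s}}{1-s}\right)=t^{-s}\exp\!\left(\frac{t^{1-s}}{1-s}\right)$, I would integrate by parts:
\[
\int_0^n\exp\!\left(\frac{t^{1-s}}{1-s}\right)dt=\int_0^n t^{s}\,\frac{d}{dt}\exp\!\left(\frac{t^{1-s}}{1-s}\right)dt=n^{s}\exp\!\left(\frac{n^{1-s}}{1-s}\right)-s\int_0^n t^{s-1}\exp\!\left(\frac{t^{1-s}}{1-s}\right)dt .
\]
The remaining point is that the correction term is of strictly smaller order: one more integration by parts (or Lemma \ref{lemsumexp} applied to $t^{s-1}\exp(\cdot)=t^{2s-1}\cdot t^{-s}\exp(\cdot)$) shows $\int_0^n t^{s-1}\exp\!\left(\frac{t^{1-s}}{1-s}\right)dt=O\!\left(n^{2s-1}\exp\!\left(\frac{n^{1-s}}{1-s}\right)\right)$, and $2s-1<s$ since $s<1$. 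Thus there is $n_0$ with $\int_0^n\exp\!\left(\frac{t^{1-s}}{1-s}\right)dt\ge\frac12\,n^{s}\exp\!\left(\frac{n^{1-s}}{1-s}\right)$ for $n\ge n_0$; shrinking the constant to absorb the finitely many indices $n<n_0$ yields $b_n\ge c\,n^{s}\exp\!\left(\frac{n^{1-s}}{1-s}\right)$ for all $n\ge1$.

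No step is a genuine obstacle; the only care needed is in tracking that the integration-by-parts remainder is negative but of order $n^{2s-1}\exp(\cdot)=o\!\left(n^{s}\exp(\cdot)\right)$. As an alternative avoiding integration by parts, one could bound $b_n$ below by the sum of its last $\lfloor n^{s}/2\rfloor$ terms and use a mean-value estimate on $n^{1-s}-k^{1-s}$ to show each such term is $\ge e^{-1}\exp\!\left(\frac{n^{1-s}}{1-s}\right)$ for $n$ large, which gives the same lower bound up to the constant.
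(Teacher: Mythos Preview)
Your proposal is correct and follows essentially the same route as the paper: the upper bound is exactly the application of Lemma \ref{lemsumexp} the paper has in mind, and the lower bound via the integral comparison plus one integration by parts (with the $n^{2s-1}\exp(\cdot)$ remainder) is precisely the computation the paper carries out explicitly when it needs the finer estimate on $b_n$ in the proof of Lemma \ref{lemmajopasbelle}. Your alternative argument using the last $\lfloor n^{s}/2\rfloor$ terms is a pleasant extra, but the integration-by-parts route already matches the paper.
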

The following lemma is really useful in the proof of Theorem \ref{tlcgrad}.
\begin{lem}[\cite{CG2015}]
\label{sumexppart} Let $\alpha,\beta $ be non-negative constants such that $0<\alpha<1$, and $\left( u_{n}\right)$, $\left( v_{n} \right)$ be two sequences defined for all $n \geq 1$ by
\begin{align*}
u_{n} & := \frac{c_{u}}{n^{\alpha}}, & v_{n}:=\frac{c_{v}}{n^{\beta}},
\end{align*}
with $c_{u},c_{v} > 0$. Thus, there is a positive constant $c_{0}$ such that for all $n \geq 1$,
\begin{align}
 & \sum_{k=1}^{n}e^{-\sum_{j=k}^{n}u_{j}}u_{k}v_{k}  = O \left( v_{n}\right) .
\end{align}
\end{lem}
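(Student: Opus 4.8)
The plan is to prove the sharper, cleaner inequality that there is a constant $c_{0}>0$ with
\[
S_{n}:=\sum_{k=1}^{n}e^{-\sum_{j=k}^{n}u_{j}}u_{k}v_{k}\ \le\ c_{0}v_{n}\qquad\text{for all }n\ge 1,
\]
which certainly implies $S_{n}=O(v_{n})$. The whole argument is an induction on $n$ powered by a single one–step identity. Isolating the term $k=n$ (for which $\sum_{j=n}^{n}u_{j}=u_{n}$) and factoring $e^{-u_{n}}$ out of the remaining sum gives
\[
S_{n}=e^{-u_{n}}\Bigl(u_{n}v_{n}+S_{n-1}\Bigr),
\]
with the convention $S_{0}=0$; this is the recursion that drives everything.

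Assume inductively that $S_{n-1}\le c_{0}v_{n-1}$. Then $S_{n}\le e^{-u_{n}}u_{n}v_{n}+c_{0}e^{-u_{n}}v_{n-1}$, so it suffices to establish $e^{-u_{n}}u_{n}v_{n}\le c_{0}\bigl(v_{n}-e^{-u_{n}}v_{n-1}\bigr)$. The key elementary estimate is that $v_{n}-e^{-u_{n}}v_{n-1}\ge \tfrac12 u_{n}v_{n}$ for all large $n$, equivalently $e^{-u_{n}}\dfrac{v_{n-1}}{v_{n}}\le 1-\tfrac12 u_{n}$. To see this, write $\dfrac{v_{n-1}}{v_{n}}=\bigl(1+\tfrac1{n-1}\bigr)^{\beta}=1+O(1/n)$ and use $e^{-u_{n}}=1-u_{n}+O(u_{n}^{2})$, whence $e^{-u_{n}}\frac{v_{n-1}}{v_{n}}=1-u_{n}+O(1/n)+O(u_{n}^{2})$ as $n\to\infty$. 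Since $u_{n}=c_{u}n^{-\alpha}$ with $\alpha<1$, both $1/n$ and $u_{n}^{2}$ are $o(u_{n})$, so there is $n_{0}$ with $e^{-u_{n}}\frac{v_{n-1}}{v_{n}}\le 1-\tfrac12 u_{n}$ for all $n>n_{0}$; hence $e^{-u_{n}}u_{n}v_{n}\le u_{n}v_{n}\le 2\bigl(v_{n}-e^{-u_{n}}v_{n-1}\bigr)$, and the induction step closes as soon as $c_{0}\ge 2$.

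It remains to fix the constant so as to cover the finitely many indices $n\le n_{0}$: each such $S_{n}$ is a finite sum of positive terms and $v_{n}>0$, so $c_{0}:=\max\bigl\{2,\ \max_{1\le n\le n_{0}}S_{n}/v_{n}\bigr\}$ is finite, the base cases $n\le n_{0}$ hold by construction, and the step above handles $n>n_{0}$. This gives $S_{n}\le c_{0}v_{n}$ for every $n$, which is the claim. The only non-routine point is the comparison $e^{-u_{n}}v_{n-1}/v_{n}\le 1-u_{n}/2$: this is precisely where the hypothesis $\alpha<1$ enters, since the exponential contraction rate $u_{n}\sim n^{-\alpha}$ must dominate the polynomial drift $v_{n-1}/v_{n}=1+O(1/n)$ of the weights; the rest is Taylor expansion and bookkeeping. (An alternative route, comparing $S_{n}$ with $c_{v}e^{-w(n)}\int_{1}^{n}e^{w(t)}w'(t)t^{-\beta}\,dt$ where $w(t)=\tfrac{c_{u}}{1-\alpha}t^{1-\alpha}$ and integrating by parts, works equally well but is messier.)
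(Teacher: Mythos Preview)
Your proof is correct. The one-step recursion $S_{n}=e^{-u_{n}}(u_{n}v_{n}+S_{n-1})$ is valid, the asymptotic comparison $e^{-u_{n}}v_{n-1}/v_{n}\le 1-\tfrac12 u_{n}$ for large $n$ follows exactly as you say (this is where $\alpha<1$ is used, so that $1/n=o(u_{n})$), and the choice of $c_{0}$ absorbing the finitely many initial indices closes the induction cleanly.

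As for comparison: the paper does not prove this lemma at all; it simply cites it from \cite{CG2015}. The standard proof in that reference proceeds via the integral-comparison and integration-by-parts route that you sketch parenthetically at the end, bounding the sum by something of the form $e^{-w(n)}\int_{1}^{n}e^{w(t)}w'(t)t^{-\beta}\,dt$ with $w(t)=\tfrac{c_{u}}{1-\alpha}t^{1-\alpha}$ and then integrating by parts. Your inductive argument is genuinely different and arguably cleaner: it avoids any continuous approximation and yields directly the sharper uniform bound $S_{n}\le c_{0}v_{n}$ rather than an asymptotic $O(v_{n})$. The integral approach, on the other hand, extends more transparently to variants where the sequences $(u_{n})$ and $(v_{n})$ are only asymptotically regularly varying, and it is closer in spirit to the manipulations used elsewhere in the paper (e.g.\ in the proof of Lemma~\ref{lempelunif}).
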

Finally, we recall the following results, which enables us to upper bound the $L^{p}$ moments of a sum of random variables in normed vector spaces.
\begin{lem}[\cite{godichon2015}]
\label{lemsum} Let $Y_{1},...,Y_{n}$ be random variables taking values in a normed vector space such that for all positive constant $q$ and for all $k \geq 1$, $\mathbb{E}\left[ \left\| Y_{k} \right\|^{q} \right] < \infty$. Thus, for all constants $a_{1},...,a_{n}$ and for all integer $p$,
\begin{equation}
\mathbb{E}\left[ \left\| \sum_{k=1}^{n} a_{k}Y_{k} \right\|^{p} \right] \leq \left( \sum_{k=1}^{n} \left| a_{k} \right| \left( \mathbb{E}\left[ \left\| Y_{k} \right\|^{p} \right] \right)^{\frac{1}{p}} \right)^{p}.
\end{equation}
\end{lem}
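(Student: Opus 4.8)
The plan is to recognize this statement as Minkowski's inequality for the Bochner $L^{p}$ norm, obtained by reducing to the classical scalar Minkowski inequality via the triangle inequality of the ambient norm. First I would introduce, for any random variable $Z$ taking values in the normed vector space, the quantity $\left\| Z \right\|_{L^{p}} := \left( \mathbb{E}\left[ \left\| Z \right\|^{p} \right] \right)^{1/p}$, which is finite for each $Y_{k}$ and each $p$ thanks to the hypothesis that all moments $\mathbb{E}\left[ \left\| Y_{k} \right\|^{q} \right]$ are finite.

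The first step is purely pointwise. By the triangle inequality and the homogeneity of the norm, for almost every $\omega$,
\[
\left\| \sum_{k=1}^{n} a_{k} Y_{k} \right\| \leq \sum_{k=1}^{n} \left| a_{k} \right| \left\| Y_{k} \right\| .
\]
Since $t \mapsto t^{p}$ is nondecreasing on $[0,\infty)$, raising to the power $p$ and taking expectations gives
\[
\mathbb{E}\left[ \left\| \sum_{k=1}^{n} a_{k} Y_{k} \right\|^{p} \right] \leq \mathbb{E}\left[ \left( \sum_{k=1}^{n} \left| a_{k} \right| \left\| Y_{k} \right\| \right)^{p} \right] .
\]
This reduces the vector-valued statement to a statement about the real-valued, nonnegative random variables $\left| a_{k} \right| \left\| Y_{k} \right\|$.

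The second step applies the classical Minkowski inequality (the triangle inequality for the scalar $L^{p}$ norm) to the sum of these nonnegative random variables, which yields
\[
\left( \mathbb{E}\left[ \left( \sum_{k=1}^{n} \left| a_{k} \right| \left\| Y_{k} \right\| \right)^{p} \right] \right)^{1/p} \leq \sum_{k=1}^{n} \left| a_{k} \right| \left( \mathbb{E}\left[ \left\| Y_{k} \right\|^{p} \right] \right)^{1/p} .
\]
Raising this to the power $p$ and chaining with the inequality from the first step gives exactly the claimed bound.

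The only care needed concerns integrability: the assumption $\mathbb{E}\left[ \left\| Y_{k} \right\|^{q} \right] < \infty$ for all $q$ guarantees every $L^{p}$ norm appearing above is finite, so the scalar Minkowski inequality applies and no term is infinite; for $p=1$ the statement degenerates to the pointwise triangle inequality followed by monotonicity of the expectation. Since this lemma is a standard fact cited from \cite{godichon2015}, I expect essentially no obstacle: the only genuine content is the passage from a general normed space to the scalar Minkowski inequality through the pointwise triangle bound.
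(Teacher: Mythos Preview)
Your proof is correct and is exactly the standard argument: the pointwise triangle inequality reduces the vector-valued statement to scalar nonnegative random variables, after which the classical Minkowski inequality finishes it. The paper does not actually give a proof of this lemma; it merely quotes it from \cite{godichon2015} as a technical fact, so there is nothing further to compare.
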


\end{appendix}

\def\cprime{$'$}

\end{document}